\newtheorem{theorem}{Theorem}[section]
\newtheorem{lemma}[theorem]{Lemma}
\newtheorem{corollary}[theorem]{Corollary}
\newtheorem{conjecture}[theorem]{Conjecture}
\newtheorem{claim}{Claim}[section]
\begin{document}
\textwidth 150mm \textheight 225mm
\title{On a conjecture of Nikiforov involving a spectral radius condition for a graph to contain all trees
\thanks{Supported by the National Natural Science Foundation of China (No. 11871398)
and China Scholarship Council (No. 202006290071).}
}
\author{{Xiangxiang Liu$^{1,2}$, Hajo Broersma$^{2,}$\thanks{Corresponding author.}, Ligong Wang$^{1}$}\\
{\small $^{1}$ School of Mathematics and Statistics,}\\ {\small Northwestern Polytechnical University, Xi'an, Shaanxi 710129, PR China}\\
{\small $^{2}$ Faculty of Electrical Engineering, Mathematics and Computer Science,}\\ {\small University of Twente, P.O. Box 217, 7500 AE Enschede, The Netherlands}\\
{\small E-mail: xxliumath@163.com; h.j.broersma@utwente.nl; lgwangmath@163.com}}
\date{}
\maketitle
\begin{center}
\begin{minipage}{120mm}
\vskip 0.3cm
\begin{center}
{\small {\bf Abstract}}
\end{center}
{\small
We partly confirm a Brualdi-Solheid-Tur\'{a}n type conjecture due to Nikiforov, which is a spectral radius analogue of the well-known Erd\H{o}s-S\'os Conjecture that any tree of order $t$ is contained in a graph of average degree greater than $t-2$. We confirm Nikiforov's Conjecture for all brooms and for a larger class of spiders. For our proofs we also obtain a new Tur\'{a}n type result which might turn out to be of independent interest.
}
{\small
\vskip 0.1in \noindent {\bf Keywords}: Brualdi-Solheid-Tur\'{a}n type problem, spectral radius, spider, broom \vskip
0.1in \noindent {\bf AMS Subject Classification (2020)}: \ 05C50, 05C35
}
\end{minipage}
\end{center}

\section{Introduction}
\label{sec:spiders-intro}
A central problem in extremal graph theory is the following Tur\'{a}n-type problem: for a given graph $H$, what is the maximum number of edges in an $H$-free graph with a given order? In the past decades, much attention has been paid to a spectral version of this question, that is, what is the maximum spectral radius of an $H$-free graph with a given order? The latter type of problem is called a Brualdi-Solheid-Tur\'{a}n type problem in \cite{Nikiforov} by Nikiforov. Examples of such problems are numerous since every Tur\'{a}n type problem gives rise to a corresponding Brualdi-Solheid-Tur\'{a}n type problem. As argued in \cite{Nikiforov}, ``the study of Brualdi-Solheid-Tur\'an type problems is an important topic in spectral graph theory''. Several groups of researchers have studied the relationship between the spectral radius and forbidden subgraphs (such as cliques, paths, cycles and complete bipartite subgraphs). We refer to \cite{Babai,GaoHou,Nikiforov1,Nikiforov2,Nikiforov,Wilf,Zhailin,ZhaiWang} for more information.

Motivated by these problems and earlier works, we study a conjecture due to Nikiforov \cite{Nikiforov}, which is a spectral radius analogue of the well-known Erd\H{o}s-S\'{o}s Conjecture that a graph of average degree greater than $t-2$ admits any tree of order $t$. Before we give more details concerning our work, we start by giving some essential definitions and introducing some useful notation.

Let $G=(V(G),E(G))$ be a simple undirected graph with vertex set $V(G)$ and edge set $E(G)$. We use $|G|\colonequals|V(G)|$ and $e(G)\colonequals|E(G)|$ to denote the order and size of $G$, respectively. Let $\mu(G)$ be the largest eigenvalue of the adjacency matrix $A(G)$ of $G$. We call $\mu(G)$ the \emph{spectral radius} of $G$. A graph $G$ is said to be \emph{$H$-free} if $H$ is not a subgraph of $G$. In order to avoid confusion, please note that we mean subgraph here and not induced subgraph. The \emph{Tur\'{a}n number} of $H$ is the maximum number of edges in an $H$-free graph of order $n$, and denoted by ex$(n,H)$. Given two disjoint graphs $G$ and $H$, the \emph{disjoint union} of $G$ and $H$, denoted by $G\cup H$, is the graph with vertex set $V(G)\cup V(H)$ and edge set $E(G)\cup E(H)$. We use $mG$ to denote the disjoint union of $m$ copies of $G$. The \emph{join} of $G$ and $H$, denoted by $G\vee H$, is the graph obtained from $G\cup H$ by adding edges joining every vertex of $G$ to every vertex of $H$.

Adopting the notation of \cite{Nikiforov}, let $S_{n,k}$ denote the graph obtained by joining every vertex of a complete graph $K_{k}$ to every vertex of an independent set of order $n-k$, that is, $S_{n,k}=K_{k}\vee\overline{K_{n-k}}$. Let $S_{n,k}^{+}$ be the graph obtained from $S_{n,k}$ by adding a single edge joining two vertices of the independent set of $S_{n,k}$. In addition, from \cite{Nikiforov} we know
\begin{equation}\label{eq:spectral radius1}
\mu(S_{n,k})=\frac{k-1}{2}+\sqrt{kn-\frac{3k^{2}+2k-1}{4}},
\end{equation}
and
\begin{equation}\label{eq:spectral radius2}
\mu(S_{n,k})<\mu(S_{n,k}^{+})<\mu(S_{n,k})+\frac{1}{n-k-2\sqrt{(n-k)/k}}.
\end{equation}

Based on the Erd\H{o}s-S\'{o}s Conjecture, in 2010 Nikiforov proposed the following Brualdi-Solheid-Tur\'{a}n type conjecture concerning trees.

\begin{conjecture}\label{conj:trees1}{\normalfont (\cite{Nikiforov})} Let $k\geq 2$ and let $G$ be a graph of sufficiently large order $n$.
\begin{itemize}
\item[{\rm (a)}] If $\mu(G)\geq \mu(S_{n,k})$, then $G$ contains all trees of order $2k+2$, unless $G=S_{n,k}$.
\item[{\rm (b)}] If $\mu(G)\geq \mu(S_{n,k}^{+})$, then $G$ contains all trees of order $2k+3$, unless $G=S_{n,k}^{+}$.
\end{itemize}
\end{conjecture}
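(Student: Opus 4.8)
\section*{Proof strategy for Conjecture~\ref{conj:trees1}}

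The plan is to prove both parts in contrapositive form, as a spectral Erd\H{o}s--S\'os statement: if $T$ is a tree of order $2k+2$ (resp.\ $2k+3$) and $G$ is a $T$-free graph of large order $n$ with $G\neq S_{n,k}$ (resp.\ $G\neq S_{n,k}^{+}$), then $\mu(G)<\mu(S_{n,k})$ (resp.\ $\mu(G)<\mu(S_{n,k}^{+})$). Since the spectral radius of a graph is attained on one of its components, we may assume $G$ is connected, and we fix a positive Perron eigenvector $x$ scaled so that $x_u=\max_v x_v=1$. Writing $\mu=\mu(G)$ and expanding $\mu^{2}=\mu^{2}x_u=(A(G)^{2}x)_u$ yields the familiar two-step bound $\mu^{2}\le\sum_{v\sim u}d(v)$; iterating such local estimates around the vertices of near-maximal eigenvector weight concentrates the mass of $x$ on a bounded number of high-degree ``hubs'' together with the vertex sets they dominate. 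Using the explicit value of $\mu(S_{n,k})$ in \eqref{eq:spectral radius1} --- which grows like $\sqrt{kn}$ --- the first aim is to force one of two alternatives: \emph{(i)} $G$ has a subgraph $H$ of average degree strictly greater than $2k$ (resp.\ $2k+1$); or \emph{(ii)} $G$ is within $o(n)$ edges of $S_{n,k}$, meaning that all but $o(n)$ vertices are joined to a fixed $k$-clique $Q$ and only $o(n)$ edges have both endpoints outside $Q$.

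In alternative \emph{(i)} the proof ends immediately: by the Erd\H{o}s--S\'os Conjecture a graph of average degree greater than $2k$ (resp.\ $2k+1$) contains every tree of order $2k+2$ (resp.\ $2k+3$), so $T\subseteq H\subseteq G$, contrary to assumption. Alternative \emph{(ii)}, the near-extremal regime, is where the work lies, and I would handle it in two steps. First a \emph{stability step}: show that a $T$-free graph whose spectral radius is within $o(1)$ of $\mu(S_{n,k})$ can be transformed into $S_{n,k}$ by deleting $o(n)$ edges and adding $o(n)$ edges. If this failed, a linear-sized set of vertices would violate the $S_{n,k}$-pattern, and inside that set one finds a short path or a small dense piece which, attached to the $k$ near-universal vertices of $Q$, yields a copy of $T$ by greedy embedding --- this uses only $|T|=2k+2$ (resp.\ $2k+3$) and the presence of $k$ high-degree hubs together with a couple of extra edges. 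Then an \emph{exact perturbation step}: starting from the stable structure, show by an eigenvector-perturbation estimate of exactly the kind behind \eqref{eq:spectral radius2} that every edge inside the independent part, and every missing edge of the join, either creates a copy of $T$ or strictly decreases $\mu$; since $\mu(G)\ge\mu(S_{n,k})$ (resp.\ $\mu(G)\ge\mu(S_{n,k}^{+})$), this forces $G=S_{n,k}$ (resp.\ $G=S_{n,k}^{+}$), the excluded case, and the contrapositive is complete.

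I expect the real obstacle to be the density dichotomy \emph{(i)}/\emph{(ii)} and the embedding inside the stability step \emph{for an arbitrary tree} $T$. The clean implication ``average degree greater than $2k$ forces every tree of order $2k+2$'' is precisely the Erd\H{o}s--S\'os Conjecture, which is open in general, and the near-extremal analysis equally needs a tree-specific embedding lemma guaranteeing that any deviation from the $S_{n,k}$-structure creates a copy of $T$. To make unconditional progress one would replace the appeal to Erd\H{o}s--S\'os by a self-contained Tur\'an-type bound of the form $\mathrm{ex}(n,T)\le kn-c$ with the correct additive constant $c$, and feed this bound into both the dichotomy and the stability step; proving such a bound --- a new Tur\'an-type result --- is the ingredient I expect to demand the most new ideas, and it is what governs the range of trees the method can currently reach.
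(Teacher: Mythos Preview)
The statement you are attempting is a \emph{conjecture}; the paper does not prove it in full. What the paper establishes are partial cases: Conjecture~\ref{conj:trees1} for all brooms (Theorem~\ref{th:brooms}) and part~(b) for spiders with $r\ge 3$ odd legs and $s$ legs of length~$1$ satisfying $2s-r\ge 2$ (Theorem~\ref{th:spider3}). So there is no ``paper's own proof'' of the full statement to compare against.

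Your strategy has a genuine gap that you already name: alternative~(i) invokes the Erd\H{o}s--S\'os Conjecture itself to pass from average degree $>2k$ (resp.\ $>2k+1$) to $T\subseteq G$. Since Erd\H{o}s--S\'os is open for general trees, this is a reduction, not a proof. The stability step has the same defect for arbitrary $T$: the claim that ``any deviation from the $S_{n,k}$-structure creates a copy of $T$'' is a tree-specific embedding statement of comparable difficulty, and your sketch does not supply it. So as written, the proposal reduces Conjecture~\ref{conj:trees1} to Erd\H{o}s--S\'os plus a near-extremal companion, rather than attacking it directly.

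For the partial cases the paper does settle, the method is quite different from your stability-plus-perturbation outline. There is no Perron-vector analysis, no structural stability, and no edge-by-edge perturbation. Instead, the paper applies Lemma~\ref{le:trees1} to $B=A(G)^{2}-(k-1)A(G)-k(n-k)I$: since $\mu(S_{n,k})$ is the largest root of $x^{2}-(k-1)x-k(n-k)$, the hypothesis $\mu(G)\ge\mu(S_{n,k})$ forces some column sum $B_{u}\ge 0$, and by~\eqref{eq:trees1} this lower-bounds $\sum_{x\in N^{1}(u)}d_{L_{u}}(x)$. A case split on $d_{G}(u)$ then shows that either $L_{u}$ or $G[N^{1}(u)]$ has enough edges (inequalities~\eqref{eq:2.1} and~\eqref{eq:2.2}) to contain, via the known Tur\'an numbers for linear forests (Lemmas~\ref{le:Pt}--\ref{le:linearforest1}), the disjoint paths $\bigcup_{s+1\le i\le m}P_{t_{i}}$ forming the long legs of the spider; attaching these to $u$ and $s$ further neighbors yields $S$. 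The one broom not covered by this, $B_{2,2k+1}$, is handled separately by a bespoke connected Tur\'an bound (Theorem~\ref{th:B22K1}) fed through the spectral reductions of Lemmas~\ref{le:sepctral-degree} and~\ref{le:sepctral-subgraph}, together with the \emph{proved} three-leg case of Erd\H{o}s--S\'os (Lemma~\ref{le:spiders}). In short, the paper replaces your global stability programme by a local ``good vertex'' argument in which the embedding reduces to established linear-forest extremal results; this is exactly why the conditions $r\ge 3$ and $2s-r\ge 2$ appear, and why the method does not extend to arbitrary trees.
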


In \cite{Nikiforov}, Nikiforov proved that Conjecture~\ref{conj:trees1} holds for paths.

\begin{theorem}\label{th:path}{\normalfont (\cite{Nikiforov})}
Let $k\geq 2$ and let $G$ be a graph of sufficiently large order $n$.
\begin{itemize}
\item[{\rm (a)}] If $\mu(G)\geq \mu(S_{n,k})$, then $G$ contains a $P_{2k+2}$, unless $G=S_{n,k}$.
\item[{\rm (b)}] If $\mu(G)\geq \mu(S_{n,k}^{+})$, then $G$ contains a $P_{2k+3}$, unless $G=S_{n,k}^{+}$.
\end{itemize}
\end{theorem}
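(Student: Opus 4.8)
\medskip
\noindent\textbf{Sketch of a proof of Theorem~\ref{th:path}.}

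We outline part~(a); part~(b) follows the same template with $(P_{2k+2},S_{n,k})$ replaced by $(P_{2k+3},S_{n,k}^{+})$, using the estimate \eqref{eq:spectral radius2} in place of \eqref{eq:spectral radius1}. Suppose $n$ is large, $G$ is $P_{2k+2}$-free, $\mu(G)\ge\mu(S_{n,k})$, and $G\ne S_{n,k}$; we aim for a contradiction. First we may assume $G$ is connected: otherwise the component realizing $\mu(G)$ has fewer than $n$ vertices but spectral radius at least $\mu(S_{n,k})>\mu(S_{m,k})$ for all $m<n$ (by \eqref{eq:spectral radius1}, $\mu(S_{m,k})$ is strictly increasing in $m$), so applying the connected case to that component already produces a $P_{2k+2}$. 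Fix a Perron eigenvector $\mathbf{x}=(x_v)_{v\in V(G)}$ of $A(G)$ scaled so that $x_u=\max_v x_v=1$ for some vertex $u$. Since $\mu(S_{n,k})$ is the positive root of $t^{2}-(k-1)t-k(n-k)=0$ and $\mu:=\mu(G)\ge\mu(S_{n,k})$, it suffices to prove
\[
\mu^{2}\;\le\;(k-1)\mu+k(n-k),
\]
with equality forcing $G=S_{n,k}$.

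The first ingredient is the two-step walk identity at $u$, coming from $A(G)^{2}\mathbf{x}=\mu^{2}\mathbf{x}$: since $(A(G)^{2})_{uu}=d(u)$ and $(A(G)^{2})_{uw}=|N(u)\cap N(w)|$ for $w\ne u$,
\[
\mu^{2}\;=\;d(u)+\sum_{w\in N(u)}d_{N(u)}(w)\,x_w+\!\!\sum_{w\notin N(u)\cup\{u\}}\!\!\bigl|N(u)\cap N(w)\bigr|\,x_w ,
\]
where $d_{N(u)}(w)$ is the number of neighbours of $w$ inside $N(u)$. The second ingredient is a structural analysis of $P_{2k+2}$-free graphs performed \emph{locally at} $u$: since $u$ is complete to $N(u)$, the graph $G[N(u)]$ contains no $P_{2k+1}$ (prepending $u$ would give a $P_{2k+2}$), and a longest path of $G[N(u)]$ on $2k$ vertices cannot have an endpoint with a neighbour off the path (rerouting through $u$ gives a $P_{2k+2}$). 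Combining this with the Erd\H{o}s-Gallai theorem and P\'osa-type rotations applied to longest paths of $G[N(u)]$, one extracts a set $K$ with $u\in K$, $|K|\le k$, such that $G-K$ has maximum degree bounded in terms of $k$ only; consequently the heavy eigenvector weight concentrates on $K$, every $w\notin K$ satisfies $|N(u)\cap N(w)|\le k-1$, and feeding the degree bound on $G-K$ into the Perron equation $\mu x_w=\sum_{z\sim w}x_z$ gives $\sum_{w\notin K}x_w\le(1+o(1))k(n-k)/\mu=(1+o(1))\mu$ and $\sum_{v\in V(G)}x_v\le(1+o(1))\mu$.

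With this structure in hand one bounds the three terms of the identity. The last sum ranges over vertices at distance $2$ from $u$, whose total weight equals $\sum_v x_v-1-\mu=o(\mu)$, so this term is $o(\mu)$. In the middle sum, at most $k-1$ vertices lie in $K$, each contributing at most $(n-2)\cdot 1$, while the vertices of $N(u)\setminus K$ have bounded $d_{N(u)}(\cdot)$ and total weight at most $\mu$, contributing $O(\mu)$; together with $d(u)\le n-1$ this gives $\mu^{2}\le(k-1)(n-2)+(n-1)+O(\mu)$. Tightening the $O(\mu)$ terms is the delicate point — the crude constants are just barely too large, so one must use that the weights $x_w$ for $w\notin K$ are genuinely small and that the codegree bound $|N(u)\cap N(w)|\le k-1$ holds with room to spare unless $G$ is already very close to $S_{n,k}$ — and this yields $\mu^{2}\le(k-1)\mu+k(n-k)$, i.e.\ $\mu(G)\le\mu(S_{n,k})$. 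Hence $\mu(G)=\mu(S_{n,k})$, and tracing equality back through each inequality forces $d(u)=n-1$, $|K|=k$, every vertex of $K$ adjacent to all others, and $V(G)\setminus K$ an independent set with common neighbourhood exactly $K$; that is, $G=K_{k}\vee\overline{K_{n-k}}=S_{n,k}$, the desired contradiction.

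For part~(b) the only structural difference is that $S_{n,k}^{+}$ has a longest path on $2k+2$ vertices, one more than $S_{n,k}$, so the local analysis at $u$ now permits exactly one extra edge near $K$ and the bound on $\mu^{2}$ acquires a correction that is absorbed by the explicit gap in \eqref{eq:spectral radius2} between $\mu(S_{n,k})$ and $\mu(S_{n,k}^{+})$. The main obstacle, in both parts, is the combination of (i) the precise local structure of near-extremal $P_{2k+2}$-free graphs around a maximum-weight vertex, where the P\'osa-rotation argument must be pushed far enough to rule out every competitor of $S_{n,k}$, and (ii) controlling the lower-order terms in the spectral estimate tightly enough that equality pins the graph down exactly. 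Soft spectral bounds do not suffice: Hong's inequality $\mu(G)\le\sqrt{2e(G)-n+1}$ fed with the Erd\H{o}s-Gallai bound $e(G)\le kn$ only gives $\mu(G)\le\sqrt{(2k-1)n+1}$, which already exceeds $\mu(S_{n,k})$ for every $k\ge 2$.
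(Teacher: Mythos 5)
Theorem~\ref{th:path} is not proved in this paper at all: it is Nikiforov's theorem, imported verbatim from \cite{Nikiforov} and used as a black box. So the only basis for comparison is Nikiforov's own argument, whose flavour is faithfully reproduced in Section~\ref{sec:proof spider3} of the present paper: one sets $f(x)=x^{2}-(k-1)x-k(n-k)$, applies Lemma~\ref{le:trees1} to $B=f(A(G))$ to find a vertex $u$ with \emph{column sum} $B_u\ge 0$, rewrites $B_u$ via equation~(\ref{eq:trees1}) in terms of $\sum_{x\in N^1(u)}d_{L_u}(x)$, and then uses Erd\H{o}s--Gallai type Tur\'an bounds on $L_u$ and $G[N^1(u)]$. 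The crucial feature of Lemma~\ref{le:trees1} is that it replaces the Perron eigenvector by the all-ones vector: the relevant quantity is an unweighted count of two-walks from $u$, which turns directly into edge counts in the auxiliary graph $L_u$ and is amenable to Tur\'an-number input. Your sketch instead works with the Perron vector $\mathbf{x}$ normalized at $u$, producing the weighted identity $\mu^2=d(u)+\sum_{w\in N(u)}d_{N(u)}(w)x_w+\sum_{w\notin N(u)\cup\{u\}}|N(u)\cap N(w)|x_w$. This is a genuinely different framework, and it is the harder one: you now have to control the eigenvector entries $x_w$ in addition to the codegrees, and you cannot simply plug in ${\rm ex}(n,P_t)$ as the paper does.

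That extra burden is exactly where your sketch leaves two real gaps. First, the extraction of a set $K\ni u$ with $|K|\le k$ such that $\Delta(G-K)=O_k(1)$ is asserted, not proved. It is not a consequence of $P_{2k+2}$-freeness alone, and the hypothesis $\mu(G)\ge\mu(S_{n,k})$ only yields $e(G)\gtrsim kn/2$, which is far from the Tur\'an threshold $kn-O(k^2)$ needed to invoke stability-type structure theorems such as Balister--Gy\H{o}ri--Lehel--Schelp globally; the published proofs circumvent this precisely by localizing at $u$ and never claiming global structure. Second, you explicitly concede that ``the crude constants are just barely too large'' and that closing the gap requires showing the weights $x_w$ for $w\notin K$ are small \emph{and} that the codegree bound has slack unless $G$ is near $S_{n,k}$ --- but no mechanism for either is given, and both are nontrivial (indeed, they are essentially the content of the theorem). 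An argument that concludes with ``and this yields $\mu^2\le(k-1)\mu+k(n-k)$'' without supplying that last step is not a proof, and the step in question is where the theorem actually lives. The column-sum formulation of Lemma~\ref{le:trees1} is what makes the bookkeeping close in the paper's Section~\ref{sec:proof spider3}; I would recommend abandoning the eigenvector-weighted identity and adopting that lemma instead.
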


Recently, Hou et al. \cite{HouLiu} proved that Conjecture~\ref{conj:trees1} (a) holds for all trees of diameter at most four. Liu, Broersma and Wang \cite{LiuBW} proved that Conjecture~\ref{conj:trees1} (b) holds for all trees of diameter at most four, except for the subdivision of $K_{1, k+1}$ in which every edge is subdivided precisely once.

In this paper, we study Conjecture~\ref{conj:trees1} for spiders. A \emph{spider} is a tree with at most one vertex of degree more than 2. The vertex of degree more than 2 is called the \emph{center} of the spider (if all vertices have degree 1 or 2, so if the spider is a path, then any designated vertex can act as its center). A \emph{leg} of a spider is a path from the center to a vertex of degree 1. The \emph{length} of a leg is the number of edges of the leg. We use $S(t_1, t_2, \ldots, t_m)$ to denote a spider consisting of one designated center and $m$ legs with lengths $t_1, t_2, \ldots, t_m$; see Figure~\ref{fig:spider} for an example. Thus $S(t_1, t_2, \ldots, t_m)$ has $1+\sum_{i=1}^{m} t_i$ vertices and $\sum_{i=1}^{m} t_i$ edges.

\begin{figure}[htbp]
\begin{center}
\begin{tikzpicture}[scale=0.06,auto,swap]
\tikzstyle{vertex}=[circle,draw=black,fill=black]
\node[vertex,scale=0.5] (v0) at (0,0) {}; \draw (0,6) node {$v_0$};
\node[vertex,scale=0.5] (v11) at (-30,-15) {}; \draw (-38,-15) node {$v_{1,1}$};
\node[vertex,scale=0.5] (v21) at (-10,-15) {}; \draw (-18,-15) node {$v_{2,1}$};
\node[vertex,scale=0.5] (v31) at (10,-15) {}; \draw (2,-15) node {$v_{3,1}$};
\node[vertex,scale=0.5] (v41) at (30,-15) {}; \draw (22,-15) node {$v_{4,1}$};
\node[vertex,scale=0.5] (v12) at (-30,-30) {}; \draw (-38,-30) node {$v_{1,2}$};
\node[vertex,scale=0.5] (v22) at (-10,-30) {}; \draw (-18,-30) node {$v_{2,2}$};
\node[vertex,scale=0.5] (v32) at (10,-30) {}; \draw (2,-30) node {$v_{3,2}$};
\node[vertex,scale=0.5] (v13) at (-30,-45) {}; \draw (-38,-45) node {$v_{1,3}$};
\node[vertex,scale=0.5] (v23) at (-10,-45) {}; \draw (-18,-45) node {$v_{2,3}$};

\draw[thick] (v0)--(v11); \draw[thick] (v0)--(v21); \draw[thick] (v0)--(v31); \draw[thick] (v0)--(v41);
\draw[thick] (v11)--(v12); \draw[thick] (v21)--(v22); \draw[thick] (v31)--(v32); \draw[thick] (v12)--(v13); \draw[thick] (v22)--(v23);
\end{tikzpicture}
\caption{The spider $S(3,3,2,1)$.}\label{fig:spider}
\end{center}
\vspace{-0.5cm}
\end{figure}
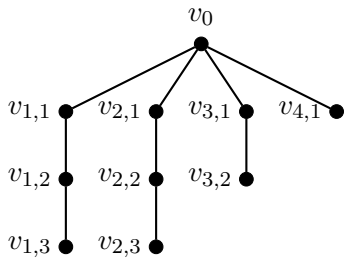

The aforementioned Erd\H{o}s-S\'{o}s Conjecture has been confirmed for several classes of spiders in a series of papers; see \cite{Fan,FanHuo,FanSun}. Our first contribution is the following spectral radius result on the existence of a class of spiders. By imposing two restrictions on the number of odd legs, i.e., of odd length, we can prove the following theorem involving spiders.

\begin{theorem}\label{th:spider3} Let $k\geq 2$ and let $S$ be a spider of order $2k+3$ with $r$ odd legs and $s$ legs of length 1. If $r\geq 3$, $2s-r\geq 2$ and $n$ is sufficiently large, then every graph $G$ of order $n$ with $\mu(G)\geq \mu(S_{n,k})$ contains $S$ as a subgraph.
\end{theorem}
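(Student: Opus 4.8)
We argue by contradiction: assume $G$ has (sufficiently large) order $n$, $\mu:=\mu(G)\ge\mu(S_{n,k})$, and $S$ is not a subgraph of $G$. Two preliminary observations frame the proof. First, $\sum_{i=1}^{m}t_i=2k+2$ is even, so the number $r$ of odd legs is even, and hence the hypothesis $r\ge 3$ in fact means $r\ge 4$. Second, a greedy embedding shows $S\subseteq S_{n,k}$: place the center of $S$ on a vertex of the clique $K_k$, and route each leg so that, starting from the center, its vertices alternate independent set, clique, independent set, $\dots$; a leg of length $t_i$ then consumes $\lfloor t_i/2\rfloor$ clique vertices, so altogether $1+\sum_i\lfloor t_i/2\rfloor=k+2-\tfrac r2\le k$ clique vertices suffice. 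This is why the statement carries no exceptional graph; in particular $G\ne S_{n,k}$, so Theorem~\ref{th:path} already gives $P_{2k+2}\subseteq G$. Let $\mathbf{x}$ be the Perron eigenvector of $G$ normalized so that $x_{v_1}=\max_v x_v=1$. From $\mu=\mu x_{v_1}=\sum_{v\sim v_1}x_v\le d(v_1)$ and \eqref{eq:spectral radius1} we get $d(v_1)\ge(1-o(1))\sqrt{kn}$, and squaring the eigenvalue equation at $v_1$ yields the identity
\begin{equation*}
\mu^{2}=d(v_1)+\sum_{w\ne v_1}|N(v_1)\cap N(w)|\,x_w ,
\end{equation*}
which we will estimate by splitting the sum according to the distance of $w$ from $v_1$ and the size of $x_w$.

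The engine of the proof is the auxiliary Tur\'an-type result announced in the abstract. Under the hypotheses $r\ge 4$ and $2s-r\ge 2$ it should assert that every $S$-free graph on $N$ vertices has at most $\tfrac{(2k+1)N}{2}$ edges (the Erd\H{o}s--S\'os bound for trees of order $2k+3$), and, more importantly for us, that an $S$-free graph with many edges must be structurally simple: there is a set $R$ of at most $k-1$ vertices with $\Delta(G-R)$ bounded by a constant $D=D(k)$ (the extremal shapes being subgraphs of $S_{N,k-1}$ and of $K_{k-1}\vee(\text{bounded-degree graph})$). The two numerical hypotheses enter exactly here: the parity count above shows $S$ embeds in $S_{N,k}$ but, when $r=4$, in neither $S_{N,k-1}$ nor $S_{N,k-1}^{+}$, which is what forces $|R|\le k-1$; and the surplus of the $s$ pendant legs over the $r-s$ long odd legs, at least $2$ by $2s-r\ge 2$, is precisely the slack needed to thread $S$ into an $S$-free host that is dense but has no clean $K_{k-1}$-core — the pendant legs are all absorbed at a single high-degree vertex while each long odd leg is pushed through one short block at a time, the surplus paying for the occasionally wasted step. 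I would prove this by the usual extremal/induction scheme for Erd\H{o}s--S\'os-type statements: in a densest $S$-free graph pass to a vertex of maximum degree or to a longest path, delete a bounded subgraph, and complete the embedding using minimality together with the pendant-leg surplus.

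Granting this, I would conclude as follows. A standard cleaning step deletes vertices of degree below a threshold $\theta=\theta(k)$; such vertices have Perron weight $O(\theta/\mu)$, so each deletion costs only $o(1)$ in spectral radius, and one is left with a subgraph $G'$ with $\delta(G')\ge\theta$ and $\mu(G')\ge\mu(S_{n,k})-o(1)$. If $G'$ contains a subgraph of minimum degree at least $|S|-1=2k+2$, the greedy tree-embedding lemma embeds $S$ (indeed every tree of order $2k+3$) into it, a contradiction. Otherwise $G'$ is essentially $(2k+1)$-degenerate, so $e(G')=O(kn)$; combined with $P_{2k+2}\subseteq G'$ and the structural half of the Tur\'an result this produces a set $R$, $|R|\le k-1$, with $\Delta(G'-R)\le D$, which transfers back to $G$ up to $O_k(1)$ vertices. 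Then every vertex outside $R$ has Perron weight $O(1/\sqrt n)$, and substituting this into the displayed identity — bounding the at most $k-1$ ``heavy'' codegree terms by $n-1$ each and the ``light'' part by $O(\sqrt n)$ — gives $\mu^{2}\le(k-1)n+O(\sqrt n)$, where we also use that $v_1\in R$ so that $d(v_1)$ absorbs one of the heavy terms. Since $\mu(S_{n,k})^{2}=kn+(k-1)\sqrt{kn}+O(1)$, this is the desired contradiction for $n$ large.

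I expect the main obstacle to be the Tur\'an-type result, and within it the structural (stability) half rather than the edge bound: proving that $S$-free graphs with many edges admit the small set $R$ with $\Delta(G-R)$ bounded is exactly where the recursive threading of the long odd legs — and hence the inequalities $r\ge 4$ and $2s-r\ge 2$ — is genuinely consumed. A secondary difficulty is the bookkeeping in the final spectral estimate: one must argue that the top Perron weight already sits in $R$ (so that $d(v_1)\le n-1$ absorbs one heavy codegree term and the bound comfortably drops to $(k-1)n+O(\sqrt n)$), or otherwise keep all the $O(\sqrt n)$ error terms with constants small enough to stay below the $(k-1)\sqrt{kn}$ gap in $\mu(S_{n,k})^2$.
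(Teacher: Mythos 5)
Your proposal diverges from the paper's actual proof in an important structural way, and the divergence is exactly where the gap lies. The paper does not proceed via a Tur\'an bound or a stability theorem for $S$-free graphs. Instead it applies the matrix lemma of Gao--Hou (Lemma~\ref{le:trees1}) with $f(x)=x^2-(k-1)x-k(n-k)$ to find a single vertex $u$ with $B_u\ge 0$, deduces $d_G(u)\ge k+1$, and then works entirely locally: in the small-degree regime ($d_G(u)\le\log n$) it extracts a dense $K_{k,\text{many}}$ inside $N^1(u)\cup N^2(u)$; in the large-degree regime it shows either the bipartite-ish graph $L_u$ on $N^1(u)\cup N^2(u)$ or the induced graph $G[N^1(u)]$ has enough edges to contain, by the known Tur\'an numbers of \emph{linear forests} (Lemmas~\ref{le:Pt}, \ref{le:kP3}, \ref{le:linearforest1}), the disjoint union of paths $\bigcup_{s+1\le i\le m}P_{t_i}$ that, attached to $u$ and $s$ pendant neighbours, rebuilds $S$. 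The two hypotheses $r\ge 3$ and $2s-r\ge 2$ are consumed precisely in the arithmetic that makes these linear-forest Tur\'an bounds fall below $(k-1)N$ and $\tfrac{2k-3}{2}N$ respectively.

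The gap in your plan is the ``engine'' itself. You propose to prove (i) that every $S$-free graph on $N$ vertices has at most $\tfrac{(2k+1)N}{2}$ edges, i.e.\ the Erd\H{o}s--S\'os bound for $S$, and (ii) a stability refinement producing a set $R$ of at most $k-1$ vertices with $\Delta(G-R)$ bounded. Neither is available. The Erd\H{o}s--S\'os conjecture is open for general spiders; it is proved only for spiders with three legs (Lemma~\ref{le:spiders}), four legs, or of large size, and the class covered by Theorem~\ref{th:spider3} (at least four odd legs, possibly many legs, order only $2k+3$) is not among them, so (i) is an unproved conjecture, not a lemma. Item (ii) is a substantially stronger stability statement which, even granting (i), is not ``the usual extremal/induction scheme'' -- you yourself flag it as the main obstacle, and rightly so: it is essentially a new theorem that the paper deliberately avoids needing. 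A secondary misreading: the Tur\'an-type result the abstract advertises is Theorem~\ref{th:B22K1}, which concerns only the single broom $B_{2,2k+1}$ and is used in the proof of Theorem~\ref{th:brooms}, not in the proof of Theorem~\ref{th:spider3}. Your parity computation ($r$ even, so $r\ge 4$, and $S\subseteq S_{n,k}$) is correct and does match a step in the paper's Claim~3.1, but the rest of the argument would need the missing structural theorem to go through, and the final spectral bookkeeping (why $v_1\in R$, why the light codegree sum is only $O(\sqrt n)$) is also left sketchy. The paper's local approach via Lemma~\ref{le:trees1} sidesteps all of this.
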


We postpone all proofs to later sections. Theorem~\ref{th:spider3} confirms Conjecture~\ref{conj:trees1} (b) for all spiders satisfying the condition in the statement of the theorem. Since $\mu(S_{n,k})<\mu(S^+_{n,k})$, Theorem~\ref{th:spider3} is in fact a stronger result. Let $S$ be a spider of order $2k+2$ with $r$ odd legs and $s$ legs of length 1 such that $r\geq 2$ and $2s-r\geq 1$. Let $S'$ be the graph obtained from $S$ by adding an extra pendant edge at the center of $S$. Then $S'$ is a spider of order $2k+3$ with $r+1$ odd legs and $s+1$ legs of length 1 such that $r+1\geq 3$ and $2(s+1)-(r+1)\geq 2$. Applying Theorem~\ref{th:spider3}, we immediately derive the following result which confirms Conjecture~\ref{conj:trees1} (a) for a class of spiders.

\begin{corollary}\label{co:spider2} Let $k\geq 2$ and let $S$ be a spider of order $2k+2$ with $r$ odd legs and $s$ legs of length 1. If $r\geq 2$, $2s-r\geq 1$ and $n$ is sufficiently large, then every graph $G$ of order $n$ with $\mu(G)\geq \mu(S_{n,k})$ contains $S$ as a subgraph.
\end{corollary}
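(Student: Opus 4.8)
The plan is to derive Corollary~\ref{co:spider2} directly from Theorem~\ref{th:spider3} by a simple embedding trick, using the elementary fact that if a graph $G$ contains a supergraph of $S$, then $G$ contains $S$ itself. Given a spider $S$ of order $2k+2$ with $r$ odd legs and $s$ legs of length $1$ satisfying $r\geq 2$ and $2s-r\geq 1$, the first step is to form the auxiliary tree $S'$ obtained from $S$ by attaching one new pendant edge at the center of $S$.

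The second step is to check that $S'$ is again a spider and to record its parameters. Since $r\geq 2$, the spider $S$ has at least two legs, so its center has degree at least $2$; in fact $r\geq 2$ forces the center to have degree at least $3$, because a path (with any designated center) has at most one odd leg. Hence attaching a pendant edge leaves the center the unique vertex of degree exceeding $2$, so $S'$ is indeed a spider. Its legs are those of $S$ together with one extra leg of length $1$, so $S'$ has order $2k+3$, exactly $r+1$ odd legs, and exactly $s+1$ legs of length $1$. The hypotheses of Theorem~\ref{th:spider3} then become $r+1\geq 3$ and $2(s+1)-(r+1)=(2s-r)+1\geq 2$, both of which follow from $r\geq 2$ and $2s-r\geq 1$.

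The final step is to apply Theorem~\ref{th:spider3} to $S'$: writing its order as $2k'+3$ yields $k'=k$, so for $n$ sufficiently large every graph $G$ of order $n$ with $\mu(G)\geq\mu(S_{n,k})=\mu(S_{n,k'})$ contains $S'$ as a subgraph, and therefore contains $S$ as a subgraph, which is precisely the claimed conclusion.

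Because the corollary is an immediate consequence of the (substantially harder) Theorem~\ref{th:spider3}, there is no genuine obstacle in the argument; the only points that deserve a moment's attention are verifying that $S'$ genuinely remains a spider, so that Theorem~\ref{th:spider3} is applicable, and checking that the two numerical conditions on the number of odd legs and the number of legs of length $1$ transfer correctly under the addition of the pendant edge, along with the bookkeeping observation that the parameter $k$ is unchanged as the order increases from $2k+2$ to $2k+3$.
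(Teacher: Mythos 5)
Your proposal is correct and follows exactly the same route as the paper: form $S'$ by attaching a pendant edge at the center of $S$, observe that $S'$ is a spider of order $2k+3$ with $r+1$ odd legs and $s+1$ legs of length $1$, verify $r+1\geq 3$ and $2(s+1)-(r+1)\geq 2$, and apply Theorem~\ref{th:spider3}.

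One small caveat worth noting: your intermediate claim that ``a path (with any designated center) has at most one odd leg'' is false --- a path of odd order with its middle vertex designated as center has two legs of equal odd length --- so the reason you give for the center of $S$ having degree at least $3$ does not hold in general. This does not damage the argument, because the real point is simpler: every vertex of $S$ other than the center has degree at most $2$ and keeps that degree in $S'$, while only the center's degree goes up; hence $S'$ has at most one vertex of degree exceeding $2$ and is automatically a spider, with no need to argue about the degree of the center of $S$. (If you did want the center of $S$ to have degree $\geq 3$, that does follow under the full hypotheses, but via $k\geq 2$, $r\geq 2$ and $2s-r\geq 1$ jointly, not from $r\geq 2$ alone.)
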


Our next result deals with the special subclass of spiders called brooms. For $s,t\geq1$, a \emph{broom} $B_{s,t}$ is a tree on $s+t$ vertices obtained by identifying the center of a star $K_{1,s}$ and an end-vertex of a path $P_{t}$; see Figure~\ref{fig:broom}. Note that the broom $B_{s,t}$ can be viewed as a spider $S(t_1, t_2, \ldots, t_{s+1})$, where $t_1=\cdots=t_{s}=1$ and $t_{s+1}=t-1$. Moreover, $B_{1,t}=P_{t+1}$ and $B_{s,1}=K_{1,s}$. It is easy to check that if $n$ is large enough, then $S_{n,k}$ contains all brooms of order $2k+3$ except for $B_{1,2k+2}$ and $B_{2,2k+1}$, and $S_{n,k}^{+}$ contains all brooms of order $2k+3$ except for $B_{1,2k+2}$.

\begin{figure}[htbp]
\begin{center}
\begin{tikzpicture}[scale=0.06,auto,swap]
\tikzstyle{vertex}=[circle,draw=black,fill=black]
\node[vertex,scale=0.5] (vt) at (66,-10) {}; \draw (66,-16) node {$t$};
\node[vertex,scale=0.5] (vt-1) at (46,-10) {}; \draw (46,-16) node {$t-1$};
\node[vertex,scale=0.5] (vt-2) at (26,-10) {};
\node[vertex,scale=0.5] (v3) at (-6,-10) {};
\node[vertex,scale=0.5] (v2) at (-26,-10) {}; \draw (-26,-16) node {$2$};
\node[vertex,scale=0.5] (v1) at (-46,-10) {}; \draw (-46,-16) node {$1$};
\node[vertex,scale=0.5] (u1) at (46,10) {}; \draw (46,16) node {1};
\node[vertex,scale=0.5] (u2) at (62,10) {}; \draw (62,16) node {2};
\node[vertex,scale=0.5] (us) at (86,10) {}; \draw (86,16) node {$s$};

\draw[dotted,thick] (-3,-10)--(23,-10); \draw[dotted,thick] (65,10)--(83,10);

\draw[thick] (v1)--(v2); \draw[thick] (v2)--(v3); \draw[thick] (vt-2)--(vt-1); \draw[thick] (vt-1)--(vt);
\draw[thick] (u1)--(vt); \draw[thick] (u2)--(vt); \draw[thick] (us)--(vt);
\end{tikzpicture}
\caption{The broom $B_{s,t}$.}\label{fig:broom}
\end{center}
\vspace{-0.5cm}
\end{figure}
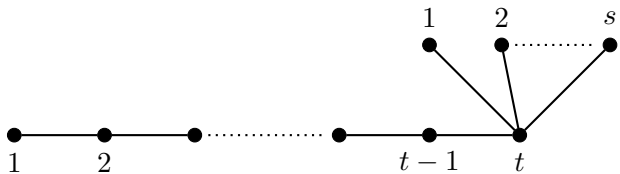

The next theorem confirms Conjecture~\ref{conj:trees1} for all brooms.

\begin{theorem}\label{th:brooms} Let $k\geq 2$ and let $G$ be a graph of sufficiently large order $n$.
\begin{itemize}
\item[{\rm (a)}] If $\mu(G)\geq \mu(S_{n,k})$, then $G$ contains all brooms of order $2k+2$, unless $G=S_{n,k}$.
\item[{\rm (b)}] If $\mu(G)\geq \mu(S_{n,k}^{+})$, then $G$ contains all brooms of order $2k+3$, unless $G=S_{n,k}^{+}$.
\end{itemize}
\end{theorem}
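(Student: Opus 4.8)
The plan is to reduce Theorem~\ref{th:brooms} almost entirely to the three results already available — Theorem~\ref{th:path}, Corollary~\ref{co:spider2} and Theorem~\ref{th:spider3} — and then to handle a single remaining broom by a new spectral-extremal argument. For part~(a): if the broom $B$ of order $2k+2$ is $B_{1,2k+1}=P_{2k+2}$, apply Theorem~\ref{th:path}(a), which is exactly where the exception $G=S_{n,k}$ enters; otherwise $B$ has a vertex of degree at least $3$, and regarding $B$ as a spider of order $2k+2$ a short case check on the parities of the number of its legs of length $1$ and of its unique long leg shows it has $r\ge 2$ odd legs with $2s-r\ge 1$, so Corollary~\ref{co:spider2} gives $B\subseteq G$ with no exception. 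Since $S_{n,k}$ contains every broom of order $2k+2$ except $P_{2k+2}$, part~(a) follows. Part~(b) is analogous: $B_{1,2k+2}=P_{2k+3}$ is handled by Theorem~\ref{th:path}(b), and — after the same parity check — every broom of order $2k+3$ whose spider form has at least three legs of length $1$ is handled by Theorem~\ref{th:spider3} together with $\mu(S_{n,k})<\mu(S_{n,k}^+)$. The one broom of order $2k+3$ this leaves open is $B_{2,2k+1}=S(1,1,2k)$: its center has degree $3$ and it has only $r=2$ odd legs, so Theorem~\ref{th:spider3} does not reach it.

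\textbf{The missing input.} It therefore suffices to prove the following spectral Tur\'an-type statement: \emph{for $n$ sufficiently large, every $B_{2,2k+1}$-free graph $G$ of order $n$ satisfies $\mu(G)\le\mu(S_{n,k})$, with equality only for $G=S_{n,k}$.} Granting this, a graph $G$ with $\mu(G)\ge\mu(S_{n,k}^+)>\mu(S_{n,k})$ cannot be $B_{2,2k+1}$-free, so $B_{2,2k+1}\subseteq G$; and since $S_{n,k}^+$ itself contains $B_{2,2k+1}$ this creates no new exceptional graph, which completes part~(b). I expect this statement to be the crux, and its stability part to be the main obstacle. Note that pure edge counting is useless here: a disjoint union of copies of $K_{2k+2}$ is $B_{2,2k+1}$-free with about $(k+\tfrac12)n$ edges — far more than $S_{n,k}$ — yet spectral radius only $2k+1$, so the argument must use the Perron eigenvector to pin down the dense part of $G$. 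This is presumably where the ``new Tur\'an type result'' advertised in the abstract is used.

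\textbf{Proving the missing input.} Let $G$ be $B_{2,2k+1}$-free with $\mu:=\mu(G)\ge\mu(S_{n,k})$, so $\mu^{2}\ge kn-O(\sqrt n)$, and let $\mathbf x\ge 0$ be a Perron eigenvector with $x_u=\max_v x_v=1$. From $\mu=\sum_{v\sim u}x_v\le d(u)$ we get $d(u)=\Omega(\sqrt n)$, and expanding $\mu^{2}x_u=\sum_{v\sim u}\sum_{w\sim v}x_w$ and splitting according to whether the far endpoint lies in $N[u]$ yields an inequality of the shape $\mu^{2}\le d(u)+2e(G[N(u)])+e\big(N(u),V\setminus N[u]\big)$. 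The essential use of $B_{2,2k+1}$-freeness is that $G$ cannot contain, rooted anywhere, a path on $2k+1$ vertices one endpoint of which has two further neighbours off the path; this bounds the lengths of paths available inside $N(u)\cup\{u\}$ and across the cut, and combined with the lower bound on $\mu^{2}$ should force $G$ to contain a set $W$ of $k$ vertices of degree $n-o(n)$ whose neighbourhoods almost agree and almost cover $V(G)$, with $V(G)\setminus W$ inducing $o(n^{2})$ edges — a stability statement placing $G$ close to $S_{n,k}$.

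\textbf{Finishing locally.} Once $G$ is this rigid the endgame is combinatorial. If some edge joined two vertices outside $W$, one could ``save a clique vertex'': a copy of $S(1,1,2k)$ would arise with center in $W$, its two pendant leaves among the $\Omega(n)$ common neighbours of $W$, and its long leg of length $2k$ alternating between $W$ and $V\setminus W$ but traversing the forbidden edge once — this uses only $k$ vertices of $W$ and contradicts $B_{2,2k+1}$-freeness, so $V\setminus W$ must be independent. Similarly, if $G$ had $k+1$ vertices of degree $n-o(n)$ one would build the long leg alternating through all of them with no edge of $V\setminus W$ needed, again producing $B_{2,2k+1}$; hence $|W|=k$ exactly. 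Then $V\setminus W$ is independent and $|W|=k$, so $G$ is a subgraph of $S_{n,k}$, giving $\mu(G)\le\mu(S_{n,k})$ with equality iff $G=S_{n,k}$, as required. The delicate point throughout is the stability step — converting ``large $\mu$ and $B_{2,2k+1}$-free'' into ``within $o(n^{2})$ edges of $S_{n,k}$'' — where the chords at low-degree vertices and the absence of a helpful edge-count bound have to be controlled by the eigenvector together with the new Tur\'an-type estimate.
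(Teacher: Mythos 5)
Your reduction is exactly the paper's: $B_{1,t}=P_{t+1}$ is dispatched by Theorem~\ref{th:path}; brooms with $s\ge 2$ of order $2k+2$ satisfy $s\le r\le s+1$ so $2s-r\ge 1$ and Corollary~\ref{co:spider2} applies; brooms with $s\ge 3$ of order $2k+3$ satisfy $2s-r\ge 2$ and Theorem~\ref{th:spider3} applies; and you correctly identify $B_{2,2k+1}=S(1,1,2k)$ as the one broom not reached by those results. The gap is in your treatment of that last case. You propose to prove the \emph{stronger} statement that every $B_{2,2k+1}$-free graph has $\mu\le\mu(S_{n,k})$ with equality only for $S_{n,k}$, via a Perron-eigenvector stability argument, but that argument is never actually carried out: the crucial step (``should force $G$ to contain a set $W$ of $k$ vertices of degree $n-o(n)$ whose neighbourhoods almost agree'') is asserted, not derived, and it is precisely the hard part. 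Your ``finishing locally'' paragraph assumes this stability has already been established.

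The paper's route is different and avoids stability entirely. It proves the weaker (but sufficient) statement that $\mu(G)\ge\mu(S_{n,k}^{+})$ forces $B_{2,2k+1}\subseteq G$ (Theorem~\ref{th:broomB22K1}), and it does so by first establishing the purely combinatorial Tur\'an-type bound of Theorem~\ref{th:B22K1} (a connected graph with $e(G)\ge kn-k(k+1)/2+1$ contains $B_{2,2k+1}$), and then feeding this into off-the-shelf spectral machinery: Lemma~\ref{le:sepctral-degree} handles graphs of high minimum degree directly, and Nikiforov's subgraph lemma (Lemma~\ref{le:sepctral-subgraph}) handles graphs with a low-degree vertex by producing a denser subgraph $H$ to which one can either apply Lemma~\ref{le:spiders} (via the edge count in case (i)) or bootstrap back to the opening argument (in case (ii)). You rightly anticipate that ``the new Tur\'an type result'' is used here, but you miss that it is an \emph{edge-count} result for \emph{connected} graphs — your remark that ``pure edge counting is useless here'' (because of disjoint cliques) is precisely why the connectedness hypothesis in Theorem~\ref{th:B22K1} matters, and why Lemma~\ref{le:sepctral-subgraph} is the right tool to pass from the global spectral hypothesis to a connected dense piece. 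In short: the reduction is right, the target lemma is correctly identified, but the proof of that lemma is a sketch of a stability argument that is neither completed nor the one the paper uses.
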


In order to prove Theorem~\ref{th:brooms}, we shall use Theorems~\ref{th:path}, \ref{th:spider3} and the following Tur\'{a}n type result for connected graphs involving the broom $B_{2,2k+1}$. Theorem~\ref{th:B22K1} below might be of independent interest.

\begin{theorem}\label{th:B22K1} For $k\geq 2$ and $n$ sufficiently large, let $G$ be a connected graph of order $n$. If $e(G)\geq e(S_{n,k}^{+})=kn-\frac{k(k+1)}{2}+1$, then $G$ contains $B_{2,2k+1}$ as a subgraph.
\end{theorem}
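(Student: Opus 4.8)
The plan is to argue by contradiction: suppose $G$ is connected, $e(G)\ge kn-\binom{k+1}{2}+1$, and $G$ does not contain $B\colonequals B_{2,2k+1}$; I will deduce $e(G)\le kn-\binom{k+1}{2}$, a contradiction. Two elementary observations drive everything. \emph{(i) If $G$ has a cycle $C$ of length at least $2k+2$, then $B\subseteq G$}: since $G$ is connected and $n$ is large, either some vertex $z\in C$ has a neighbour $x\notin V(C)$, or (if $C$ is spanning) $G$ has a chord $zz'$ at some vertex $z\in C$; taking the handle to be $2k+1$ consecutive vertices of $C$ starting at $z$ in a direction avoiding $x$ (resp.\ $z'$), the two remaining ``close'' neighbours of $z$ --- one further vertex of $C$ and $x$ (resp.\ $z'$) --- serve as the two bristles. \emph{(ii) If $G$ has a path $u_1u_2\cdots u_{2k+2}$ and $u_2$ has a neighbour outside $\{u_1,\dots,u_{2k+2}\}$, then $B\subseteq G$} (centre $u_2$, handle $u_2u_3\cdots u_{2k+2}$, bristles $u_1$ and that neighbour); the same holds for $u_{2k+1}$. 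So it suffices to derive a contradiction under the assumption that neither (i) nor (ii) can be invoked.

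First I would produce a long path. By the connected version of the Erd\H{o}s-Gallai theorem on paths --- for which $S_{n,k}$ is the unique extremal graph when $n$ is large --- the inequality $e(G)>kn-\binom{k+1}{2}=\mathrm{ex}_{\mathrm{conn}}(n,P_{2k+2})$ implies that $G$ contains $P_{2k+2}$; let $P=v_0v_1\cdots v_\ell$ be a longest path of $G$, so $\ell\ge 2k+1$. If $\ell=2k+1$, then $G$ is connected and $P_{2k+3}$-free with $e(G)\ge kn-\binom{k+1}{2}+1=e(S_{n,k}^{+})=\mathrm{ex}_{\mathrm{conn}}(n,P_{2k+3})$; since for $n$ large $S_{n,k}^{+}$ is the unique extremal graph here (adding a second edge inside the independent set of $S_{n,k}$ already produces a $P_{2k+3}$, and every other connected $P_{2k+3}$-free graph has only $O(n)$ edges), we get $G=S_{n,k}^{+}$. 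But $S_{n,k}^{+}$ \emph{does} contain $B$: write $S_{n,k}^{+}=K_k\vee(\overline{K_{n-k}}+uv)$ with $K_k=\{c_1,\dots,c_k\}$ and the independent set equal to $\{u,v,w_1,w_2,\dots\}$; then $c_1,w_1,c_2,w_2,\dots,w_{k-1},c_k,u,v$ is a handle on $2k+1$ vertices at $c_1$ (it uses the added edge $uv$), and $w_k,w_{k+1}$ are bristles at $c_1$. This contradicts $B\not\subseteq G$, so in fact $\ell\ge 2k+2$.

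It remains to handle $\ell\ge 2k+2$. Here observation (ii), applied to the sub-path $v_0v_1\cdots v_{2k+1}$, forces $N_G(v_1)\subseteq\{v_0,v_2,\dots,v_{2k+1}\}$; applied to $v_{m-1}v_m\cdots v_{m+2k}$ for $2\le m\le\ell-2k$ it forces $v_0\not\sim v_m$; and observation (i) (together with the fact that no rotation of the longest path $P$ can be closed into a cycle, as that cycle would have length $>2k+1$) forces $N_G(v_0)\subseteq\{v_1,\dots,v_{2k}\}$. Symmetric statements hold at the $v_\ell$ end, and --- this is the key point --- running the P\'{o}sa rotation-extension argument makes these constraints apply to \emph{every} endpoint and its neighbour of a longest path obtainable from $P$ by rotations. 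When $\ell$ is large the constraints collapse (for instance, once $\ell\ge 4k$ the two displayed containments leave $v_0$ adjacent only to $v_1$, so $v_0$ is a pendant vertex, and iterating peels off a family of pendant vertices while preserving connectivity and the edge surplus over $S_{n,k}$); when $2k+3\le\ell$ is only of size $O(k)$ one finishes by a bounded case analysis. In all cases the outcome to be extracted is structural: either $G$ has at most $k-1$ vertices of degree exceeding $k$, or (up to isomorphism) $G$ is a subgraph of $S_{n,k}$. A routine degree count then yields $e(G)\le kn-\binom{k+1}{2}$, the desired contradiction.

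The hard part will be this last, structural step in the case $\ell\ge 2k+2$: converting ``no long cycle plus confined neighbourhoods of all rotation-endpoints'' into the quantitative bound ``fewer than $k$ vertices of large degree'', which is what is needed to beat $e(S_{n,k})=kn-\binom{k+1}{2}$ rather than only the far weaker Erd\H{o}s-Gallai cycle bound $\tfrac12(2k+1)(n-1)$. Keeping the P\'{o}sa-rotation bookkeeping tight enough to pin down exactly which path-vertices may carry many edges, and dispatching the medium-length-path cases without an unbounded case split, is where the technical effort concentrates; the hypothesis that $n$ is sufficiently large is used both in this step and to invoke the sharp, extremal-graph-identifying forms of the connected Erd\H{o}s-Gallai theorem for $P_{2k+2}$ and for $P_{2k+3}$.
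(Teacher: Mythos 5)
Your approach is genuinely different from the paper's, and it is left as a plan rather than a proof. The paper never invokes P\'osa rotation--extension: it uses the forbidden broom directly to confine the longest path $P$ to $2k+3 \le |V(P)| \le 4k-1$ (Claim~5.1), then kills each ``heavy'' component $H$ of $G - V(P)$ by a per-vertex count $s_v + p_v/2 \le k - \tfrac12$ (where $s_v$ counts edges from $v$ into $P$ and $p_v$ is the length of a longest path in $H$ starting at $v$), powered by the Balister--Bollob\'as--Riordan--Schelp inequality $e(H) \le \sum_v p_v/2$; Claims~5.2--5.6 verify this per-vertex bound case by case. Your observations (i) and (ii) are correct and your disposal of the $\ell=2k+1$ case (i.e.\ longest path $P_{2k+2}$, via the sharp connected $P_{2k+3}$ bound and the fact that $S^+_{n,k}$ contains $B_{2,2k+1}$) is sound.

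The gap is in the $\ell \ge 2k+2$ case, and it is fatal to the proposal as a proof. Everything hinges on the unproved structural claim that a connected, $B_{2,2k+1}$-free graph either has at most $k-1$ vertices of degree exceeding $k$ or is a subgraph of $S_{n,k}$; you explicitly flag this as ``the hard part'' but supply no argument. The pendant-peeling sketch for $\ell \ge 4k$ is incomplete: after deleting the pendant $v_0$, the longest path may shrink, so the iteration can fall out of the $\ell\ge 4k$ regime, and you do not explain why the resulting residue can still be handled. And the ``bounded case analysis'' for $2k+3 \le \ell \le 4k-1$ --- precisely the regime Claim~5.1 reduces to, and where all of the paper's technical content (Claims~5.2--5.6) lives --- is not attempted at all. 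Without either the structural dichotomy or that case analysis carried out, the proposal does not establish the theorem.
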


The remainder of this paper is organized as follows. In the next section, we provide some auxiliary results that will be used in our proofs. In Section~\ref{sec:proof spider3}, we prove Theorem~\ref{th:spider3}. Section~\ref{sec:proof broomB22K1} is devoted to our proof of Theorem~\ref{th:brooms}. In Section~\ref{sec:proof B22K1}, we prove Theorem~\ref{th:B22K1}. Finally, we conclude this paper with some remarks on generalized brooms and by presenting some open problems in Section~\ref{sec:concluding}.


\section{Preliminaries}\label{sec:spiders-pre}

In this section, we provide some additional terminology and lemmas that we will use. Let $G$ be a connected graph. For any vertex $u\in V(G)$, let $N^{d}(u)\colonequals\{v\in V(G)\colon\, d_{G}(v,u)=d\}$, where $d_{G}(v,u)$ is the distance between $u$ and $v$ in $G$. Let $d_{G}(u)$ be the degree of $u$ in $G$ and let $\delta(G)$ be the minimum degree of $G$. For a non-empty subset $U\subseteq V(G)$, let $G[U]$ be the subgraph of $G$ induced by $U$, $E(U)$ be the edge set of $G[U]$, and $e(U)\colonequals|E(U)|$. For two disjoint vertex sets $U, V\subseteq V(G)$, let $E(U,V)$ be the set of edges in $G$ with one end-vertex in $U$ and one end-vertex in $V$, and let $e(U,V)\colonequals|E(U,V)|$. Given a path $P=v_1v_2\cdots v_t$, we denote the sub-path $v_iv_{i+1}\cdots v_j$ by $v_iPv_j$. All logarithms in this paper are to the base 2. We use the standard Bachmann-Landau notation to indicate asymptotic growth rates of functions.

We will use the following known lemma on matrices in the set-up of the proof of Theorem~\ref{th:spider3}. Given an $n\times n$ matrix $A$, let $A_{ij}$ be the $(i,j)$-th entry of $A$ for $1\leq i,j\leq n$.

\begin{lemma}\label{le:trees1}{\normalfont (\cite{GaoHou})} Given $a,b\in \mathbb{Z}^{+}$ and an $n\times n$ nonnegative symmetric irreducible matrix $A$, let $\mu$ be the largest eigenvalue of $A$ and $\mu'$ be the largest root of $f(x)=x^{2}-ax-b$. Define $B=f(A)=A^{2}-aA-bI$ and let $B_{j}=\sum_{i=1}^{n}B_{ij}$ {\rm(}$1\leq j\leq n${\rm)}. If $B_{j}\leq0$ for all $j\in \{1,2,\ldots,n\}$, then $\mu\leq \mu'$, with equality holding if and only if $B_{j}=0$ for all $j\in \{1,2,\ldots,n\}$.
\end{lemma}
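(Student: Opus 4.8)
The plan is to combine the Perron--Frobenius theorem with the observation that an eigenvector of $A$ is an eigenvector of every polynomial in $A$. First I would record the elementary facts about $f$. Since $a,b\in\mathbb{Z}^{+}$, the quadratic $f(x)=x^{2}-ax-b$ opens upwards and has one negative root and one positive root $\mu'=\tfrac12\bigl(a+\sqrt{a^{2}+4b}\bigr)$; hence for any real $t\geq 0$ we have $f(t)\leq 0$ if and only if $t\leq\mu'$, because such a $t$ automatically exceeds the negative root, and likewise $f(t)=0$ if and only if $t=\mu'$. Since $A$ is nonnegative, $\mu\geq 0$, so the inequality $\mu\leq\mu'$ we want is equivalent to the single scalar inequality $f(\mu)\leq 0$, and $\mu=\mu'$ is equivalent to $f(\mu)=0$.

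Next I would bring in the Perron vector. Because $A$ is nonnegative, symmetric and irreducible, the Perron--Frobenius theorem guarantees that $\mu$ is an eigenvalue of $A$ admitting an entrywise positive eigenvector $\mathbf{x}=(x_{1},\dots,x_{n})$, i.e. $A\mathbf{x}=\mu\mathbf{x}$ with $x_{j}>0$ for all $j$. From $A\mathbf{x}=\mu\mathbf{x}$ we get $A^{2}\mathbf{x}=\mu^{2}\mathbf{x}$, so $B\mathbf{x}=(A^{2}-aA-bI)\mathbf{x}=(\mu^{2}-a\mu-b)\mathbf{x}=f(\mu)\mathbf{x}$; that is, $\mathbf{x}$ is an eigenvector of $B$ with eigenvalue $f(\mu)$.

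The key computation is then to sum the coordinates of the vector $B\mathbf{x}$ in two different ways. On one hand, $B\mathbf{x}=f(\mu)\mathbf{x}$, so the sum of its coordinates equals $f(\mu)\sum_{j=1}^{n}x_{j}$. On the other hand, using $B_{j}=\sum_{i=1}^{n}B_{ij}$ and interchanging the order of summation, the sum of the coordinates of $B\mathbf{x}$ equals $\sum_{i=1}^{n}\sum_{j=1}^{n}B_{ij}x_{j}=\sum_{j=1}^{n}x_{j}\sum_{i=1}^{n}B_{ij}=\sum_{j=1}^{n}B_{j}x_{j}$. Equating the two expressions gives $f(\mu)\sum_{j=1}^{n}x_{j}=\sum_{j=1}^{n}B_{j}x_{j}$. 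Since $x_{j}>0$ for every $j$ and $B_{j}\leq 0$ for every $j$ by hypothesis, the right-hand side is $\leq 0$, while $\sum_{j}x_{j}>0$; therefore $f(\mu)\leq 0$, which is exactly $\mu\leq\mu'$.

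For the equality characterization I would argue both directions from the identity $f(\mu)\sum_{j}x_{j}=\sum_{j}B_{j}x_{j}$. If $B_{j}=0$ for all $j$, the right-hand side vanishes, forcing $f(\mu)=0$ and hence $\mu=\mu'$. Conversely, if $\mu=\mu'$ then $f(\mu)=0$, so $\sum_{j}B_{j}x_{j}=0$; since this is a sum of terms $B_{j}x_{j}$ each of which is $\leq 0$ (as $x_{j}>0$ and $B_{j}\leq 0$), every term vanishes, so $B_{j}=0$ for all $j$. This finishes the proof. There is no real obstacle in this argument; the only subtlety is the reduction of ``$\mu\leq\mu'$'' to ``$f(\mu)\leq 0$'', which needs $\mu\geq 0$ (immediate from nonnegativity of $A$, or from Perron--Frobenius) together with the sign pattern of the roots of $f$, after which the two-way count of the coordinate sum of $B\mathbf{x}$ does everything.
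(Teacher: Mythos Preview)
Your argument is correct: the Perron--Frobenius eigenvector gives $B\mathbf{x}=f(\mu)\mathbf{x}$, and summing coordinates against the hypothesis $B_j\le 0$ yields $f(\mu)\le 0$, hence $\mu\le\mu'$, with the equality case handled by strict positivity of $\mathbf{x}$. The paper does not supply its own proof of this lemma but merely cites it from \cite{GaoHou}; your proof is the standard one and is exactly what one would expect to find there.
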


A \emph{linear forest} is a forest all whose components are paths. We shall use the following known results on the Tur\'{a}n numbers of paths and linear forests in the proof of Theorem~\ref{th:spider3}.

\begin{lemma}\label{le:Pt}{\normalfont (\cite{ErGa59})}
For any positive integers $t$ and $n$, we have ${\rm ex}(n, P_t)\leq\frac{t-2}{2}n.$
\end{lemma}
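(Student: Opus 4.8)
The plan is to prove the equivalent contrapositive statement: if $G$ is a graph on $n$ vertices with $e(G)>\frac{t-2}{2}n$, then $G$ contains $P_t$. I would argue by strong induction on $n$. For the base range $n\le t-1$ the claim is vacuous, since then $e(G)\le\binom{n}{2}=\frac{n(n-1)}{2}\le\frac{(t-2)n}{2}$, so the hypothesis cannot hold; hence from now on I may assume $n\ge t$ and that the statement holds for all graphs on fewer than $n$ vertices.

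For the inductive step I would distinguish three cases. First, if $G$ is disconnected, then an averaging argument over its components shows that at least one component $G'$ satisfies $e(G')>\frac{t-2}{2}|V(G')|$ — otherwise summing the reverse inequalities over all components contradicts $e(G)>\frac{t-2}{2}n$ — and since $|V(G')|<n$, the induction hypothesis gives $P_t\subseteq G'\subseteq G$. Second, if $G$ is connected but contains a vertex $v$ with $d_G(v)\le\frac{t-2}{2}$, then $G-v$ has $n-1$ vertices and $e(G-v)=e(G)-d_G(v)>\frac{t-2}{2}n-\frac{t-2}{2}=\frac{t-2}{2}(n-1)$, so induction again produces $P_t\subseteq G-v\subseteq G$.

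The remaining, and main, case is that $G$ is connected with $\delta(G)>\frac{t-2}{2}$, equivalently $\delta(G)\ge\lceil\frac{t-1}{2}\rceil$. Here I would run a classical rotation (Pósa-type) argument on a longest path $P=v_0v_1\cdots v_\ell$ of $G$. If $\ell\ge t-1$ we are done, so suppose $\ell\le t-2$. Maximality of $P$ forces $N(v_0),N(v_\ell)\subseteq V(P)$; setting $A=\{\,i:v_0v_{i+1}\in E(G)\,\}$ and $B=\{\,i:v_\ell v_i\in E(G)\,\}$, both $A$ and $B$ are subsets of $\{0,1,\dots,\ell-1\}$ of size $\ge\delta(G)$, and from $|A|+|B|\ge 2\delta(G)\ge t-1>\ell$ we get $A\cap B\ne\emptyset$. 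Picking $i\in A\cap B$ yields a cycle $C$ through all of $v_0,\dots,v_\ell$; since $|V(C)|=\ell+1\le t-1<n$ and $G$ is connected, some vertex outside $C$ has a neighbour on $C$, and splicing it onto a Hamiltonian path of $C$ gives a path on $\ell+2$ vertices — contradicting maximality of $P$. Hence $\ell\ge t-1$ and $G\supseteq P_t$, which closes the induction.

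The only delicate point is this third case: the hypothesis $\delta(G)>\frac{t-2}{2}$ is far below the trivial threshold $\delta(G)\ge t-1$ that would immediately force a path on $t$ vertices, so the rotation argument together with the use of connectivity to extend the resulting cycle is essential, while everything else is routine bookkeeping. (Alternatively, since the statement is precisely the path case of the Erd\H{o}s--Gallai theorem, one may simply quote it from \cite{ErGa59}.)
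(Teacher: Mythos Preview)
Your proof is correct. The paper does not actually prove this lemma: it is stated as a known result and attributed to Erd\H{o}s and Gallai \cite{ErGa59}, with no argument given. What you have supplied is one of the standard proofs of the path case of the Erd\H{o}s--Gallai theorem --- reduce to the connected, high-minimum-degree case by induction, then run a P\'osa rotation on a longest path to close a Hamiltonian cycle on $V(P)$ and extend --- and all the steps check out, including the counting $|A|+|B|\ge 2\delta(G)\ge t-1>\ell=|\{0,\dots,\ell-1\}|$ and the use of connectivity together with $|V(C)|\le t-1<t\le n$ to find an external neighbour. So there is nothing to compare against in the paper itself; your argument simply fills in a citation with a clean self-contained proof.
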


\begin{lemma}\label{le:kP3}{\normalfont (see \cite[Theorem~2.2]{BuKe})}
For any integer $\ell \geq 2$ and sufficiently large $n$, we have ${\rm ex}(n, \ell P_3)<\left(\ell-\frac{1}{2}\right)n.$
\end{lemma}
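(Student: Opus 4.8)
The plan is to prove, by induction on $\ell$, the (formally stronger) statement that ${\rm ex}(n,\ell P_3)<(\ell-\tfrac12)n$ for all sufficiently large $n$. The base case $\ell=1$ is the classical fact that a $P_3$-free graph has maximum degree at most $1$, hence is a matching together with isolated vertices, so ${\rm ex}(n,P_3)=\lfloor n/2\rfloor\le n/2$; for the inductive step it suffices to carry the non-strict bound ${\rm ex}(n,\ell'P_3)\le(\ell'-\tfrac12)n$ (for $n$ beyond a threshold depending on $\ell'$) as the hypothesis. So fix $\ell\ge2$, assume the statement for $\ell-1$, and let $G$ be an $\ell P_3$-free graph of large order $n$. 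The one idea that makes everything go through is to isolate the high-degree vertices: put $L\colonequals\{v\in V(G)\colon d_G(v)>n/3\}$.

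The first step is to show that $|L|\le\ell-1$. If not, choose $v_1,\dots,v_\ell\in L$ and greedily construct vertex-disjoint copies $Q_1,\dots,Q_\ell$ of $P_3$, with $v_i$ the center of $Q_i$: when choosing the two leaves of $Q_i$ we only need them to avoid the $\ell-1$ other centers and the $2(i-1)$ leaves already used, and this is possible because $d_G(v_i)>n/3>3\ell-1$ once $n$ is large. This produces $\ell$ vertex-disjoint copies of $P_3$ in $G$, contradicting that $G$ is $\ell P_3$-free.

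The second step splits into two cases, according to whether $G$ contains a copy of $P_3$ none of whose three vertices lies in $L$. Suppose first that it does, on vertices $\{a,b,c\}$ with $b$ the center, so $d_G(a),d_G(b),d_G(c)\le n/3$. Then $G'\colonequals G-\{a,b,c\}$ has order $n-3$ and is $(\ell-1)P_3$-free, since adjoining the deleted copy of $P_3$ to any $\ell-1$ vertex-disjoint copies of $P_3$ in $G'$ would give $\ell$ such copies in $G$; hence $e(G')\le(\ell-\tfrac32)(n-3)$ by the induction hypothesis. The number of edges of $G$ meeting $\{a,b,c\}$ is $d_G(a)+d_G(b)+d_G(c)-e_G(\{a,b,c\})\le n-2$ (here $e_G(\{a,b,c\})\ge2$ because $a,b,c$ span a path), so $e(G)\le(\ell-\tfrac32)(n-3)+(n-2)=(\ell-\tfrac12)n-3\ell+\tfrac52<(\ell-\tfrac12)n$. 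If instead $G$ has no such copy of $P_3$, then $G-L$ contains no $P_3$ at all, so $G-L$ is again a matching together with isolated vertices and $e(G-L)\le(n-|L|)/2$. Consequently $e(G)\le\binom{|L|}{2}+|L|(n-|L|)+\tfrac{n-|L|}{2}=(|L|+\tfrac12)n-\tfrac{|L|(|L|+2)}{2}$, and since $|L|\le\ell-1$ and $\ell\ge2$ the right-hand side is strictly smaller than $(\ell-\tfrac12)n$. In both cases $e(G)<(\ell-\tfrac12)n$, which completes the induction.

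I expect the main obstacle to be exactly the point where a naive approach fails: if one simply picks an arbitrary copy of $P_3$ and deletes its three vertices, one may lose $\Theta(n)$ edges because of high-degree vertices, and no amount of bookkeeping then recovers the bound $(\ell-\tfrac12)n$. The fix is the two-part structure above — first cap the number of high-degree vertices through the packing argument of the first step, and only then delete copies of $P_3$ that live entirely in the bounded-degree part $V(G)\setminus L$. The only remaining bit of care is the ``sufficiently large $n$'' clause: each level of the induction decreases the order by $3$ and the packing step requires $n>9\ell-3$, so at level $\ell$ one should take the threshold to be, say, $\max\{9\ell,\,n_{\ell-1}+3\}$, where $n_{\ell-1}$ is the threshold used at level $\ell-1$ (and $n_1=1$).
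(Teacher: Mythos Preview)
Your proof is correct. The paper does not actually prove this lemma; it is quoted from Bushaw and Kettle \cite{BuKe}, who determine ${\rm ex}(n,\ell P_3)$ exactly for large $n$ (the extremal graph is $K_{\ell-1}$ joined to a near-perfect matching on the remaining $n-\ell+1$ vertices, with $(\ell-\tfrac12)n-\tfrac{\ell^2-1}{2}+O(1)$ edges), from which the strict inequality follows at once. Your argument instead gives a self-contained induction, and the high-degree/low-degree split is exactly the right device to sidestep the obstacle you identify: deleting an arbitrary $P_3$ could cost $\Theta(n)$ edges, but a $P_3$ lying entirely outside $L$ costs at most $n-2$. This is more elementary than the full Bushaw--Kettle result and entirely sufficient here; note that your Case~2 bound with $|L|=\ell-1$ coincides with the Bushaw--Kettle extremal value $(\ell-\tfrac12)n-\tfrac{\ell^2-1}{2}$, so nothing is lost.
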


\begin{lemma}\label{le:linearforest1}{\normalfont (see \cite[Theorem~2]{LidiLiu})}
For any integer $\ell \geq 2$, let $F=\bigcup_{1\leq i\leq \ell}P_{a_i}$ be a linear forest with $a_i\geq 2$ for all $i\in [\ell]$. If at least one $a_i$ is not 3, then for $n$ sufficiently large,
$${\rm ex}(n, F)\leq \Bigg(\sum_{1\leq i\leq \ell}\left\lfloor\frac{a_i}{2}\right\rfloor-1\Bigg)n.$$
\end{lemma}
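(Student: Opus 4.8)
The plan is to prove the equivalent assertion that every $F$-free graph $G$ on $n$ vertices, with $n$ large, has $e(G)\le(k-1)n$, where $k:=\sum_{i=1}^{\ell}\lfloor a_i/2\rfloor$, arguing by induction on $\ell$. Arrange the components so that $a_1\ge\cdots\ge a_\ell\ge 2$. Because not all $a_i$ equal $3$, one can single out a component $P_b$ so that the linear forest $F'$ obtained by deleting it has $\ell-1$ components and, when $\ell\ge 3$, still has at least one component different from $P_3$ (if $F$ has two components that are not $P_3$ delete any one; if it has exactly one, delete a $P_3$, which exists since $\ell\ge 3$); thus the induction hypothesis applies to $F'$. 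When $\ell=2$, $F'$ is a single path, and we instead invoke the Erd\H{o}s--Gallai bound (Lemma~\ref{le:Pt}) directly. If $G$ is $F'$-free we get $e(G)\le(k'-1)n\le(k-1)n$ with $k'=k-\lfloor b/2\rfloor$, and we are done; so we may assume $G$ contains a copy of $F'$.

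Fix a copy of $F'$ on a vertex set $W$ with $|W|=|F|-b=O(1)$. Then $G-W$ is $P_b$-free, since a $P_b$ in $G-W$ together with this copy of $F'$ would produce $F\subseteq G$; hence Lemma~\ref{le:Pt} gives $e(G-W)\le\tfrac{b-2}{2}(n-|W|)$. Using $e(G)\le e(G-W)+e(W)+e(W,V(G)\setminus W)$ and $e(W)=O(1)$, the whole problem reduces to controlling the cross edges $e(W,V(G)\setminus W)$, for which the trivial bound $|W|\,n$ is useless. The point is that $\ell\ge 2$ forces $k\ge\lfloor b/2\rfloor+1$, so $\tfrac{b-2}{2}<k-1$ with slack at least $\tfrac12$; consequently it suffices to find a copy of $F'$ \emph{all of whose vertices have degree at most a constant $D=D(F)$ in $G$}. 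For such a copy, $e(W,V(G)\setminus W)\le D|W|=O(1)$, whence $e(G)\le\tfrac{b-2}{2}n+O(1)\le(k-1)n$ for $n$ large, contradicting $e(G)>(k-1)n$.

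I expect the main obstacle to be exactly the production of this low-degree copy of $F'$, i.e. showing that the subgraph $G_{\le D}$ induced on the vertices of degree at most $D$ still contains $F'$. The natural device is a dichotomy on the set $B$ of ``hub'' vertices of large degree: if $|B|\ge k$, one weaves a copy of $F$ directly, using $k$ hubs with pairwise large common neighbourhoods as in the structure of $S_{n,k}=K_k\vee\overline{K_{n-k}}$ (which, for $n$ large, already contains every linear forest $F$ with $\sum\lfloor a_i/2\rfloor\le k$), a contradiction; while if $|B|\le k-1$, one passes to $G-B$, which has bounded maximum degree and is $F$-free, and repeats the analysis there, tracking how the relevant parameter decreases, until the induction hypothesis for $F'$ can be applied inside $G_{\le D}$. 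Making this robust — fixing the ``hub'' threshold correctly and, above all, handling the regime in which the hubs carry almost all of the edges (so that $G$ is essentially $K_{k-1}\vee\overline{K_{n-k+1}}$ and a direct count already yields $e(G)<(k-1)n$) — is a stability-type argument and is where the real work lies. The hypothesis that not all $a_i$ equal $3$ is essential precisely at this last step: if $F=\ell P_3$ one may add a near-perfect matching to the independent part of $K_{\ell-1}\vee\overline{K_{n-\ell+1}}$ and remain $F$-free, pushing the edge count up to about $(\ell-\tfrac12)n$ (cf.\ Lemma~\ref{le:kP3}), whereas a single component that is a $P_2$ or a path on at least $4$ vertices forbids this augmentation and restores the bound $(k-1)n$.
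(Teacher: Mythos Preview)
The paper does not prove this lemma at all: it is quoted verbatim as a known result from \cite{LidiLiu} (Lidick\'y--Liu--Palmer) and used as a black box in Section~\ref{sec:proof spider3}. So there is no ``paper's own proof'' to compare against.

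As for your sketch on its own merits: the overall architecture---induct on $\ell$, remove a copy of the smaller forest $F'$ on a bounded set $W$, use Erd\H{o}s--Gallai on $G-W$, and then argue that $W$ can be chosen with only $O(1)$ outgoing edges---is exactly the shape of the argument in \cite{LidiLiu}. What you have written, however, is an outline rather than a proof: the entire weight sits on the assertion that one can find a copy of $F'$ inside the low-degree part $G_{\le D}$, and your treatment of this (``if there are $\ge k$ hubs, weave $F$ directly; if $\le k-1$ hubs, pass to $G-B$ and repeat'') is not yet a proof. In particular, in the few-hubs case you need an actual statement about what survives in $G-B$ and why the induction closes (the parameter that decreases, and the base case), and in the many-hubs case you need to justify that $k$ high-degree vertices really do force a copy of $F$ in $G$---pairwise large common neighbourhoods are not automatic from large individual degrees. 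Your diagnosis of why ``not all $a_i=3$'' is needed is correct and is the right guide for the endgame, but the stability step you flag as ``where the real work lies'' is simply not done.
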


Next we introduce two lemmas that will be used in the proof of Theorem~\ref{th:B22K1}.

\begin{lemma}\label{le:sumpath}{\normalfont (\cite{BBRS})} Let $G$ be a graph and for each vertex $v\in V(G)$, let $p_v$ be the length of a longest path in $G$ starting at $v$. Then
$e(G)\leq \sum_{v\in V(G)}\frac{p_v}{2}$.
\end{lemma}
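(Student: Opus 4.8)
The plan is to prove the equivalent inequality $\sum_{v\in V(G)}p_v\ge 2e(G)$ (the usual Erd\H{o}s-Gallai-type formulation) by induction on $|V(G)|$. Since $p_v$ depends only on the component containing $v$ and both sides are additive over components, I may assume $G$ is connected; the cases $|V(G)|\le 2$ are immediate, so I take $|V(G)|\ge 3$ and fix a longest path $P=v_0v_1\cdots v_k$, where $k\ge 2$. Maximality of $P$ forces every neighbour of $v_0$ to lie on $P$, and it gives $p_{v_0}=k$ (and symmetrically for $v_k$).

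The first case to handle is that in which some longest path $Q$ of $G$ has its two endpoints joined by an edge. Then $Q$ together with that edge is a cycle $C$ of length $k+1$, and if $C$ omitted a vertex, connectedness would supply an edge from $C$ to the outside and hence a path longer than $P$, a contradiction. Thus $C$ is a Hamilton cycle, so $p_v=|V(G)|-1$ for every $v$ and $e(G)\le\binom{|V(G)|}{2}=\tfrac12\sum_v p_v$.

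Otherwise no longest path has adjacent endpoints, so in particular $v_0\not\sim v_k$ and $d:=d_G(v_0)\le k-1$. I would then delete $v_0$; writing $G'=G-v_0$, the induction hypothesis gives $\sum_{u\ne v_0}p_u(G')\ge 2e(G')=2e(G)-2d$. Since deleting a vertex never lengthens a longest path, $p_u(G)\ge p_u(G')$ for all $u$, and together with $p_{v_0}(G)=k$ this reduces the goal $\sum_v p_v(G)\ge 2e(G)$ to the single inequality
\[
\Delta\ :=\ \sum_{u\ne v_0}\bigl(p_u(G)-p_u(G')\bigr)\ \ge\ 2d-k,
\]
which is trivial when $d\le k/2$ because $\Delta\ge 0$.

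The remaining subcase $d>k/2$ is the crux and the step I expect to be the main obstacle. Here one must produce a total decrease of at least $2d-k$ (equivalently at least $d-1$, since $d\le k-1$) in the parameters $p_u$, $u\ne v_0$, when $v_0$ is removed. The natural tool is path rotation: with $N(v_0)=\{v_{a_1},\dots,v_{a_d}\}$ and $1=a_1<\cdots<a_d$, each rotated path $v_{a_j-1}v_{a_j-2}\cdots v_1 v_0 v_{a_j}v_{a_j+1}\cdots v_k$ (for $j\ge 2$) is again a longest path, so $p_{v_{a_j-1}}(G)=k$. The heart of the matter is to show that, for at least $d-1$ of the path vertices, \emph{every} longest path starting there in $G$ passes through $v_0$, so that $p_u$ strictly drops in $G'$; this is where one exploits that $d>k/2$ forces consecutive neighbours of $v_0$ on $P$ and makes the long paths through $v_0$ rigid enough to pin down. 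Controlling overlaps among these vertices, and ruling out surviving long paths that avoid $v_0$ (including ones using chords of $P$), is the delicate bookkeeping that I expect to constitute the bulk of the work; once the displayed inequality is secured the induction closes and the lemma follows.
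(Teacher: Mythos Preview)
The paper does not prove this lemma; it merely quotes it from \cite{BBRS}. So there is no in-paper argument to compare against, and I will simply assess your attempt.

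Your reduction to connected graphs, the base cases, and the Hamiltonian case (some longest path has adjacent endpoints, forcing a Hamilton cycle and hence $p_v=n-1$ for all $v$) are all correct. The genuine gap is the ``hard subcase'' $d>k/2$. You correctly isolate the needed inequality $\Delta\ge 2d-k$ and note that the rotated paths give $p_{v_{a_j-1}}(G)=k$ for $j\ge 2$, but you never establish that these $p$-values actually drop in $G'=G-v_0$. That claim---that every longest path from $v_{a_j-1}$ must pass through $v_0$---is exactly the substance of the lemma at those vertices, and you offer no mechanism to verify it; ``delicate bookkeeping'' is a hope, not an argument. As written the proof is incomplete.

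There is, however, a one-line repair that bypasses the difficulty entirely and is in fact the standard argument: delete \emph{both} endpoints of $P$ rather than just $v_0$. In your non-Hamiltonian case, the usual Ore/P\'osa crossing argument shows that if $d_G(v_0)+d_G(v_k)\ge k+1$ then some $v_i$ has $v_0\sim v_i$ and $v_k\sim v_{i-1}$, yielding a $(k{+}1)$-cycle on $V(P)$ and hence a longest path with adjacent endpoints, contrary to assumption. Therefore $d_G(v_0)+d_G(v_k)\le k$. Setting $G''=G-v_0-v_k$ (note $v_0\not\sim v_k$), the induction hypothesis gives
\[
e(G)=e(G'')+d_G(v_0)+d_G(v_k)\le \tfrac12\!\!\sum_{v\ne v_0,v_k}\!\! p_v(G'')+k
\le \tfrac12\!\!\sum_{v\ne v_0,v_k}\!\! p_v(G)+\tfrac{p_{v_0}(G)+p_{v_k}(G)}{2}
=\tfrac12\sum_{v} p_v(G),
\]
since $p_{v_0}(G)=p_{v_k}(G)=k$ and $p_v(G'')\le p_v(G)$. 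This closes the induction with no case split on the size of $d$ and no rotation analysis.
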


\begin{lemma}\label{le:P2K3}{\normalfont (\cite{BGLS})} For $k\geq 2$ and $n$ sufficiently large, let $G$ be a connected graph of order $n$ with $G\neq S^{+}_{n,k}$. If $e(G)\geq e(S_{n,k}^{+})=kn-\frac{k(k+1)}{2}+1$, then $G$ contains $P_{2k+3}$ as a subgraph.
\end{lemma}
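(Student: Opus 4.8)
To prove Lemma~\ref{le:P2K3}, the plan is to establish the equivalent sharp Tur\'{a}n-type statement: for $n$ sufficiently large, $S_{n,k}^{+}$ is the \emph{unique} connected $P_{2k+3}$-free graph of order $n$ with the maximum number of edges $e(S_{n,k}^{+})=kn-\binom{k+1}{2}+1$. That $S_{n,k}^{+}$ is itself $P_{2k+3}$-free is immediate: its longest path consists of the added edge followed by an alternating walk through the $k$ clique vertices and the independent set, and has exactly $2k+2$ vertices; so the content is the bound together with the identification of the extremal graph. This refines, for connected graphs, the Erd\H{o}s--Gallai bound $\mathrm{ex}(n,P_{2k+3})\le\tfrac{(2k+1)n}{2}$, and it belongs to the family of Kopylov-type path theorems. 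I would split the argument into a routine inductive bootstrap and one genuinely hard ingredient.

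\medskip\noindent\emph{Reduction step.} Let $G$ be connected and $P_{2k+3}$-free, and let $P=v_0v_1\cdots v_\ell$ be a longest path, so $\ell\le 2k+1$. For $n$ large, $|V(P)|\le 2k+2<n$, so $V(P)\subsetneq V(G)$ and, $G$ being connected, some vertex outside $V(P)$ is adjacent to $V(P)$; hence $G[V(P)]$ spans no cycle, since such a cycle together with an outside neighbour would give a path on $\ell+2$ vertices. Then $\{\,i:v_0v_i\in E(G)\,\}$ and $\{\,i:v_\ell v_{i-1}\in E(G)\,\}$ are disjoint subsets of $\{1,\dots,\ell\}$ (a common index $i$ would yield the spanning cycle $v_0v_1\cdots v_{i-1}v_\ell v_{\ell-1}\cdots v_iv_0$), so $d_G(v_0)+d_G(v_\ell)\le\ell\le 2k+1$ and some endpoint, say $v_0$, satisfies $d_G(v_0)\le k$. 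Moreover, an endpoint of a longest path is never a cut vertex (otherwise a neighbour of it in another component of the deletion would extend $P$), so $G-v_0$ is again connected and $P_{2k+3}$-free.

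\medskip\noindent\emph{Induction and the uniqueness endgame.} I would induct on $n$, the inductive hypothesis being both the edge bound and the equality characterisation, anchored at a large constant $n_0$. The bound follows from the reduction step: $e(G)\le e(G-v_0)+d_G(v_0)\le e(S_{n-1,k}^{+})+k=e(S_{n,k}^{+})$. For equality, suppose $e(G)=e(S_{n,k}^{+})$; this chain forces $d_G(v_0)=k$ and $e(G-v_0)=e(S_{n-1,k}^{+})$, so by induction $G-v_0\cong S_{n-1,k}^{+}=K_k\vee\overline{K_{n-1-k}}$ with one extra edge inside the independent set. Write $C$ for its set of $k$ vertices of full degree; since $N_G(v_0)$ is a $k$-subset of $V(G-v_0)$, it suffices to prove $N_G(v_0)=C$. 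The key observation is that every vertex $u$ of the independent set of $S_{n-1,k}^{+}$ is an endpoint of a path on $2k+2$ vertices inside $S_{n-1,k}^{+}$: start at $u$, use one clique vertex, traverse the extra edge, and then alternate the remaining $k-1$ clique vertices with fresh independent vertices (which exist because $n$ is large; the case that $u$ is an end of the extra edge is handled by starting with that edge). If $v_0$ were adjacent to such a $u$, prefixing $v_0$ to this path would give a $P_{2k+3}$ in $G$, a contradiction. Hence $N_G(v_0)$ avoids the independent set, forcing $N_G(v_0)=C$, and therefore $G\cong S_{n,k}^{+}$.

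\medskip\noindent\emph{The main obstacle.} The hard part is that the induction cannot be run down to small $n$: for $n$ below a threshold $n_0$ of order $k$ there are connected $P_{2k+3}$-free graphs at least as dense as $S_{n,k}^{+}$ (for instance a clique $K_{2k+1}$ with $n-2k-1$ pendant vertices hung at a single vertex, or more generally the graphs $K_a\vee\bigl(K_{2k+2-2a}\cup\overline{K_{n-2k-2+a}}\bigr)$ for small $a$), so the statement genuinely fails there. The induction must therefore be anchored at one large value $n_0$, and proving $e(G)\le e(S_{n_0,k}^{+})$ with equality only for $S_{n_0,k}^{+}$ is precisely the sharp connected Erd\H{o}s--Gallai theorem of Kopylov and of Balister--Gy\H{o}ri--Lehel--Schelp \cite{BGLS}. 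A self-contained proof of this base case requires the full rotation--extension analysis of a longest path, simultaneously controlling the edges inside the ``core'' $G[V(P)]$ and those between the core and the periphery, since the naive estimates (for example that $G[V(P)]$ spans no Hamilton cycle) are off by $\Theta(k^{2})$ edges. I would invoke \cite{BGLS} for this base case; the inductive bootstrap and the pendant-attachment analysis above then complete the proof.
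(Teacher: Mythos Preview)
The paper does not prove this lemma; it is quoted verbatim as a result of Balister--Gy\H{o}ri--Lehel--Schelp \cite{BGLS} and used as a black box in Section~\ref{sec:proof B22K1}. So there is no in-paper proof to compare against.

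Your reduction step is correct: on a longest path $P=v_0\cdots v_\ell$ with $\ell\le 2k+1$, the absence of a spanning cycle on $V(P)$ (which would yield a longer path via an outside neighbour) forces $d_G(v_0)+d_G(v_\ell)\le\ell$, so some endpoint has degree at most $k$ and is not a cut vertex; the inductive count and the equality analysis identifying $G\cong S_{n,k}^{+}$ are also fine.

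The difficulty is that your argument is, in the end, circular. You correctly observe that the induction cannot be started at small $n$, and then propose to anchor it at some large $n_0$ by invoking \cite{BGLS}. But the statement of \cite{BGLS} you would be invoking at $n_0$ is exactly Lemma~\ref{le:P2K3} itself, and \cite{BGLS} already proves it for all sufficiently large $n$; once you have that, the inductive bootstrap is redundant. What your outline really establishes is only the (true, and mildly interesting) remark that the full strength of the theorem is concentrated at a single large value of $n$, with the remaining values following by a degree-$k$ vertex deletion. It does not supply an independent proof: the base case is precisely where Kopylov's closure/rotation machinery and the simultaneous control of edges inside and outside the longest path are needed, and you have not carried that out. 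If your intention is simply to cite the result, as the paper does, then the surrounding inductive apparatus can be dropped.
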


We shall also apply the following partial solution of the Erd\H{o}s-S\'{o}s Conjecture.

\begin{lemma}\label{le:spiders}{\normalfont (\cite{FanSun})}
If $G$ is a graph on $n$ vertices with $e(G)>\frac{(t-2)n}{2}$, then $G$ contains every $t$-vertex spider with three legs.
\end{lemma}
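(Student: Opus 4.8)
The plan is to prove the contrapositive in extremal form: we assume $G$ is a connected graph of order $n$ (with $n$ large, $k\ge 2$) that does \emph{not} contain $B_{2,2k+1}$, and we bound $e(G)$ from above by $kn-\frac{k(k+1)}{2}$, i.e. strictly less than $e(S_{n,k}^{+})$. Since $B_{2,2k+1}=S(1,1,2k-1)$ is a spider on $2k+2$ vertices with exactly three legs, Lemma~\ref{le:spiders} (applied with $t=2k+2$) already tells us that if $e(G)>\frac{(2k)n}{2}=kn$, then $G$ contains $B_{2,2k+1}$; so we may assume $e(G)\le kn$. This is not yet strong enough (we need to shave off roughly $\binom{k+1}{2}$ edges), so the real work is to exploit $B_{2,2k+1}$-freeness more carefully, together with the connectivity of $G$, to improve the Erd\H{o}s--S\'os-type bound by the required additive term.

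First I would set up the structure forced by $B_{2,2k+1}$-freeness. Note $B_{2,2k+1}$ consists of a path $P_{2k+1}$ with two extra pendant edges attached at one endpoint; equivalently, it is obtained from $P_{2k+2}$ by attaching one pendant edge at the second vertex, or it is a path $P_{2k}$ whose endpoint is blown up by three pendant edges (two of them leaves of the original path, one new). The key is a longest-path argument. Let $P=v_0v_1\cdots v_\ell$ be a longest path in $G$. If $\ell\ge 2k+1$, then $v_1$ together with the sub-path $v_1v_2\cdots v_{2k+1}$ gives a $P_{2k+1}$ starting at $v_1$; since $v_0$ is a neighbour of $v_1$ off this path, we need only \emph{one more} neighbour of $v_1$ outside $\{v_0,v_2\}$ to complete a $B_{2,2k+1}$ centred at $v_1$ with the long leg going toward $v_{2k+1}$. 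By maximality of $P$, every neighbour of $v_1$ lies on $P$, but there can still be chords; I would analyze these chords (a rotation/extension argument in the spirit of the proofs behind Lemma~\ref{le:P2K3} and Lemma~\ref{le:sumpath}) to show that $B_{2,2k+1}$-freeness forces $\ell\le 2k$, or else forces $v_1$ (and symmetrically $v_{\ell-1}$) to have very restricted neighbourhoods. Combining with Lemma~\ref{le:sumpath} — $e(G)\le\frac12\sum_{v}p_v$, where $p_v$ is the length of a longest path from $v$ — a bound $p_v\le 2k$ for all $v$ would give $e(G)\le kn$ immediately, so the point of the finer chord analysis is to gain the extra $\binom{k+1}{2}$.

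The mechanism for the extra term should be the following dichotomy. Either (i) $G$ has a vertex $w$ of "large" degree, say $d_G(w)\ge$ some threshold growing with $n$; then starting a path at a neighbour of $w$ and using two further neighbours of $w$ as the two short legs, $B_{2,2k+1}$-freeness forces all long paths through the neighbourhood of $w$ to be short, which pins down the global structure so tightly that $G$ is essentially a subgraph of $S_{n,k}$ (possibly plus $O(1)$ edges in the clique part), whence $e(G)\le e(S_{n,k})+O(1)<e(S_{n,k}^{+})$; or (ii) $\Delta(G)$ is bounded, in which case $G$ is sparse and one sparsifies directly: a bounded-degree connected $B_{2,2k+1}$-free graph on $n$ vertices has $e(G)\le\frac{(2k-1)}{2}n+O(1)<kn-\binom{k+1}{2}$ for $n$ large, via Lemma~\ref{le:Pt} (it is $P_{2k+2}$-free or very close to it) combined with the pendant-edge obstruction. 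In case (i), the hard part is going from "$G$ looks like $S_{n,k}$" to a clean edge count; I would handle it by the standard trick of letting $W$ be the set of vertices of degree $\ge n^{2/3}$ (say), arguing $|W|\le k$ from $B_{2,2k+1}$-freeness plus Lemma~\ref{le:spiders} applied locally, then bounding $e(G-W)$ by a Turán-type estimate (it contains no $P_{2k+1}$ that can be extended, so $e(G-W)$ is $O(n)$ with the right constant after using connectivity to control components).

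\textbf{Main obstacle.} The genuinely delicate point is squeezing out the additive constant $\binom{k+1}{2}$, i.e. matching $e(S_{n,k})=kn-\binom{k+1}{2}$ rather than just $kn$, since the crude application of Lemma~\ref{le:spiders} only yields the leading term. This requires the structural case analysis above — especially case (i), the "$W$ is a small dominating clique-like set" regime — to be carried out with enough care that the $O(1)$ error is genuinely bounded by $1$ (so that we land strictly below $e(S_{n,k}^{+})=e(S_{n,k})+1$) and that the near-extremal configurations other than $S_{n,k}^{+}$ itself are ruled out. I expect this to mirror, but be somewhat more involved than, the analysis behind Lemma~\ref{le:P2K3}, because the pendant-edge requirement of $B_{2,2k+1}$ is more forgiving than a bare long path and so the extremal family must be controlled by hand.
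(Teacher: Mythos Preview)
Your proposal does not address the stated lemma at all. Lemma~\ref{le:spiders} is the Fan--Sun result that any graph with more than $\tfrac{(t-2)n}{2}$ edges contains every $t$-vertex spider with three legs; in this paper it is simply \emph{cited} from \cite{FanSun} and never proved. What you have written is instead a sketch toward Theorem~\ref{th:B22K1} (the Tur\'an-type statement for $B_{2,2k+1}$), and you explicitly \emph{invoke} Lemma~\ref{le:spiders} as a black box inside that sketch. So there is nothing here to compare with a proof of Lemma~\ref{le:spiders}: you have confused the target.

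Even read as an outline for Theorem~\ref{th:B22K1}, the proposal diverges substantially from the paper and has real gaps. First, a minor but telling slip: $B_{2,2k+1}$ has $2k+3$ vertices (it is $S(1,1,2k)$, not $S(1,1,2k-1)$), so the correct instance of Lemma~\ref{le:spiders} is $t=2k+3$, giving the weaker threshold $e(G)>\tfrac{(2k+1)n}{2}$, not $kn$. More importantly, your large-degree/bounded-degree dichotomy is not how the paper proceeds, and neither branch is justified: in branch (i) you assert that $B_{2,2k+1}$-freeness forces $|W|\le k$ and that $G$ then looks like $S_{n,k}$, but this is precisely the structural content that requires proof; in branch (ii) you claim a bounded-degree $B_{2,2k+1}$-free graph satisfies $e(G)\le\tfrac{2k-1}{2}n+O(1)$ ``via Lemma~\ref{le:Pt}'', yet such a graph need not be $P_{2k+2}$-free at all. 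The paper's actual argument (Section~\ref{sec:proof B22K1}) is entirely different: it fixes a longest path $P$, shows $2k+3\le |P|\le 4k-1$, and then, for a carefully chosen component $H$ of $G-V(P)$, bounds $e(V(P),V(H))+e(H)\le (k-\tfrac12)|H|$ through a sequence of claims controlling neighbours of $V(H)$ on $P$ and using Lemma~\ref{le:sumpath}. Summing over components yields the contradiction with $e(G)\ge kn-\binom{k+1}{2}+1$. None of that structure appears in your sketch.
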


We end this section with some known results about $\mu(G)$.

\begin{lemma}\label{le:sepctral-degree}{\normalfont (\cite{Nikiforov3})} If $G$ is a graph with $n$ vertices, $m$ edges and $\delta(G)=\delta$, then
$$\mu(G)\leq \frac{\delta-1}{2}+\sqrt{2m-\delta n +\frac{(\delta+1)^{2}}{4}}.$$
\end{lemma}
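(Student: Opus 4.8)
The plan is to recast the claimed bound as a quadratic condition on $\mu=\mu(G)$ and then to verify that condition by a column-sum estimate on a quadratic polynomial of $A(G)$, in the spirit of Lemma~\ref{le:trees1}. Write $\delta=\delta(G)$, $m=e(G)$, and set $b:=2m-\delta(n-1)$; since every degree is at least $\delta$ we have $b=\sum_{v\in V(G)}(d_G(v)-\delta)+\delta\geq\delta\geq 0$. A direct computation gives $\frac{(\delta-1)^2}{4}+b=2m-\delta n+\frac{(\delta+1)^2}{4}$, so the right-hand side of the asserted inequality is exactly the larger root of $x^2-(\delta-1)x-b$. As this quadratic opens upward, the inequality $\mu\leq\frac{\delta-1}{2}+\sqrt{2m-\delta n+\frac{(\delta+1)^2}{4}}$ is equivalent to $\mu^2-(\delta-1)\mu-b\leq 0$, and this is what I would establish.

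Put $A=A(G)$ and $B:=A^2-(\delta-1)A-bI$. First I would fix a nonnegative, nonzero eigenvector $\mathbf{x}$ of $A$ for the eigenvalue $\mu$: when $G$ is connected this is the positive Perron vector, and in general one takes the Perron vector of a connected component of $G$ attaining $\mu(G)$ and extends it by zeros. Then $A\mathbf{x}=\mu\mathbf{x}$, $\mathbf{x}\geq 0$, $\mathbf{1}^{T}\mathbf{x}>0$, and hence $B\mathbf{x}=(\mu^2-(\delta-1)\mu-b)\mathbf{x}$. The heart of the proof is to show that every column sum of $B$ is nonpositive. Fix a vertex $j$. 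Since $\sum_i(A^2)_{ij}$ counts the walks of length two with terminal vertex $j$, it equals $\sum_{k\in N(j)}d_G(k)$, so $\sum_i B_{ij}=\sum_{k\in N(j)}d_G(k)-(\delta-1)d_G(j)-b$. From $2m=\sum_{v\in V(G)}d_G(v)=\sum_{k\in N(j)}d_G(k)+d_G(j)+\sum_{w\notin N[j]}d_G(w)$ and the fact that each of the $n-1-d_G(j)$ vertices $w\notin N[j]$ satisfies $d_G(w)\geq\delta$, I get $\sum_{k\in N(j)}d_G(k)\leq 2m-d_G(j)-\delta(n-1-d_G(j))$. Substituting this bound into the previous expression, the $d_G(j)$-terms cancel and one is left with $\sum_i B_{ij}\leq 2m-\delta(n-1)-b=0$.

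To finish, I would combine the two observations: on the one hand $\mathbf{1}^{T}B\mathbf{x}=\sum_j(\sum_i B_{ij})x_j\leq 0$ because $\mathbf{x}\geq 0$ and every column sum of $B$ is at most $0$; on the other hand $\mathbf{1}^{T}B\mathbf{x}=(\mu^2-(\delta-1)\mu-b)\,\mathbf{1}^{T}\mathbf{x}$ with $\mathbf{1}^{T}\mathbf{x}>0$. Therefore $\mu^2-(\delta-1)\mu-b\leq 0$, which is precisely the desired bound. (When $G$ is connected and $\delta\geq 2$ one may instead invoke Lemma~\ref{le:trees1} verbatim with $a=\delta-1$ and the above $b$, the column-sum computation being exactly the verification of its hypothesis; giving the short argument directly has the advantage of covering disconnected graphs and all values of $\delta$ uniformly.)

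I do not expect a serious difficulty, but two points deserve attention. The first is the passage to a nonnegative eigenvector in the disconnected case, which is what keeps $\mathbf{1}^{T}B\mathbf{x}\leq 0$ valid. The second, and the only mildly subtle step, is the degree estimate $\sum_{k\in N(j)}d_G(k)\leq 2m-d_G(j)-\delta(n-1-d_G(j))$ together with the way it is used: it is tempting to try to bound $\sum_i(A^2)_{ij}$ by applying $A^2$ to a maximum-entry vertex $u$ of the Perron vector, but that route stalls because $d_G(u)$ can be as large as $n-1$, so one only recovers Hong's bound $\mu\leq\sqrt{2m-n+1}$; passing instead to column sums (equivalently, using the global degree identity above) is exactly what makes the unwanted degree term cancel and produces the sharper estimate.
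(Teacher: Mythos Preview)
The paper does not supply a proof of this lemma; it is quoted directly from \cite{Nikiforov3} as a known result. Your argument is correct and complete: the identification of the right-hand side as the larger root of $x^{2}-(\delta-1)x-b$ with $b=2m-\delta(n-1)\ge 0$, the column-sum estimate $\sum_i B_{ij}\le 0$ via the degree identity, and the nonnegative-eigenvector step all go through exactly as you describe. Since there is no proof in the paper to compare against, there is nothing further to contrast; your approach is in fact the standard one, essentially what Nikiforov does in the original reference, and your parenthetical remark about Lemma~\ref{le:trees1} correctly identifies the connection.
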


\begin{lemma}\label{le:sepctral-degree2}{\normalfont (\cite{Stan})} If $G$ is a graph with $m$ edges, then
$$\mu(G)\leq -\frac{1}{2}+\sqrt{2m+\frac{1}{4}}.$$
\end{lemma}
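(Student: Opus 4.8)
\emph{Proof proposal.} Writing $m=e(G)$, the plan is to prove the equivalent quadratic bound
\[
\mu(G)^{2}+\mu(G)\le 2m,
\]
and then solve for $\mu(G)$. If $G$ has no edges this is trivial ($\mu(G)=0$ and the right-hand side is $0$), so assume $m\ge 1$. Let $A=A(G)$, $\mu=\mu(G)$, and fix a unit eigenvector $\mathbf{x}=(x_{1},\dots,x_{n})^{T}$ of $A$ belonging to $\mu$ with all $x_{i}\ge 0$; such a vector exists because $A$ is entrywise nonnegative, so in $\mu=\max_{\|\mathbf{y}\|_{2}=1}\mathbf{y}^{T}A\mathbf{y}$ any unit maximizer $\mathbf{y}$ may be replaced by its entrywise modulus $|\mathbf{y}|$ without decreasing the Rayleigh quotient, and a maximizer over the unit sphere is an eigenvector for $\mu$. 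From $A\mathbf{x}=\mu\mathbf{x}$ we get $A^{2}\mathbf{x}=\mu^{2}\mathbf{x}$, hence $(A^{2}+A)\mathbf{x}=(\mu^{2}+\mu)\mathbf{x}$ and therefore
\[
\mu^{2}+\mu=\mathbf{x}^{T}(A^{2}+A)\mathbf{x}=\sum_{i,j}(A^{2}+A)_{ij}\,x_{i}x_{j}.
\]

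The crucial observation is that $M:=A^{2}+A$ is a \emph{nonnegative} symmetric matrix: the diagonal entry is $M_{ii}=(A^{2})_{ii}=d_{G}(i)\ge 0$, and for $i\ne j$ the entry $M_{ij}$ equals the number of common neighbours of $i$ and $j$, plus $1$ when $ij\in E(G)$, so $M_{ij}\ge 0$ as well. Applying $x_{i}x_{j}\le\tfrac12(x_{i}^{2}+x_{j}^{2})$ termwise (legitimate since every $M_{ij}\ge 0$) and then using the symmetry of $M$,
\[
\sum_{i,j}M_{ij}\,x_{i}x_{j}\le\sum_{i,j}M_{ij}\,\frac{x_{i}^{2}+x_{j}^{2}}{2}=\sum_{i}x_{i}^{2}\sum_{j}M_{ij}=\sum_{i}x_{i}^{2}\,R_{i},
\]
where $R_{i}:=\sum_{j}M_{ij}$ is the $i$-th row sum of $M$. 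A direct count gives that $\sum_{j}(A^{2})_{ij}$ is the number of walks of length $2$ starting at $i$, namely $\sum_{j:\,j\sim i}d_{G}(j)$, and hence $R_{i}=d_{G}(i)+\sum_{j:\,j\sim i}d_{G}(j)$.

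It remains to bound every row sum by $2m$. Since $G$ is loopless, the neighbours of $i$ all lie in $V(G)\setminus\{i\}$, so $\sum_{j:\,j\sim i}d_{G}(j)\le\sum_{j\ne i}d_{G}(j)=2m-d_{G}(i)$, which yields $R_{i}\le 2m$ for every $i$. Combining the three displays with $\sum_{i}x_{i}^{2}=1$ gives $\mu^{2}+\mu\le 2m\sum_{i}x_{i}^{2}=2m$. Finally, since $\mu\ge 0$, the inequality $\mu^{2}+\mu-2m\le 0$ forces $\mu\le\frac{-1+\sqrt{1+8m}}{2}=-\frac12+\sqrt{2m+\frac14}$, as claimed. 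I do not expect a genuine obstacle here; the only real idea is to pass from $A$ to the nonnegative matrix $A^{2}+A$, after which the estimate $\mathbf{x}^{T}(A^{2}+A)\mathbf{x}\le\max_{i}R_{i}$ is just the standard fact that the largest eigenvalue of a nonnegative symmetric matrix is at most its maximum row sum, combined with the trivial bound $R_{i}\le 2m$.
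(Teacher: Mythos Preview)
Your proof is correct. The paper does not supply its own argument for this lemma; it simply quotes the bound from Stanley's 1987 paper and uses it as a black box. So there is nothing in the paper to compare against, and your self-contained proof more than suffices.

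For what it is worth, the route you take is essentially the standard one: reduce to the quadratic inequality $\mu^{2}+\mu\le 2m$, observe that $M=A^{2}+A$ is entrywise nonnegative, and bound $\mathbf{x}^{T}M\mathbf{x}$ by the maximum row sum of $M$, which you then show is at most $2m$ via $R_{i}=d_{G}(i)+\sum_{j\sim i}d_{G}(j)\le d_{G}(i)+(2m-d_{G}(i))$. Every step checks out, including your justification for the existence of a nonnegative unit eigenvector and the symmetry manipulation $\sum_{i,j}M_{ij}\tfrac{x_{i}^{2}+x_{j}^{2}}{2}=\sum_{i}x_{i}^{2}R_{i}$. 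One could shorten the write-up slightly by noting that $\mu^{2}+\mu$ is itself the Perron eigenvalue of the nonnegative matrix $M$ (since $|\lambda_{i}|\le\mu$ forces $\lambda_{i}^{2}+\lambda_{i}\le\mu^{2}+\mu$), and the Perron eigenvalue of a nonnegative matrix never exceeds its maximum row sum; but your Rayleigh-quotient argument achieves the same thing without needing that extra observation.
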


\begin{lemma}\label{le:sepctral-subgraph}{\normalfont (\cite{Nikiforov})} Let the numbers $c\geq0$, $k\geq2$ and $n\geq2^{4k}$, and let $G$ be a graph of order $n$. If $\delta(G)\leq k-1$ and $\mu(G)\geq \frac{k-1}{2}+\sqrt{kn-k^{2}+c}$, then there exists a subgraph $H$ of $G$ satisfying one of the following conditions:
\begin{itemize}
\item[{\rm (i)}] $\mu(H)>\sqrt{(2k+1)|H|}$;
\item[{\rm (ii)}] $|H|\geq \sqrt{n}$, $\delta(H)\geq k$ and $\mu(H)>\frac{k-1}{2}+\sqrt{k|H|-k^{2}+c+\frac{1}{2}}$.
\end{itemize}
\end{lemma}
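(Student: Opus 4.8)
The plan is to reduce to one of the two stated alternatives by peeling low-degree vertices off $G$ while monitoring the spectral radius. Set $H_{0}:=G$. As long as the current graph $H_{i}$ satisfies both $\delta(H_{i})\le k-1$ and $|H_{i}|>\lceil\sqrt{n}\,\rceil$, I would delete from $H_{i}$ a vertex $v_{i}$ of degree at most $k-1$ — chosen carefully, namely one with smallest Perron-eigenvector entry among all vertices of degree at most $k-1$ — and put $H_{i+1}:=H_{i}-v_{i}$. This yields a chain $G=H_{0}\supset H_{1}\supset\cdots\supset H_{t}$ that stops for exactly one of two reasons: either $\delta(H_{t})\ge k$ (in which case $|H_{t}|\ge\lceil\sqrt{n}\,\rceil\ge\sqrt{n}$), or $|H_{t}|=\lceil\sqrt{n}\,\rceil$ while still $\delta(H_{t})\le k-1$. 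In addition, if at any point $\mu(H_{i})>\sqrt{(2k+1)\,|H_{i}|}$, the process is aborted and $H:=H_{i}$ already satisfies (i).

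\textbf{The perturbation engine.} For each deletion step let $x$ be a unit Perron eigenvector of $H_{i}$ and let $\mu:=\mu(H_{i})$. Restricting $x$ to $V(H_{i+1})$ and using the Rayleigh quotient, together with $\sum_{w\sim v_{i}}x_{w}=\mu\,x_{v_{i}}$, gives
\[
\mu(H_{i+1})\;\ge\;\frac{\mu\bigl(1-2x_{v_{i}}^{2}\bigr)}{1-x_{v_{i}}^{2}}\;=\;\mu-\frac{\mu\,x_{v_{i}}^{2}}{1-x_{v_{i}}^{2}}.
\]
So the decrease of the spectral radius at step $i$ is controlled entirely by $x_{v_{i}}$; and since $d_{H_{i}}(v_{i})\le k-1$ we have $\mu\,x_{v_{i}}=\sum_{w\sim v_{i}}x_{w}$, which forces $x_{v_{i}}$ to be small — of order $\mu^{-1}$, that is of order $(k|H_{i}|)^{-1/2}$ while the invariant below persists.

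\textbf{The invariant and the two outcomes.} I would prove by induction on $i$ that, as long as the process has not aborted into (i),
\[
\mu(H_{i})\;\ge\;\frac{k-1}{2}+\sqrt{k\,|H_{i}|-k^{2}+c}\,,
\]
in fact with a small surplus inside the square root that accumulates as $|H_{i}|$ shrinks and eventually exceeds $\tfrac12$. The inductive step compares the per-step decrease $\mu\,x_{v_{i}}^{2}/(1-x_{v_{i}}^{2})$ supplied by the engine with the amount the right-hand side drops when $|H_{i}|$ decreases by one, namely roughly $k/\bigl(2\sqrt{k|H_{i}|}\bigr)$. Granting this: if the process stops with $\delta(H_{t})\ge k$, then $|H_{t}|\ge\sqrt{n}$ and the buffered invariant yields exactly the spectral bound of (ii); if instead it runs down to $|H_{t}|=\lceil\sqrt{n}\,\rceil$, then the total decrease $\mu(G)-\mu(H_{t})$ is negligible next to $\mu(G)$, which is of order $\sqrt{kn}$, so $\mu(H_{t})$ is of order $\sqrt{kn}$, dwarfing $\sqrt{(2k+1)\,|H_{t}|}$, which is of order $\sqrt{2k+1}\,n^{1/4}$; hence $H_{t}$ satisfies (i). (If $G$ is so sparse that its $k$-core is empty, the same chain with $H_{t}$ empty cannot arise, since $\mu(H_{i})$ would have to fall to $0$, contradicting the invariant.)

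\textbf{Main obstacle.} The delicate point is the inductive step for the invariant: one must bound the per-step spectral decrease by the per-step drop of $\frac{k-1}{2}+\sqrt{k|H_{i}|-k^{2}+c}$ for \emph{every} $k\ge2$, carrying the parameter $c$ along unchanged and with enough surplus to produce the extra $\tfrac12$ in (ii). The crude estimates $x_{v_{i}}^{2}\le d_{H_{i}}(v_{i})/\mu^{2}\le(k-1)/\mu^{2}$ combined with $\|x\|_{2}^{2}=1$ are not quite sharp enough; one has to exploit either that there are many small-degree vertices to choose from (so the minimum-eigenvector-entry choice of $v_{i}$ makes $x_{v_{i}}$ genuinely small) or that an isolated small-degree region is "thin" and hence eigenvector-light, and correspondingly to establish sharper control such as $\|x\|_{2}^{2}\ge(2-o(1))\,\mu^{2}/|H_{i}|$ — a bound that is asymptotically tight for the extremal graphs $S_{m,k}$ and that is precisely what pins down the constants $2k+1$ in (i) and the exact form of (ii). The hypothesis $n\ge2^{4k}$ is used to absorb lower-order error terms, in particular in the boundary regime where $|H_{i}|$ is no longer much larger than $\sqrt{n}$.
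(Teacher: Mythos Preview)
The paper does not give its own proof of this lemma; it is quoted as a preliminary result from Nikiforov~\cite{Nikiforov}, so there is nothing in the present paper to compare your argument against. That said, your outline---iteratively delete a vertex of degree at most $k-1$, control the spectral-radius drop via the Rayleigh quotient with the old Perron vector, and terminate either at a graph with $\delta\ge k$ (yielding~(ii)) or at a small graph whose spectral radius is still essentially $\sqrt{kn}$ (yielding~(i))---is exactly the skeleton of Nikiforov's original argument.

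Two points about the write-up. First, the displayed inequality in your ``Main obstacle'' paragraph, $\|x\|_{2}^{2}\ge(2-o(1))\,\mu^{2}/|H_{i}|$, is meaningless as written: you have normalised $\|x\|_{2}=1$, while $\mu^{2}/|H_{i}|$ is of order $k$, so the stated bound is simply false. Presumably you intended an upper bound on $x_{v_i}^{2}$ or on $\|x\|_{\infty}^{2}$. Second, and more substantively, you correctly identify the real difficulty but do not resolve it. The crude estimate $x_{v_i}^{2}\le(k-1)/\mu^{2}$ gives a per-step spectral drop of at most $(k-1)/\mu$, which has to be compared with the per-step drop of the target function, roughly $k/(2\mu)$; this comparison succeeds only when $2(k-1)\le k$, i.e.\ $k\le2$, so for every $k\ge3$ the crude bound genuinely fails. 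Your two proposed fixes do not close this gap: the ``minimum-eigenvector-entry'' selection rule buys nothing when there is only a single vertex of degree at most $k-1$, and the ``eigenvector-light region'' remark is a restatement of what must be proved rather than a proof. As written, then, the proposal is a correct strategic outline with an honest but unfilled gap at precisely the point you flag; completing it requires the detailed quantitative estimates carried out in~\cite{Nikiforov}.
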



\section{Proof of Theorem~\ref{th:spider3}}\label{sec:proof spider3}

Let $G$ be a graph with $V(G)=[n]$ and $\mu(G)\geq \mu(S_{n,k})$, where $n$ is sufficiently large. For a contradiction, suppose that $G$ contains no copy of $S$. Without loss of generality, we may assume that $S=S(t_1, t_2, \ldots, t_m)$, where $t_i=1$ for all $1\leq i\leq s$, $t_i\geq 3$ is odd for all $s+1\leq i\leq r$, and $t_{i}\geq 2$ is even for all $r+1\leq i\leq m$.

Let $f(x)=x^{2}-(k-1)x-k(n-k)$. Note that $\mu(S_{n,k})$ is the largest root of $f(x)$. Let $B=f(A(G))=A(G)^{2}-(k-1)A(G)-k(n-k)I$ and $B_{v}=\sum_{i=1}^{n}B_{iv}$ for any $v\in V(G)$. By Lemma~\ref{le:trees1} and since $\mu(G)\geq \mu(S_{n,k})$, there exists a vertex $u\in V(G)$ with $B_{u}\geq 0$. Let $L_{u}$ be the graph with vertex set $N^{1}(u)\cup N^{2}(u)$ and edge set $E(N^{1}(u))\cup E(N^{1}(u),N^{2}(u))$. Since the $(i,u)$-entry of $A(G)^2$ is the number of walks of length 2 between $i$ and $u$,
we have
\begin{equation}\label{eq:trees1}
B_{u}=\sum\limits_{x\in N^{1}(u)}d_{L_{u}}(x)-(k-2)d_{G}(u)-k(n-k).
\end{equation}

We complete the proof by first proving the following claim and then distinguishing two cases based on the degree of $u$.

\begin{claim}\label{cl:trees1} $d_{G}(u)\geq k+1$.
\end{claim}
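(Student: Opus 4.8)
My plan is to prove the claim in two stages. First I would use nothing more than the trivial bound $d_{L_u}(x)\le n-2$ on the vertices of $N^1(u)$, together with $B_u\ge 0$, to force $d_G(u)\ge k$; then I would rule out the possibility $d_G(u)=k$ by showing that equality would make $G$ contain $S_{n,k}$, and that $S_{n,k}$ already contains $S$ under the hypotheses of the theorem.

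For the first stage, observe that for every $x\in N^1(u)$ all neighbours of $x$ other than $u$ lie in $N^1(u)\cup N^2(u)$, so $d_{L_u}(x)=d_G(x)-1\le n-2$; hence $\sum_{x\in N^1(u)}d_{L_u}(x)\le d_G(u)(n-2)$. Plugging this into \eqref{eq:trees1} and using $B_u\ge 0$ gives $d_G(u)(n-2)\ge (k-2)d_G(u)+k(n-k)$, i.e. $d_G(u)(n-k)\ge k(n-k)$, so $d_G(u)\ge k$ since $n$ is large. Now suppose for contradiction that $d_G(u)=k$. Then every inequality above is tight, so $d_{L_u}(x)=n-2$, that is $d_G(x)=n-1$, for every $x\in N^1(u)$. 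Thus each vertex of $N^1(u)$ is adjacent to all other vertices of $G$, so $N^1(u)$ induces a clique $K_k$ completely joined to the remaining $n-k$ vertices; in other words $G$ contains $K_k\vee\overline{K_{n-k}}=S_{n,k}$ as a subgraph.

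The step I expect to be the crux is the last one, namely checking that $S_{n,k}$ itself contains $S$, since this is where the assumptions on $S$ must be used. The key observation is a parity argument: as $S$ has order $2k+3$, its number of edges $\sum_{i=1}^{m}t_i=2k+2$ is even, so the number $r$ of odd legs is even, and $r\ge 3$ therefore gives $r\ge 4$. I would then embed the centre of $S$ at a vertex $c$ of the $K_k$ part of $S_{n,k}$, send each leg of length $1$ to a private vertex of the independent part, and route each leg of length $t_i\ge 2$ as an alternating path that leaves $c$ into the independent part first; such a leg uses $\lfloor t_i/2\rfloor$ private vertices of $K_k\setminus\{c\}$ and $\lceil t_i/2\rceil$ private vertices of the independent part. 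The total number of $K_k$-vertices needed besides $c$ is $\sum_{i}\lfloor t_i/2\rfloor=\tfrac12\bigl((2k+2)-r\bigr)=k+1-\tfrac r2\le k-1$ (here $r\ge 4$ is exactly what is needed), and the number of independent-part vertices needed is $k+1+\tfrac r2\le n-k$ for $n$ large; since distinct legs meet only at $c$, these choices can be made pairwise disjoint. Hence $S\subseteq S_{n,k}\subseteq G$, contradicting the assumption that $G$ is $S$-free, and therefore $d_G(u)\ge k+1$.
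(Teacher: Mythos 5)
Your proposal is correct and follows essentially the same route as the paper: the trivial bound $d_{L_u}(x)\le n-2$ forces $d_G(u)\ge k$, equality at $d_G(u)=k$ forces $S_{n,k}\subseteq G$, and the parity argument (with $r\ge 3$ even, hence $r\ge 4$) shows $S\subseteq S_{n,k}$. The only cosmetic difference is in the final embedding: the paper observes that the bipartition $(V',V'')$ of $S$ has $|V'|-|V''|=r-1\ge 3$ so $|V''|\le k$, whereas you construct the embedding leg-by-leg; the two are equivalent.
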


\begin{proof}
Since $d_{L_{u}}(x)\leq n-2$ for all $x\in N^{1}(u)$, using (\ref{eq:trees1}) we have
\begin{equation}\label{eq:trees2}
0\leq B_{u}\leq d_{G}(u)(n-2)-(k-2)d_{G}(u)-k(n-k)=(d_{G}(u)-k)(n-k).
\end{equation}
Thus $d_{G}(u)\geq k$. If $d_{G}(u)\geq k+1$, then we are done. If $d_{G}(u)=k$, then inequality (\ref{eq:trees2}) implies $0=B_{u}=d_{G}(u)(n-2)-(k-2)d_{G}(u)-k(n-k)$. This implies that $d_{L_{u}}(x)=n-2$ for all $x\in N^{1}(u)$. Then $G$ contains $S_{n,k}$ as a subgraph.

Since $S$ is a bipartite graph, we may assume that $(V', V'')$ is a bipartition of $V(S)$, say with $|V'|\geq |V''|$. Then $|V'|-|V''|=r-1\geq 2$. Moreover, since $|V(S)|=2k+3$ is odd, we have that $|V'|-|V''|$ is odd, so $r-1\geq 3$. Then $|V''|\leq k$. Thus $S$ is a subgraph of $S_{n,k}$. This contradicts the assumption that $G$ contains no copy of $S$.
\end{proof}

We divide the rest of the proof into two cases: (i) $k+1\leq d_{G}(u)\leq \log n$ and (ii) $d_{G}(u)>\log n$.

\medskip\noindent
{\bf Case 1.}  $k+1\leq d_{G}(u)\leq \log n$.
\vspace{0.05cm}

\noindent
In this case, since $B_u\geq 0$ and by equality (\ref{eq:trees1}), we have
\begin{align*}
e(N^{1}(u),N^{2}(u))&=\sum\limits_{x\in N^{1}(u)}d_{L_{u}}(x)-2e(N^{1}(u))\\
&>(k-2)d_G(u)+k(n-k)-d_{G}^{2}(u)=kn-o(n).
\end{align*}
Thus $|N^{2}(u)|>\frac{e(N^{1}(u),N^{2}(u))}{|N^{1}(u)|}>(1-o(1))\frac{kn}{\log n}$.

We claim that there are $\frac{n}{2\log n}$ vertices in $N^{2}(u)$ each of which has at least $k$ neighbors in $N^{1}(u)$. Otherwise, we have $e(N^{1}(u),N^{2}(u))<\frac{n}{2\log n}|N^{1}(u)|+(|N^{2}(u)|-\frac{n}{2\log n})(k-1)<\frac{n}{2}+(k-1)n\ll kn-o(n)$, a contradiction.

Therefore, there exist $\frac{n}{2\log n}\frac{1}{{|N^{1}(u)|\choose k}}>\frac{n}{\log^{k+1} n}$ vertices in $N^{2}(u)$ which have $k$ common neighbors in $N^{1}(u)$. Without loss of generality, let $X\subseteq N^{1}(u)$ and $Y\subseteq N^{2}(u)$ with $|X|=k$ and $|Y|=\frac{n}{\log^{k+1} n}\gg 2k+2$ such that $X$ is completely joined to $Y$. Similarly as in the last paragraph of the proof of Claim~\ref{cl:trees1}, we derive that $G[X\cup Y]$ contains a copy of $S$, a contradiction.

\medskip\noindent
{\bf Case 2.} $d_{G}(u)>\log n$.
\vspace{0.05cm}

\noindent
We consider two subcases based on the number of edges between $N^{1}(u)$ and $N^{2}(u)$.

\medskip\noindent
{\bf Subcase 2.1.} $e(N^{1}(u),N^{2}(u))>kd_{G}(u)+(2k-2)|N^{2}(u)|-k(n-k)$.
\vspace{0.05cm}

\noindent
By equality (\ref{eq:trees1}) and since $B_{u}\geq0$, we have $\sum_{x\in N^{1}(u)}d_{L_{u}}(x)\geq(k-2)d_{G}(u)+k(n-k)$. Then
\begin{align}\label{eq:2.1}
e(L_{u})&=~\frac{1}{2}\Bigg(\sum_{x\in N^{1}(u)}d_{L_{u}}(x)+e(N^{1}(u),N^{2}(u))\Bigg) \nonumber\\
&>~\frac{1}{2}((k-2)d_{G}(u)+k(n-k)+kd_{G}(u)+(2k-2)|N^{2}(u)|-k(n-k)) \nonumber\\
&=~(k-1)(d_{G}(u)+|N^{2}(u)|)=(k-1)|V(L_{u})|.
\end{align}

We next claim that $L_u$ contains the following linear forest.

\begin{claim}\label{cl:linearforest1} There is a $\bigcup_{s+1\leq i\leq m}P_{t_i}$ in $L_u$ such that each $P_{t_i}$ has an end-vertex in $N^{1}(u)$.
\end{claim}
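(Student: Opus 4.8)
The plan is to extract the desired linear forest $\bigcup_{s+1\leq i\leq m}P_{t_i}$ from $L_u$ by combining the edge-count bound~\eqref{eq:2.1} with the Tur\'an-type results on linear forests (Lemmas~\ref{le:Pt}, \ref{le:kP3}, \ref{le:linearforest1}), and then to bootstrap from an arbitrary copy of this linear forest in $L_u$ to one whose path-components each have an end-vertex in $N^1(u)$, using the fact that $L_u$ is bipartite-like between the levels $N^1(u)$ and $N^2(u)$ together with the pendant edges $u$ sends down. Recall that the legs we still need to realize inside $L_u$ are $t_{s+1},\dots,t_m$ (the odd legs of length $\geq 3$ and the even legs of length $\geq 2$); the $s$ legs of length $1$ will be handled separately using the edges from $u$ itself. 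So the target forest is $F=\bigcup_{s+1\leq i\leq m}P_{t_i}$, a linear forest on $1+\sum_{i\geq s+1}(t_i)$ vertices with all components of order $\geq 3$.

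First I would compute $\sum_{s+1\leq i\leq m}\lfloor t_i/2\rfloor$ and relate it to $k$. Since $S$ has order $2k+3$, we have $1+s+\sum_{i=s+1}^m t_i = 2k+3$, and the parity bookkeeping from Claim~\ref{cl:trees1} (there are $r$ odd legs, $s$ of which have length $1$, so $r-s$ odd legs among $t_{s+1},\dots,t_r$) lets me pin down $\sum_{i\geq s+1}\lfloor t_i/2\rfloor$ exactly in terms of $k$, $r$, $s$. The point is to show this sum is at most $k$ (in fact I expect it to equal $k$ or be slightly less), so that~\eqref{eq:2.1}, which gives $e(L_u) > (k-1)|V(L_u)| \geq \bigl(\sum_{i\geq s+1}\lfloor t_i/2\rfloor - 1\bigr)|V(L_u)|$ once $|V(L_u)|$ is large, contradicts the relevant Tur\'an bound unless $L_u$ contains $F$. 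Here I must split on whether $F$ is a union of $P_3$'s only (apply Lemma~\ref{le:kP3}) or has some component that is not a $P_3$ (apply Lemma~\ref{le:linearforest1}); the hypotheses $r\geq 3$ and $2s-r\geq 2$ should guarantee we are not in a degenerate situation where $F$ is empty or a single short path, and should also ensure the ``$\lfloor t_i/2\rfloor$'' sum lands in the range where the contradiction is genuine. Note $|V(L_u)| \geq d_G(u) > \log n$, which is large, so the ``$n$ sufficiently large'' hypotheses of the Tur\'an lemmas apply with room to spare.

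Next I would upgrade an arbitrary copy of $F$ in $L_u$ to one with the required end-vertex property. Take any embedding of $F$ into $L_u$. For a component path $P_{t_i} = w_1 w_2 \cdots w_{t_i}$, if neither $w_1$ nor $w_{t_i}$ lies in $N^1(u)$, then both endpoints lie in $N^2(u)$; but $L_u$ has no edges inside $N^2(u)$, so the path alternates starting and ending in $N^2(u)$, forcing $t_i$ to be odd, and then some internal vertex $w_j\in N^1(u)$. If such a $w_j$ is adjacent (in $L_u$) to a vertex of $N^2(u)$ not already used by $F$, I can reroute; more robustly, since $|N^1(u)|$ is large (at least roughly $d_G(u)/(n-2)\cdot$ something — actually I should argue $|N^1(u)|=d_G(u)>\log n$ directly) and $F$ uses only $O(k)$ vertices, a counting/greedy argument lets me re-embed each component so that it begins at a vertex of $N^1(u)$: walk from an $N^1(u)$-vertex of the component outward, extending into the abundant unused part of $N^2(u)$ (and back into $N^1(u)$ via $u$'s neighborhood) to rebuild the path with the correct length and an $N^1(u)$ endpoint. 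The main obstacle I anticipate is precisely this re-embedding step: making sure that when a path component naturally wants both ends in $N^2(u)$, one can always slide it to put an end in $N^1(u)$ without colliding with the other components, and doing so \emph{simultaneously} for all $m-s$ components. I would handle this by processing components one at a time, each time using that $L_u$ restricted to unused vertices still has the edge-excess property (only $O(k)$ vertices and $O(k^2)$ edges are removed, negligible against the $\Theta(|V(L_u)|)$ surplus), so the structural richness needed for the reroute persists throughout.

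Finally, once Claim~\ref{cl:linearforest1} is established, the legs of length $1$ are attached at $u$: the center of $S$ is $u$, each path $P_{t_i}$ ($i\geq s+1$) becomes a leg via its $N^1(u)$-endpoint (which is adjacent to $u$), and the $s$ legs of length $1$ use $s$ further neighbors of $u$ in $N^1(u)$ disjoint from the forest — available since $d_G(u)>\log n$ while the forest uses at most $2k+2$ vertices of $N^1(u)$. This yields a copy of $S$ in $G$, the desired contradiction, completing Subcase~2.1. In summary, the logical skeleton is: edge-excess bound~\eqref{eq:2.1} $\Rightarrow$ (via the linear-forest Tur\'an lemmas) $L_u \supseteq F$ $\Rightarrow$ (via a reroute using $L_u$'s level structure) $F$ with all ends in $N^1(u)$ $\Rightarrow$ (via $u$'s large degree) the full spider $S$.
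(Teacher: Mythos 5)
Your high-level skeleton — edge-excess bound~\eqref{eq:2.1} together with the linear-forest Tur\'an lemmas to extract a suitable linear forest from $L_u$, then attach the $s$ pendant legs at $u$ — matches the paper. But the step where you force each path component to have an end-vertex in $N^1(u)$ is handled very differently from the paper, and as written it contains a genuine gap.

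You propose to first find an arbitrary copy of $F=\bigcup_{s+1\leq i\leq m}P_{t_i}$ in $L_u$ and then ``reroute'' any component with both ends in $N^2(u)$. Two problems. First, your parity claim is false: you assert that a component $w_1\cdots w_{t_i}$ with $w_1,w_{t_i}\in N^2(u)$ must have $t_i$ odd because ``the path alternates,'' but $L_u$ is \emph{not} bipartite between the levels --- it also contains the edges inside $N^1(u)$. A path such as $w_1\in N^2$, $w_2,w_3\in N^1$, $w_4\in N^2$ uses an $N^1$--$N^1$ edge and has even order with both ends in $N^2(u)$, so the parity dichotomy you lean on does not hold. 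Second, even granting a bad component, the reroute itself is not justified: to move an $N^1(u)$-vertex to an endpoint you need (say) the terminal $N^2(u)$-vertex $w_{t_i}$ to have an unused $N^1(u)$-neighbor in $L_u$, and nothing in the edge-excess bound guarantees this for the specific vertices your chosen embedding happens to occupy. Your appeal to ``the structural richness persists after deleting $O(k)$ vertices'' lets you re-find \emph{some} copy of $F$ in the leftover graph, but that new copy may again have bad components, and you would be chasing your tail without a terminating argument.

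The paper avoids both issues with a short trick: instead of finding $\bigcup_{s+1\leq i\leq m}P_{t_i}$, it finds $\bigcup_{s+1\leq i\leq m}P_{t_i+1}$ in $L_u$ (one extra vertex per component) and then simply truncates. Since every edge of $L_u$ has at least one end in $N^1(u)$, in any copy of $P_{t_i+1}=v'_iv''_i\cdots$ one of the first two vertices $v'_i,v''_i$ lies in $N^1(u)$; dropping whichever end is not needed yields a $P_{t_i}$ whose first vertex is in $N^1(u)$, with no rerouting required. The price is that the floor-sum is now $\sum\lfloor (t_i+1)/2\rfloor = k+1+(r-2s)/2\leq k$ (this is exactly where the hypothesis $2s-r\geq 2$ is used), so the Tur\'an bound from Lemma~\ref{le:linearforest1} becomes $(k-1)N$, matched tightly by~\eqref{eq:2.1}; your version $\sum\lfloor t_i/2\rfloor\leq k-1$ has more slack but leaves you stuck on the endpoint problem. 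The paper also separates out the degenerate situations (all $t_i=2$, and the single-component case $m-s=1$ where Lemma~\ref{le:linearforest1} does not apply because it needs $\ell\geq 2$) and falls back on Lemma~\ref{le:Pt}; your sketch should include these splits as well. I would recommend replacing the reroute argument with the $P_{t_i+1}$-and-truncate device, which makes the endpoint property automatic.
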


\begin{proof}
If $t_i=2$ for all $s+1\leq i\leq m$, then $s=r\geq 3$. Since $2k+3$ is odd, we have that $r$ is even. Hence, we further have $s=r\geq 4$. Thus $m-s=(2k+3-1-s)/2\leq k-1$. By Lemmas~\ref{le:Pt} and \ref{le:linearforest1}, for sufficiently large $N$, we have ${\rm ex}\left(N, (m-s)P_{2}\right)\leq(k-2)N$. By inequality~(\ref{eq:2.1}), there is
an $(m-s)P_{2}$ in $L_u$. By the definition of $L_u$, each $P_{2}$ has an end-vertex in $N^{1}(u)$.

Next, we assume that at least one $t_i$ ($s+1\leq i\leq m$) is not 2. We now show that there is a $\bigcup_{s+1\leq i\leq m}P_{t_i+1}$ in $L_u$. If $r=s$ and $m-s=m-r=1$, then $t_m+1\leq 2k+3-1-r+1\leq 2k$ since $r\geq 3$. By Lemma~\ref{le:Pt} and inequality~(\ref{eq:2.1}), there is a $P_{t_m+1}$ in $L_u$. If $m=r$ and $m-s=r-s=1$, then $s\geq 3$ since $2\leq 2s-r=2s-(s+1)$. In this case, we also have $t_m+1\leq 2k+3-1-s+1\leq 2k$. Thus there is a $P_{t_m+1}$ in $L_u$ by Lemma~\ref{le:Pt} and inequality~(\ref{eq:2.1}). If $m-s\geq 2$, then by Lemma~\ref{le:linearforest1} and since $2s-r\geq 2$, for sufficiently large $N$, we have
\begin{align*}
 &{\rm ex}\left(N, \bigcup_{s+1\leq i\leq m}P_{t_i+1}\right)\leq \left(\sum_{s+1\leq i\leq m}\left\lfloor\frac{t_i+1}{2}\right\rfloor-1\right)N \\
 = &\left(\sum_{s+1\leq i\leq r}\frac{t_i+1}{2}+\sum_{r+1\leq i\leq m}\frac{t_i}{2}-1\right)N
 =\left(\sum_{s+1\leq i\leq m}\frac{t_i}{2}+\sum_{s+1\leq i\leq r}\frac{1}{2}-1\right)N \\
 =&\left(\frac{\sum_{s+1\leq i\leq m}t_i+\sum_{s+1\leq i\leq r}1}{2}-1\right)N
 =\left(\frac{2k+3-1-s+r-s}{2}-1\right)N \\
 =&~\frac{2k+r-2s}{2}N \leq ~\frac{2k-2}{2}N=~(k-1)N.
\end{align*}
By inequality~(\ref{eq:2.1}), there is a $\bigcup_{s+1\leq i\leq m}P_{t_i+1}$ in $L_u$. For each $s+1\leq i\leq m$, let $v'_i$ and $v''_i$ be the first and second vertex along the path $P_{t_i+1}$, respectively. By the definition of $L_u$, at least one of $v'_i$ and $v''_i$ is contained in $N^{1}(u)$. Hence, there is a $\bigcup_{s+1\leq i\leq m}P_{t_i}$ in $L_u$ such that each $P_{t_i}$ has an end-vertex in $N^{1}(u)$.
\end{proof}

By Claim~\ref{cl:linearforest1}, there is a $\bigcup_{s+1\leq i\leq m}P_{t_i}$ in $L_u$ such that each $P_{t_i}$ has an end-vertex in $N^{1}(u)$. Together with vertex $u$ and $s$ additional vertices in $N^{1}(u)$, this $\bigcup_{s+1\leq i\leq m}P_{t_i}$ forms a copy of $S$ in $G$, a contradiction.

\medskip\noindent
{\bf Subcase 2.2.} $e(N^{1}(u),N^{2}(u))\leq kd_{G}(u)+(2k-2)|N^{2}(u)|-k(n-k)$.
\vspace{0.05cm}

\noindent
By equality (\ref{eq:trees1}) and since $B_{u}\geq0$, we have $\sum_{x\in N^{1}(u)}d_{L_{u}}(x)\geq(k-2)d_{G}(u)+k(n-k)$. Since $d_{G}(u)>\log n$ and $n$ is sufficiently large, we have
\begin{align}\label{eq:2.2}
e(N^{1}(u))&=~\frac{1}{2}\Bigg(\sum_{x\in N^{1}(u)}d_{L_{u}}(x)-e(N^{1}(u),N^{2}(u))\Bigg) \nonumber\\
&\geq~\frac{1}{2}((k-2)d_{G}(u)+k(n-k)-kd_{G}(u)-(2k-2)|N^{2}(u)|+k(n-k)) \nonumber\\
&=~-d_{G}(u)+k(n-|N^{2}(u)|)-k^{2}+|N^{2}(u)| \nonumber\\
&\geq~-d_{G}(u)+k(d_{G}(u)+1)-k^{2}+|N^{2}(u)| \nonumber\\
&=~(k-1)d_{G}(u)+k-k^{2}+|N^{2}(u)|>\frac{2k-3}{2}|N^{1}(u)|.
\end{align}

We next claim that $G[N^{1}(u)]$ contains the following linear forest.

\begin{claim}\label{cl:linearforest2} There is a $\bigcup_{s+1\leq i\leq m}P_{t_i}$ in $G[N^{1}(u)]$.
\end{claim}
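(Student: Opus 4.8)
The plan is to derive the claim entirely from the edge bound on $G[N^{1}(u)]$ recorded in \eqref{eq:2.2}, namely $e(N^{1}(u))>\frac{2k-3}{2}|N^{1}(u)|$, together with the fact that $|N^{1}(u)|=d_{G}(u)>\log n$ is large, so that every asymptotic Tur\'an-type bound from Section~\ref{sec:spiders-pre} is available with $N:=|N^{1}(u)|$ in the role of the number of vertices. Setting $F:=\bigcup_{s+1\leq i\leq m}P_{t_i}$, I would bound ${\rm ex}(N,F)$ from above, show ${\rm ex}(N,F)\leq\frac{2k-3}{2}N$ in all cases, and conclude from $e(G[N^{1}(u)])=e(N^{1}(u))>\frac{2k-3}{2}N\geq{\rm ex}(N,F)$ that $F\subseteq G[N^{1}(u)]$, which is the claim.

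The structural fact that makes all the estimates work is a parity observation: since $\sum_{i=1}^{m}t_i=|V(S)|-1=2k+2$ is even while exactly $r$ of the $t_i$ are odd, $r$ is even, hence $r\geq 4$ (as $r\geq 3$). I would also record $\sum_{s+1\leq i\leq m}t_i=2k+2-s$. With these in hand I would split into cases by the shape of $F$. \textbf{(a)} If $m=s$, then $F$ is the empty graph and the claim is vacuous. \textbf{(b)} If $m-s=1$, then $F=P_{t_m}$; since exactly one leg has length at least $2$, the $r$ odd legs are the $s$ legs of length $1$ and at most one more, so $s\geq r-1$ and $t_m=2k+2-s\leq 2k+3-r\leq 2k-1$, and Lemma~\ref{le:Pt} yields ${\rm ex}(N,P_{t_m})\leq\frac{t_m-2}{2}N\leq\frac{2k-3}{2}N$. \textbf{(c)} If $m-s\geq 2$ and $t_i=3$ for all $s+1\leq i\leq m$, then $m=r$ and $F=(r-s)P_3$ with $r-s=\frac12(2k+2-r)\leq k-1$ by $r\geq 4$, so $r-s\geq 2$ and Lemma~\ref{le:kP3} gives ${\rm ex}(N,(r-s)P_3)<(r-s-\frac12)N\leq(k-\frac32)N=\frac{2k-3}{2}N$. \textbf{(d)} In the remaining case $m-s\geq 2$, all $t_i\geq 2$, and some $t_i\neq 3$, so Lemma~\ref{le:linearforest1} applies; using $\lfloor t_i/2\rfloor=(t_i-1)/2$ for odd legs and $\lfloor t_i/2\rfloor=t_i/2$ for even legs, one computes $\sum_{s+1\leq i\leq m}\lfloor t_i/2\rfloor=\frac12(2k+2-s)-\frac12(r-s)=k+1-\frac r2$, whence ${\rm ex}(N,F)\leq(k-\frac r2)N\leq(k-2)N\leq\frac{2k-3}{2}N$.

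In each case ${\rm ex}(N,F)\leq\frac{2k-3}{2}N<e(N^{1}(u))$, so $G[N^{1}(u)]$ contains $F$, which proves the claim; afterwards one obtains a copy of $S$ by attaching $u$ and any $s$ vertices of $N^{1}(u)$ not used by $F$, since $u$ is adjacent to all of $N^{1}(u)$ and every $P_{t_i}$ has an endpoint in $N^{1}(u)$. None of these computations is hard; the part demanding care is the parameter bookkeeping relating $r,s,m$ to $k$ and, above all, the repeated use of $r\geq 4$---equivalently the evenness of $\sum_{i}t_i=2k+2$---to bring each Tur\'an number down to $\frac{2k-3}{2}N$. The borderline instances, case (b) with $t_m=2k-1$ and the whole of case (c), are tight, so this parity input is genuinely needed rather than cosmetic, and getting it right is the main obstacle.
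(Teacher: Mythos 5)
Your proof is correct and follows essentially the same strategy as the paper: derive the claim by comparing the lower bound $e(N^{1}(u))>\frac{2k-3}{2}|N^{1}(u)|$ from \eqref{eq:2.2} against Tur\'an-type upper bounds for $\bigcup_{s+1\leq i\leq m}P_{t_i}$ coming from Lemmas~\ref{le:Pt}, \ref{le:kP3} and \ref{le:linearforest1}. The only real difference is organizational: by invoking the parity of $\sum_i t_i=2k+2$ to upgrade $r\geq3$ to $r\geq4$, you unify the paper's two $m-s=1$ subcases into one and avoid using $2s-r\geq2$ at all within this claim, a modest but genuine streamlining of the bookkeeping that the paper does via $2s-r\geq 2$ (and, elsewhere, via the same parity fact, in its proof of Claim~\ref{cl:trees1}).
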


\begin{proof}
If $t_i=3$ for all $s+1\leq i\leq m$, then $m=r$. Since $2s-r\geq 2$, we have $s\geq r-s+2=m-s+2$. We first show that $m-s\leq k/2$. Indeed, if $m-s>k/2$, then $s\geq m-s+2>(k+4)/2$. On the other hand, $s=2k+3-1-3(m-s)< 2k+2-3k/2=(k+4)/2$, a contradiction. Hence, $m-s\leq k/2$. By Lemmas~\ref{le:Pt} and \ref{le:kP3}, we have ${\rm ex}(N, (m-s)P_{3})\leq (k/2-1/2)N\leq (2k-3)N/2$ for sufficiently large $N$. By inequality~(\ref{eq:2.2}), there is an $(m-s)P_{3}$ in $G[N^{1}(u)]$.

Next, we assume that at least one $t_i$ ($s+1\leq i\leq m$) is not 3. If $r=s$ and $m-s=m-r=1$, then $t_m\leq 2k+3-1-r\leq 2k-1$ since $r\geq 3$. By Lemma~\ref{le:Pt} and inequality~(\ref{eq:2.2}), there is a $P_{t_m}$ in $G[N^{1}(u)]$. If $m=r$ and $m-s=r-s=1$, then $s\geq 3$ since $2\leq 2s-r=2s-(s+1)$. In this case, we also have $t_m\leq 2k+3-1-s\leq 2k-1$. Thus there is a $P_{t_m}$ in $G[N^{1}(u)]$ by Lemma~\ref{le:Pt} and inequality~(\ref{eq:2.2}). If $m-s\geq 2$, then by Lemma~\ref{le:linearforest1} and since $r\geq 3$, for sufficiently large $N$, we have
\begin{align*}
 &{\rm ex}\left(N, \bigcup_{s+1\leq i\leq m}P_{t_i}\right)\leq\left(\sum_{s+1\leq i\leq m}\left\lfloor\frac{t_i}{2}\right\rfloor-1\right)N \\
 =&\left(\sum_{s+1\leq i\leq r}\frac{t_i-1}{2}+\sum_{r+1\leq i\leq m}\frac{t_i}{2}-1\right)N
 =\left(\sum_{s+1\leq i\leq m}\frac{t_i}{2}-\sum_{s+1\leq i\leq r}\frac{1}{2}-1\right)N \\
 =&\left(\frac{\sum_{s+1\leq i\leq m}t_i-\sum_{s+1\leq i\leq r}1}{2}-1\right)N
 =\left(\frac{2k+3-1-s-(r-s)}{2}-1\right)N \\
 =&~\frac{2k-r}{2}N \leq~\frac{2k-3}{2}N.
\end{align*}
By inequality~(\ref{eq:2.2}), there is a $\bigcup_{s+1\leq i\leq m}P_{t_i}$ in $G[N^{1}(u)]$.
\end{proof}

By Claim~\ref{cl:linearforest2}, there is a $\bigcup_{s+1\leq i\leq m}P_{t_i}$ in $G[N^{1}(u)]$. Together with vertex $u$ and $s$ additional vertices in $N^{1}(u)$, this $\bigcup_{s+1\leq i\leq m}P_{t_i}$ forms a copy of $S$ in $G$, a contradiction. This contradiction completes our proof of Theorem~\ref{th:spider3}.

\section{Proof of Theorem~\ref{th:brooms}}\label{sec:proof broomB22K1}

In this section, we give our proof of Theorem~\ref{th:brooms}, which confirms Conjecture~\ref{conj:trees1} for all brooms. We first prove that the result holds for $B_{2,2k+1}$.

\begin{theorem}\label{th:broomB22K1} For integers $k\geq2$ and $n$ sufficiently large, every graph $G$ of order $n$ with $\mu(G)\geq \mu(S_{n,k}^{+})$ contains $B_{2,2k+1}$ as a subgraph.
\end{theorem}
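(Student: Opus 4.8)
The plan is to reduce the spectral condition $\mu(G)\geq \mu(S_{n,k}^{+})$ to a combinatorial situation where we can apply Theorem~\ref{th:B22K1}, the Tur\'{a}n-type result for connected graphs, possibly after passing to a suitable dense subgraph. First I would handle the easy case $\delta(G)\geq k$: then, since $\mu(G)\geq \mu(S_{n,k}^{+})>\mu(S_{n,k})$, Lemma~\ref{le:sepctral-degree} (with $\delta=k$) forces $e(G)$ to be large, roughly $e(G)\geq kn-\frac{k(k+1)}{2}+1=e(S_{n,k}^{+})$; a short calculation comparing the bound in Lemma~\ref{le:sepctral-degree} with $\mu(S_{n,k}^{+})$ from \eqref{eq:spectral radius1}--\eqref{eq:spectral radius2} should give exactly this. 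Replacing $G$ by a connected component carrying enough edges (a spectral-radius maximizing component is connected and still satisfies the edge bound up to lower-order terms, which suffices for $n$ large), Theorem~\ref{th:B22K1} then yields $B_{2,2k+1}\subseteq G$.

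The substantive case is $\delta(G)\leq k-1$. Here I would invoke Lemma~\ref{le:sepctral-subgraph} with an appropriate constant $c$ chosen so that $\mu(S_{n,k}^{+})\geq \frac{k-1}{2}+\sqrt{kn-k^{2}+c}$ (using the expansions in \eqref{eq:spectral radius1} and \eqref{eq:spectral radius2}, one can take $c$ to be a fixed constant slightly larger than $\frac{k^2-2k+1}{4}$, say). This produces a subgraph $H$ of one of two types. If $H$ satisfies (i), i.e. $\mu(H)>\sqrt{(2k+1)|H|}$, then by Lemma~\ref{le:sepctral-degree2} we get $2e(H)+\frac14 > (\mu(H)+\frac12)^2 > (2k+1)|H|$, so $e(H) > (k+\frac12)|H| - \frac18$, which comfortably exceeds $\frac{(2k+2-2)|H|}{2} = k|H|$; then Lemma~\ref{le:spiders} (the partial Erd\H{o}s--S\'os result for three-legged spiders, applied with $t=2k+2$) shows that $H$ — hence $G$ — contains every $(2k+2)$-vertex spider with three legs. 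Since $B_{2,2k+1}$ is a spider with exactly three legs (two of length $1$ and one of length $2k$) on $2k+3$ vertices, I would instead apply Lemma~\ref{le:spiders} with $t=2k+3$, noting $e(H) > k|H| > \frac{(2k+3-2)|H|}{2}$ still holds for $k\geq 2$, $|H|\geq 1$; this directly gives $B_{2,2k+1}\subseteq H\subseteq G$.

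If instead $H$ satisfies (ii), then $|H|\geq \sqrt{n}$ is large, $\delta(H)\geq k$, and $\mu(H)>\frac{k-1}{2}+\sqrt{k|H|-k^{2}+c+\frac12}$. Now $H$ is itself a graph of large order with minimum degree $\geq k$ and spectral radius above the $S_{n',k}^{+}$ threshold (with $n'=|H|$), so applying Lemma~\ref{le:sepctral-degree} to $H$ with $\delta=k$ gives $e(H)\geq k|H|-\frac{k(k+1)}{2}+1 = e(S^{+}_{|H|,k})$; passing to a connected component of $H$ retaining enough edges and applying Theorem~\ref{th:B22K1} (valid since $|H|$ is large) yields $B_{2,2k+1}$ inside $H$, hence inside $G$. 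The main obstacle I anticipate is bookkeeping the constants: one must verify that the $c$ chosen in Lemma~\ref{le:sepctral-subgraph} is simultaneously (a) small enough that the hypothesis $\mu(G)\geq \frac{k-1}{2}+\sqrt{kn-k^2+c}$ follows from $\mu(G)\geq\mu(S_{n,k}^{+})$ for all large $n$, and (b) large enough (together with the extra $+\frac12$ in condition (ii)) that the resulting edge count on $H$ meets the exact threshold $e(S^{+}_{|H|,k})$ rather than falling short by a constant — and likewise that the "pass to a connected component'' step does not destroy the edge bound by more than a tolerable amount. Carefully tracking these, together with the precise form of \eqref{eq:spectral radius2}, should close the argument; none of the individual estimates is deep, but they must be aligned.
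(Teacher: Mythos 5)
Your proposal is essentially the paper's proof in structure: handle large minimum degree via Lemma~\ref{le:sepctral-degree}, otherwise invoke Lemma~\ref{le:sepctral-subgraph}, dispatch alternative (i) with Lemmas~\ref{le:sepctral-degree2} and~\ref{le:spiders}, and bootstrap alternative (ii) to a connected subgraph where Theorem~\ref{th:B22K1} applies. Two points, however, are stated in a way that would not survive being made precise.

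First, the constant $c$ must satisfy $c\le\frac{(k-1)^2}{4}$, not be ``slightly larger.'' Since $\mu(S_{n,k})=\frac{k-1}{2}+\sqrt{kn-k^2+\frac{(k-1)^2}{4}}$ and, by \eqref{eq:spectral radius2}, $\mu(S_{n,k}^+)-\mu(S_{n,k})=O(1/n)$, any fixed $c>\frac{(k-1)^2}{4}$ raises $\frac{k-1}{2}+\sqrt{kn-k^2+c}$ above $\mu(S_{n,k}^+)$ by $\Theta(n^{-1/2})$ for large $n$, so the hypothesis of Lemma~\ref{le:sepctral-subgraph} would fail. The paper takes $c=\frac{(k-1)^2}{4}$ exactly and gets the slack it needs at the other end from the extra $+\frac12$ in alternative (ii).

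Second, ``passing to a connected component retaining enough edges'' carries the wrong invariant. A component with $e\ge k\cdot(\text{order})-\frac{k(k+1)}{2}+1$ could a priori have bounded order --- for example $K_{k+2}$ has $\delta=k+1\ge k$ and meets that edge bound with equality, yet has fewer than $2k+3$ vertices --- so neither Theorem~\ref{th:B22K1} (which needs large order) nor the conclusion $B_{2,2k+1}\subseteq H'$ would follow; ruling this out from the edge count alone needs an extra argument. The clean fix, and what the paper does, is to pass instead to the component $H'$ of $H$ with $\mu(H')=\mu(H)$: this inherits $\delta(H')\ge k$, has $|H'|>\mu(H')\to\infty$, and since $|H'|\le|H|$ still satisfies $\mu(H')>\frac{k-1}{2}+\sqrt{k|H'|-k^2+c+\frac12}$; feeding this through Lemma~\ref{le:sepctral-degree} gives $e(H')\ge k|H'|-\frac{k(k+1)}{2}+1$ for the connected graph $H'$, after which Theorem~\ref{th:B22K1} finishes. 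The same device should be used at the outset if $G$ is disconnected, using that $\mu(S_{m,k}^+)$ is increasing in $m$.
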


\begin{proof}
We first consider the case that $G$ is connected. If $e(G)\geq kn-k(k+1)/2+1$, then $G$ contains $B_{2,2k+1}$ as a subgraph by Theorem~\ref{th:B22K1}. Next we assume that $e(G)\leq kn-k(k+1)/2$. If $\delta(G)\geq k$, then by Lemma~\ref{le:sepctral-degree} and equality~(\ref{eq:spectral radius1}), we have
\begin{align*}
\mu(G)&\leq~\frac{\delta(G)-1}{2}+\sqrt{2e(G)-\delta(G)n +\frac{(\delta(G)+1)^{2}}{4}}\\
&\leq~\frac{k-1}{2}+\sqrt{2e(G)-kn +\frac{(k+1)^{2}}{4}}\\
&\leq~\frac{k-1}{2}+\sqrt{2\left(kn-\frac{k(k+1)}{2}\right)-kn +\frac{(k+1)^{2}}{4}}\\
&=~\mu(S_{n,k})<\mu(S_{n,k}^{+}),
\end{align*}
which is a contradiction. Thus $\delta(G)\leq k-1$.

Let $c=(k-1)^{2}/4$. By Lemma~\ref{le:sepctral-subgraph}, there is a subgraph $H$ of $G$ satisfying either (i) $\mu(H)>\sqrt{(2k+1)|H|}$ or (ii) $|H|\geq \sqrt{n}$, $\delta(H)\geq k$ and $\mu(H)>(k-1)/2+\sqrt{k|H|-k^{2}+c+1/2}$.

If (i) holds, then by Lemma~\ref{le:sepctral-degree2}, we have $\sqrt{(2k+1)|H|}<\mu(H)\leq \sqrt{2e(H)}$. So $(2k+1)|H|<2e(H)$. Thus $G$ contains $B_{2,2k+1}$ as a subgraph by Lemma~\ref{le:spiders}.

If (ii) holds, then $H$ contains a component $H'$ with $\delta(H')\geq k$ and $\mu(H')=\mu(H)>(k-1)/2+\sqrt{k|H|-k^{2}+c+1/2}\geq (k-1)/2+\sqrt{k|H'|-k^{2}+c+1/2}$. Moreover, since $|H|\geq \sqrt{n}$ and $\mu(H)>(k-1)/2+\sqrt{k|H|-k^{2}+c+1/2}$, the order of $H'$ is large when $n$ is large.
Next we will prove $\mu(H')>(k-1)/2+\sqrt{k|H'|-(3k^{2}+2k-3)/4}>\mu\big(S_{|H'|,k}^{+}\big)$. For a contradiction, suppose that $(k-1)/2+\sqrt{k|H'|-(3k^{2}+2k-3)/4}\leq \mu\big(S_{|H'|,k}^{+}\big)$. By inequality~(\ref{eq:spectral radius2}), we have
\begin{align*}
\mu\big(S_{|H'|,k}^{+}\big)&<~\mu\big(S_{|H'|,k}\big)+\frac{1}{|H'|-k-2\sqrt{(|H'|-k)/k}}\\
&=~\frac{k-1}{2}+\sqrt{k|H'|-\frac{3k^{2}+2k-1}{4}}+\frac{1}{|H'|-k-2\sqrt{(|H'|-k)/k}}.
\end{align*}
Hence,
$$\sqrt{k|H'|-\frac{3k^{2}+2k-3}{4}}-\sqrt{k|H'|-\frac{3k^{2}+2k-1}{4}}<\frac{1}{|H'|-k-2\sqrt{(|H'|-k)/k}},$$
so
$$\frac{1/2}{\sqrt{k|H'|-(3k^{2}+2k-3)/4}+\sqrt{k|H'|-(3k^{2}+2k-1)/4}}<\frac{1}{|H'|-k-2\sqrt{(|H'|-k)/k}}.$$
Since $|H'|$ is sufficiently large, the above inequality is impossible. Hence, $\mu(H')>\mu\big(S_{|H'|,k}^{+}\big)$. By an analogous argument as in the first paragraph of the proof, we can derive a contradiction. This contradiction completes the proof for connected $G$.

If $G$ is disconnected, then $G$ contains a component $G'$ with $\mu(G')=\mu(G)$. Then $\mu(G')\geq \mu(S_{n,k}^{+})\geq \mu\big(S_{|G'|,k}^{+}\big)$. Thus we can apply the above arguments to $G'$ and complete the proof.
\end{proof}

Now we have all ingredients to present our proof of Theorem~\ref{th:brooms}.

\begin{proof}[Proof of Theorem~\ref{th:brooms}] For positive integers $s$ and $t$, let $B_{s,t}$ be a broom. Note that $B_{1,t}$ is in fact a path. In this case, the result follows from Theorem~\ref{th:path}. Hence, we may assume that $s\geq 2$ in the following arguments.

The broom $B_{s,t}$ can be viewed as a spider $S(t_1, t_2, \ldots, t_{s+1})$, where $t_1=\cdots=t_{s}=1$ and $t_{s+1}=t-1$. Such a spider has at least $s$ legs of length 1. Let $r$ be the number of odd legs. So $s\leq r\leq s+1$. We first prove Theorem~\ref{th:brooms} (a). Since $s\geq 2$, we have $2s-r\geq 2s-(s+1)\geq 1$. By Corollary~\ref{co:spider2}, the result holds. We next prove Theorem~\ref{th:brooms} (b). The case for $B_{2,2k+1}$ follows from Theorem~\ref{th:broomB22K1}. For the case $s\geq 3$, we have $2s-r\geq 2s-(s+1)\geq 2$. By Theorem~\ref{th:spider3}, the result holds.
\end{proof}


\section{Proof of Theorem~\ref{th:B22K1}}
\label{sec:proof B22K1}

Let $G$ be a connected graph of order $n$ with $e(G)\geq e(S_{n,k}^{+})=kn-k(k+1)/2+1$. We first consider the case $G=S^+_{n,k}$. Let $v_1, v_2, \ldots, v_k$ denote the $k$
vertices with degree $n-1$, $u_1, u_2, \ldots, u_{n-k-2}$ the $n-k-2$ vertices with degree $k$, and $w_1, w_2$ the two vertices with degree $k+1$. Then $w_1w_2v_1u_1v_2u_2\cdots v_ku_k$ is a path of order $2k+2$. Together with the edge $v_ku_{k+1}$, this path forms a $B_{2,2k+1}$ in $G$.

Next, we consider the case $G\neq S^+_{n,k}$. For a contradiction, suppose that $G$ contains no $B_{2,2k+1}$. Let $P=v_1v_2\cdots v_{\ell}$ be a longest path in $G$. By Lemma~\ref{le:P2K3}, we have $\ell\geq 2k+3$. Let $X=V(P)$ and $Y=V(G)\setminus V(P)$. We next state and prove two claims.

\begin{claim}\label{cl:B22K1-largel} $2k+3\leq \ell \leq 4k-1$.
\end{claim}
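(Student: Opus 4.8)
The lower bound $\ell \ge 2k+3$ is already in hand: since $G \ne S_{n,k}^{+}$ and $e(G) \ge e(S_{n,k}^{+})$, Lemma~\ref{le:P2K3} produces a copy of $P_{2k+3}$, and $P$ is a longest path. So the real content of the claim is the upper bound, and the plan is to assume for a contradiction that $\ell \ge 4k$ and then build a copy of $B_{2,2k+1}$ (or, in the very last sub-case, violate the edge-count hypothesis). The one structural observation I will lean on is that $B_{2,2k+1}=S(1,1,2k)$ is simply a path on $2k+1$ vertices (a leg of length $2k$) with two extra pendant edges attached at one of its ends; hence to find a copy it suffices to exhibit a vertex $x$, a path of exactly $2k$ edges starting at $x$, and two further neighbours of $x$, with all $2k+3$ vertices distinct.

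The first step is to show that no vertex of $Y=V(G)\setminus V(P)$ has a neighbour on $P$. Suppose $y\in Y$ and $y\sim v_i$. Since $P$ is longest, $i\ne 1$ and $i\ne\ell$ (otherwise $P$ extends through $y$), so $2\le i\le\ell-1$. If $i\le\ell-2k$, then $v_iv_{i+1}\cdots v_{i+2k}$ is a leg of length $2k$ based at $v_i$, and $v_{i-1}$ and $y$ are two further neighbours of $v_i$ lying off this leg, giving a copy of $B_{2,2k+1}$. Symmetrically, if $i\ge 2k+1$, the leg $v_iv_{i-1}\cdots v_{i-2k}$ together with the neighbours $v_{i+1}$ and $y$ gives a copy of $B_{2,2k+1}$. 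The only index not covered by one of these two cases is $\ell-2k+1\le i\le 2k$, and this range is empty precisely because $\ell\ge 4k$. Hence there is no edge between $Y$ and $V(P)$; since $G$ is connected, this forces $Y=\emptyset$, i.e.\ $P=v_1v_2\cdots v_n$ is a Hamiltonian path.

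The second step constrains the chords of this Hamiltonian path. Let $v_iv_j$ with $i<j$ and $j\ge i+2$ be a chord. If $i\ge 2k+1$, then the leg $v_iv_{i-1}\cdots v_{i-2k}$ at $v_i$, together with $v_{i+1}$ and $v_j$ (both have index larger than $i$, so they lie off the leg, and they are distinct since $j\ge i+2$), is a copy of $B_{2,2k+1}$. If instead $j\le n-2k$, then the leg $v_jv_{j+1}\cdots v_{j+2k}$ at $v_j$, together with $v_{j-1}$ and $v_i$, is a copy of $B_{2,2k+1}$. Hence every chord satisfies $i\le 2k$ and $j\ge n-2k+1$, so $G$ has at most $(2k)^2=4k^2$ chords, whence $e(G)\le(n-1)+4k^2$. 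This contradicts $e(G)\ge kn-\tfrac{k(k+1)}{2}+1$ once $n$ is large, because for $k\ge 2$ the left-hand side grows linearly in $n$ while the right-hand side is bounded. This contradiction gives $\ell\le 4k-1$, which together with $\ell\ge 2k+3$ proves the claim.

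The arguments above are short once set up, and the routine part is the index bookkeeping: checking that each displayed leg genuinely fits inside $P$ (this is exactly where the inequalities $i\le\ell-2k$, $i\ge 2k+1$, $j\le n-2k$ get used) and that the $2k+3$ named vertices of each purported $B_{2,2k+1}$ are pairwise distinct. I expect the only conceptual points worth flagging to be that the leftover sub-cases in both steps collapse precisely under the standing hypothesis $\ell\ge 4k$ — which is why the bound is $4k-1$ and not something weaker — and that connectedness is essential for upgrading ``no $Y$--$V(P)$ edges'' to ``$P$ is Hamiltonian'', without which the chord-counting step would not even get off the ground.
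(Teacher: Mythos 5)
Your proof is correct. The lower bound is exactly as in the paper (invoking Lemma~\ref{le:P2K3}). For the upper bound your argument overlaps with the paper's at the start but finishes by a genuinely different route. The paper treats $4k\le\ell\le n-1$ and $\ell=n$ as two parallel cases, in each case \emph{building} a copy of $B_{2,2k+1}$ directly: for $\ell<n$ from a single $Y$--$V(P)$ edge supplied by connectivity, and for $\ell=n$ from a single chord supplied by the edge-count hypothesis, concluding ``it is easy to check'' that one of the three segments yields the broom. You instead first prove, for all $\ell\ge 4k$, that \emph{no} $Y$--$V(P)$ edge can exist (the two leg orientations sweep out all indices $i\notin[\ell-2k+1,2k]$, a range that is empty exactly when $\ell\ge 4k$), so connectivity forces $Y=\emptyset$ and $P$ Hamiltonian; then you analyse \emph{all} chords rather than one, showing each must have $i\le 2k$ and $j\ge n-2k+1$, so there are at most $4k^2$ of them, and close with the edge-count contradiction $e(G)\le n-1+4k^2<kn-\tfrac{k(k+1)}{2}+1$. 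The index bookkeeping in both chord sub-cases is sound and all $2k+3$ vertices are verified distinct. What the counting finish buys you is that it replaces the paper's hand-waved segment case analysis with a uniform quantitative bound; what the paper's finish buys is that it exhibits the forbidden subgraph explicitly and stops one step sooner. One small wording slip: in the last inequality comparison you write that the left-hand side grows linearly ``while the right-hand side is bounded'' --- in fact both $n-1+4k^2$ and $kn-\tfrac{k(k+1)}{2}+1$ grow linearly in $n$, with slopes $1$ and $k\ge 2$ respectively, and that slope gap is what gives the contradiction for large $n$; the estimate itself is correct, just mis-described.
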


\begin{proof} We first suppose $4k\leq \ell \leq n-1$. Since $P$ is a longest path, we have $uv_1\notin E(G)$ and $uv_{\ell}\notin E(G)$ for any vertex $u\in Y$. Since $G$ is connected, there is an $i\in \{2, 3, \ldots, \ell-1\}$ and a vertex $w\in Y$ such that $v_iw\in E(G)$. Since $\ell\geq 4k$, at least one of the paths $v_1\cdots v_i$ and $v_i\cdots v_{\ell}$ has order at least $2k+1$, say $v_1\cdots v_i$. Together with the edge $v_iw$, the path $v_1\cdots v_iv_{i+1}$ forms a graph that contains $B_{2,2k+1}$ as a subgraph, a contradiction.

We next suppose $\ell=n$. Since $e(G)\geq kn-k(k+1)/2+1>n$, there exist two vertices $v_i$ and $v_j$ with $i+2\leq j$ and $\{i, j\}\neq \{1, n\}$ such that $v_iv_j\in E(G)$. Since $n$ is sufficiently large, at least one of the paths $v_1\cdots v_i$, $v_i\cdots v_j$ and $v_j\cdots v_n$ has order greater than $2k+1$. It is easy to check that there is a $B_{2,2k+1}$ in $G$, a contradiction.
\end{proof}

\begin{claim}\label{cl:B22K1-observation} Let $v$ be any vertex in $Y$. Then
\begin{itemize}
\item[{\rm (i)}] there is no edge between $v$ and $\{v_1, v_2, \ldots, v_{\ell-2k}\}\cup\{v_{2k+1}, v_{2k+2}, \ldots, v_{\ell}\}$; and
\item[{\rm (ii)}] $v$ has at most $\left\lceil\frac{4k-\ell}{2}\right\rceil$ neighbors in $X$.
\end{itemize}
\end{claim}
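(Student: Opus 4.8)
\textbf{Proof proposal for Claim~\ref{cl:B22K1-observation}.}

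The plan is to exploit that $P=v_1v_2\cdots v_\ell$ is a \emph{longest} path, so no vertex of $Y$ can be used to extend $P$ at either endpoint, combined with the standing assumption that $G$ is $B_{2,2k+1}$-free. For part (i), suppose $v\in Y$ has a neighbor $v_i$ with $i\leq \ell-2k$. Then the path $v_iv_{i+1}\cdots v_\ell$ has order $\ell-i+1\geq 2k+1$, so $P':=vv_iv_{i+1}\cdots v_\ell$ is a path of order $\geq 2k+2$ starting at $v$; since $v_i$ has degree at least $2$ on this path plus we could also walk to $v_{i-1}$ — more to the point, the longest-path hypothesis forbids $P'$ from being extendable, but what we really need is a broom. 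Here is the clean way: if $i\leq \ell-2k$, consider the path $Q=vv_iv_{i+1}\cdots v_\ell$, which has order $\ell-i+2\geq 2k+2$; the vertex $v_{i+1}$ on $Q$ has its neighbor $v_{i+2}$ already on $Q$, but the vertex $v_i$ has the extra neighbor $v_{i-1}\notin\{v,v_{i+1}\}$ when $i\geq 2$. Actually the simplest handle is symmetric to Claim~\ref{cl:B22K1-largel}: from $vv_i\in E(G)$ with $i\leq \ell-2k$, the path $v_{i}v_{i+1}\cdots v_{\ell}$ together with the pendant edge $v_iv$ and one more pendant edge — namely we need a \emph{second} pendant at the end $v_\ell$. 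Since $P$ is longest, $v_\ell$ has all its neighbors on $P$; because $\ell\leq 4k-1<n$, and using that $v_\ell$ cannot be adjacent to any $Y$-vertex, we still must produce a degree-$2$ branch vertex. The right statement to prove is: the path $v_iv_{i+1}\cdots v_\ell$ has at least $2k+1$ vertices, append the edge $v_iv$; this gives a broom $B_{1,\,\ell-i+2}$, i.e. a path, not yet $B_{2,2k+1}$. So I would instead argue via the \emph{other} endpoint: since $v$ is adjacent to $v_i$ with $i\leq\ell-2k$, and $P$ is longest, the path $vv_iv_{i-1}\cdots v_1$ cannot be extended past $v_1$, so $v_1$ has all neighbors among $\{v_2,\dots,v_\ell\}$; now the path $v_1v_2\cdots v_i v$ has order $i+1$, which is small, so that direction is the wrong one. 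The correct pendant comes from noticing that on $v_iv_{i+1}\cdots v_\ell$ (order $\geq 2k+1$) the internal vertex $v_{i+1}$ has neighbor $v_i$, but $v_\ell$'s extra neighbor: if $v_\ell v_j\in E(G)$ for some $j\leq \ell-2$ then $v_jv_{j+1}\cdots v_\ell v_{\ell-1}\cdots$ — this is getting circular, so let me commit to the following clean route.

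\textbf{Committed route for (i).} Suppose $v\in Y$ is adjacent to $v_i$ with $i\leq \ell-2k$. The sub-path $R=v_{i}v_{i+1}\cdots v_{\ell}$ has order $\ell-i+1\geq 2k+1$. Consider $R':=v_{i+1}v_{i+2}\cdots v_\ell$, of order $\geq 2k$, and attach the two pendant edges $v_{i+1}v_i$ and $v_iv$ at the end $v_{i+1}$: this is a path of order $2k$ with two extra leaves at one end, which contains $B_{2,2k-1}$ — not quite $B_{2,2k+1}$. To get the extra $2$ in the path length, use that $\ell-i+1\geq 2k+1$ more carefully: take $R'':=v_{i+2}v_{i+3}\cdots v_\ell$ only if $\ell-i-1\geq 2k+1$, i.e. $i\leq \ell-2k-2$, which is not guaranteed. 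The honest fix, and the one I expect the authors use, is to attach both branch-leaves at $v_i$ itself using $v_{i-1}$ and $v$: the path $v_iv_{i+1}\cdots v_\ell$ (order $\ge 2k+1$) with the two pendant edges $v_iv_{i-1}$ and $v_iv$ at its endpoint $v_i$ is precisely a $B_{2,\ell-i+1}\supseteq B_{2,2k+1}$, provided $i\geq 2$ so that $v_{i-1}$ exists and $v_{i-1}\notin\{v,v_{i+1}\}$. The case $i=1$ is excluded because $P$ longest forces $vv_1\notin E(G)$. By the mirror-image argument (reverse $P$), there is also no edge from $v$ to $v_j$ with $j\geq 2k+1$. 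This proves (i).

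\textbf{Route for (ii).} By (i), all neighbors of $v$ in $X$ lie in the middle segment $\{v_{\ell-2k+1},v_{\ell-2k+2},\dots,v_{2k}\}$, a set of $2k-(\ell-2k+1)+1=4k-\ell$ vertices. I then claim $v$ cannot be adjacent to two \emph{consecutive} vertices $v_j,v_{j+1}$ of $X$: if it were, then $v_jvv_{j+1}$ together with the path $v_{j+1}v_{j+2}\cdots v_\ell$ gives a path on $(\ell-j)+2$ vertices that is at least as long as $P$ and, if $j\geq 2$, strictly extendable at $v_j$ via $v_{j-1}$ unless — more simply, $v_1\cdots v_jvv_{j+1}\cdots v_\ell$ is a path on $\ell+1$ vertices, contradicting that $P$ is a longest path. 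Hence the neighbors of $v$ in $X$ form an independent set of indices (no two consecutive) inside a window of $4k-\ell$ consecutive indices, so there are at most $\lceil (4k-\ell)/2\rceil$ of them. This gives (ii).

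\textbf{Main obstacle.} The only delicate point is the bookkeeping in (i): ensuring that when we hang the two pendant edges at $v_i$, the three vertices $v_{i-1}$, $v$, $v_{i+1}$ are genuinely distinct and off the path $v_iv_{i+1}\cdots v_\ell$, which forces the small-index case $i=1$ (and symmetrically $i=\ell$) to be handled separately via the longest-path property ($Y$ is non-adjacent to the endpoints of $P$). Everything else is a short counting argument, and I would present (i) first, then derive (ii) from it together with the no-two-consecutive-neighbors observation.
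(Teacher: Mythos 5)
Your ``committed route'' is correct and coincides with the paper's own argument: for (i), the longest-path property forbids edges to $v_1$ and $v_\ell$, and for $2\le i\le \ell-2k$ the subpath $v_i v_{i+1}\cdots v_\ell$ (order $\ge 2k+1$) together with the pendant edges $v_iv_{i-1}$ and $v_iv$ gives a $B_{2,\ell-i+1}\supseteq B_{2,2k+1}$, with the symmetric argument handling $j\ge 2k+1$; for (ii), no two consecutive $v_j,v_{j+1}$ can both be neighbors of $v$ (else $v_1\cdots v_j v v_{j+1}\cdots v_\ell$ beats $P$), so the neighbors form a non-consecutive set inside the $4k-\ell$ middle indices, yielding $\lceil(4k-\ell)/2\rceil$. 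The several abandoned attempts preceding the committed route should simply be cut; the final argument is sound and essentially identical to the paper's.
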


\begin{proof} Since $P$ is a longest path, there is no edge between $v$ and $\{v_1, v_{\ell}\}$. Since $G$ contains no $B_{2, 2k+1}$, there is no edge between $v$ and $\{v_2, \ldots, v_{\ell-2k}\}\cup\{v_{2k+1}, \ldots, v_{\ell-1}\}$. Thus (i) holds. For any $1\leq i\leq \ell-1$, at most one of $vv_i$ and $vv_{i+1}$ is an edge of $G$; otherwise the path $v_1\cdots v_ivv_{i+1}\cdots v_{\ell}$ is longer than $P$. Combining with (i), the vertex $v$ has at most $\lceil(\ell-2(\ell-2k))/2\rceil=\lceil(4k-\ell)/2\rceil$ neighbors in $X$. Thus (ii) holds.
\end{proof}

If  $e(X, V(H))+e(H)\leq (k-1/2)|H|$ for each component $H$ in $G-X$, then $e(G)\leq (k-1/2)(n-\ell)+{\ell \choose 2}<kn-k(k+1)/2+1$ when $n$ is sufficiently large, a contradiction. Thus there exists a component $H$ in $G-X$ with $e(X, V(H))+e(H)> (k-1/2)|H|$. In the remainder of this proof, let $H$ denote such a component. Our aim is to show that $e(X,V(H))+e(H)\le (k-1/2)|H|$, contradicting the above. For any vertex $v\in V(H)$, let $s_v$ be the number of neighbors of $v$ in $X$, and let $p_v$ be the length of a longest path in $H$ starting at $v$ (so this longest path has $p_v+1$ vertices). We next state and prove three claims.

\begin{claim}\label{cl:B22K1-twovertices} For any vertex $v\in V(H)$, we have $vv_{\ell-2k+1}\notin E(G)$ and $vv_{2k}\notin E(G)$.
\end{claim}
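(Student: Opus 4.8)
The plan is to argue by contradiction: if some $v\in V(H)$ is adjacent to $v_{\ell-2k+1}$ (or, symmetrically, to $v_{2k}$), I will exhibit a copy of $B_{2,2k+1}$ in $G$, contradicting the standing assumption that $G$ is $B_{2,2k+1}$-free. The two statements are exchanged by the orientation-reversing automorphism of $P$ (which sends $v_i$ to $v_{\ell+1-i}$, hence $v_{\ell-2k+1}$ to $v_{2k}$, and fixes $X$, $Y$ and $H$), so it suffices to treat $vv_{\ell-2k+1}\notin E(G)$.

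So suppose $vv_{\ell-2k+1}\in E(G)$ for some $v\in V(H)$. Since $\ell\le 4k-1$ by Claim~\ref{cl:B22K1-largel}, the two sub-paths $v_1v_2\cdots v_{\ell-2k+1}$ and $v_{\ell-2k+1}v_{\ell-2k+2}\cdots v_{\ell}$ of $P$ have orders $\ell-2k+1\le 2k$ and $2k$, so neither of them alone supplies a leg of length $2k$ at $v_{\ell-2k+1}$; hence any copy of $B_{2,2k+1}=S(2k,1,1)$ we build must route a leg of length $2k$ through $v$ into $H$. Throughout I will use Claim~\ref{cl:B22K1-observation}(i), which forces every vertex of $Y$ (in particular $v$ and every vertex of $H$) to have all of its $X$-neighbours inside the middle segment $\{v_{\ell-2k+1},\dots,v_{2k}\}$, together with the fact that a vertex of $H$ has no neighbour in $Y\setminus V(H)$; these two facts repeatedly constrain which vertices are available as bristles.

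The first, easy step: if $v$ has two neighbours $w_1,w_2$ in $H$, then $v_{\ell}v_{\ell-1}\cdots v_{\ell-2k+1}v$ is a path on $2k+1$ vertices whose vertex set lies in $X\cup\{v\}$, so adding the edges $vw_1$ and $vw_2$ gives $B_{2,2k+1}$, a contradiction. Thus $v$ has exactly one neighbour $w$ in $H$ (at least one, since $H$ is connected and $|H|\ge 2$: if $|H|=1$ then $e(X,V(H))+e(H)=s_v\le\lceil\tfrac{4k-\ell}{2}\rceil\le k-1<(k-\tfrac12)|H|$, contradicting the choice of $H$). Next, concatenating $v_1\cdots v_{\ell-2k+1}v$ with a longest path of $H$ starting at $v$ yields a path on $\ell-2k+2+p_v$ vertices, so maximality of $P$ gives $p_v\le 2k-2$. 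The genuine case is therefore that $v$ is a pendant vertex of $H$ with $p_v\le 2k-2$; here I walk along $H$ from $w$. At each step, either the current vertex $u$ has two $H$-neighbours not yet used, in which case a suitably placed sub-path of $v_{\ell}\cdots v_{\ell-2k+1}v\,w\cdots u$ of order exactly $2k+1$ together with those two neighbours forms a $B_{2,2k+1}$ (using Claim~\ref{cl:B22K1-observation}(i) to see that the two neighbours and the far bristle-slots lie off the handle), or the walk dies out quickly, forcing $H$ to be a short path-like component hanging off $v$; in the latter event, bounding $s_u\le\lceil\tfrac{4k-\ell}{2}\rceil\le k-1$ for each $u\in V(H)$ via Claim~\ref{cl:B22K1-observation}(ii) yields $e(X,V(H))+e(H)\le(k-\tfrac12)|H|$, again contradicting the choice of $H$.

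The hard part will be the last case: checking that in every ``thin'' configuration of $H$ (a small near-path attached to $v$) one can still assemble a copy of $B_{2,2k+1}$ by a careful choice of centre and of the three legs among $X$, $v$ and $V(H)$ — or else the weighted count $e(X,V(H))+e(H)$ provably drops to $(k-\tfrac12)|H|$ at most. Keeping precise track of which $v_i$ are already occupied by the current handle, and therefore unavailable as bristles, is the delicate bookkeeping; the remaining ingredients (the symmetry reduction, the $d_H(v)\ge 2$ case, and the bound $p_v\le 2k-2$) are routine.
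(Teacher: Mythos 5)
There is a genuine gap, and it is exactly where you flag the ``delicate bookkeeping.'' Your final fallback estimate does not close: from $s_u\le\lceil\frac{4k-\ell}{2}\rceil\le k-1$ alone, even if $H$ is a path, you only get $e(X,V(H))+e(H)\le (k-1)|H|+(|H|-1)=k|H|-1$, and $k|H|-1\le(k-\frac12)|H|$ forces $|H|\le 2$. For any $|H|\ge 3$ the numbers do not give the contradiction, so the dichotomy ``find a $B_{2,2k+1}$ or bound the edge count'' is not actually established, and the walk argument would have to succeed in \emph{every} case with $|H|\ge 3$ --- which you have not shown. A second, related weakness: you bound $p_v\le 2k-2$ by prepending the \emph{short} side $v_1\cdots v_{\ell-2k+1}$ of $P$; the paper prepends the \emph{long} side $v_\ell\cdots v_{\ell-2k+1}$ to get the much stronger $p_v\le\ell-2k-1$, which is exactly what is needed.

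The paper's route is quite different and avoids the case analysis you are trying to set up. It first shows that the constraint propagates from the single assumed edge $uv_{\ell-2k+1}$ to \emph{all} of $H$: every $v'\in V(H)$ (not only the special vertex) satisfies $p_{v'}\le \ell-2k-1$, because a long path in $H$ starting at any $v'$, combined with the connection from $u$ into $X$ at the extremal position $v_{\ell-2k+1}$, either extends $P$ or produces a $B_{2,2k+1}$. It also shows that at most one vertex of $H$ can attain the maximum $s$-value $\lceil\frac{4k-\ell}{2}\rceil$ (two such vertices would again lengthen $P$). With these two inputs, Lemma~\ref{le:sumpath} gives $e(X,V(H))+e(H)\le\sum_{v'}(s_{v'}+p_{v'}/2)$, and the per-vertex terms are small enough that the total drops below $(k-\frac12)|H|$ precisely because $|H|\ge 2$. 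Your sketch omits both the ``all $v'\in V(H)$'' strengthening and the ``at most one maximal-$s$ vertex'' observation, and without them the weighted count cannot be made to work; patching the walk argument to exhibit a $B_{2,2k+1}$ in all remaining configurations would require exactly the kind of structural facts the paper proves instead.
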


\begin{proof} By symmetry, we only prove $vv_{\ell-2k+1}\notin E(G)$ for any $v\in V(H)$. For a contradiction, suppose that there is a vertex $u\in V(H)$ with $uv_{\ell-2k+1}\in E(G)$. Note that $|H|\geq 2$; otherwise if $|H|=1$, then $e(X, V(H))+e(H)=s_u\leq k-1\leq (k-1/2)|H|$ by Claims~\ref{cl:B22K1-largel} and \ref{cl:B22K1-observation} (ii), a contradiction. Before we derive at a contradiction, we first make some other observations.

We first observe that $p_v\leq \ell-2k-1$ for any vertex $v\in V(H)$. For any vertex $v\in V(H)$ with $vv_{\ell-2k+1}\in E(G)$, we have $p_v\leq \ell-2k-1$ in order to avoid a $P_{\ell+1}$. For any vertex $v\in V(H)$ with $vv_{\ell-2k+1}\notin E(G)$, if $p_v\geq \ell-2k$, then we may assume that $vw_1w_2\cdots w_{\ell-2k}$ is a path in $H$. In order to avoid a $P_{\ell+1}$ or a $B_{2, 2k+1}$, we have $u\notin \{v, w_1, w_2, \ldots, w_{\ell-2k}\}$, and there is no path in $H$ connecting $u$ and the path $vw_1w_2\cdots w_{\ell-2k}$. This contradicts the fact that $H$ is connected.

We next observe that there is at most one vertex in $H$ which is adjacent to $\lceil(4k-\ell)/2\rceil$ vertices in $X$. Otherwise, suppose that there are two vertices $v', v''\in V(H)$ with $s_{v'}=s_{v''}=\lceil(4k-\ell)/2\rceil$. This implies that $v'$ (resp., $v''$) is either adjacent to both $v_{\ell-2k+1}$ and $v_{\ell-2k+3}$ or adjacent to both $v_{\ell-2k+2}$ and $v_{\ell-2k+4}$. Since $H$ is connected, there is a path $Q$ in $H$ connecting $v'$ and $v''$. It is easy to check that $G[X\cup V(Q)]$ contains a path longer than $P$, a contradiction.

Therefore, by Lemma~\ref{le:sumpath} and Claim~\ref{cl:B22K1-observation} (ii), we have
\begin{align*}
&~e(X, V(H))+e(H)\leq~\sum_{v\in V(H)}\left(s_v+\frac{p_v}{2}\right)\\
\leq&~\left(\left\lceil\frac{4k-\ell}{2}\right\rceil+\frac{\ell-2k-1}{2}\right)+(|H|-1)\left(\left\lceil\frac{4k-\ell}{2}\right\rceil-1+\frac{\ell-2k-1}{2}\right)\\
\leq&~\left(k-\frac{1}{2}\right)|H|,
\end{align*}
a contradiction.
\end{proof}

\begin{claim}\label{cl:B22K1-svn0} For any vertex $v\in V(H)$, if $s_v\neq 0$, then $s_v+\frac{p_v}{2}\leq \min\left\{k-\frac{1}{2}, \frac{\ell-1}{2}-\frac{p_v}{2}\right\}$.
\end{claim}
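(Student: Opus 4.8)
The plan is to establish the two parts of the bound $s_v+\frac{p_v}{2}\leq k-\frac{1}{2}$ and $s_v+\frac{p_v}{2}\leq \frac{\ell-1}{2}-\frac{p_v}{2}$ separately, for an arbitrary vertex $v\in V(H)$ with $s_v\geq 1$. For the first inequality, fix a vertex $v_j\in X$ that is adjacent to $v$. Since $v$ lies in a component of $G-X$, one can attach the longest path in $H$ starting at $v$ to $v_j$, obtaining a path on $p_v+1$ vertices dangling from $v_j$. Together with the segments $v_1\cdots v_j$ and $v_j\cdots v_\ell$ of $P$, this produces structures that, if $s_v$ and $p_v$ were too large, would contain either a path longer than $P$ or a copy of $B_{2,2k+1}$; pushing this through should give $2s_v+p_v\leq 2k-1$. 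More carefully, by Claim~\ref{cl:B22K1-observation}(i) all neighbors of $v$ in $X$ lie among $v_{\ell-2k+1},\dots,v_{2k}$, and by Claim~\ref{cl:B22K1-twovertices} even the endpoints $v_{\ell-2k+1}$ and $v_{2k}$ of that window are excluded, so the neighbors lie strictly inside; combining the no-consecutive-neighbors property from Claim~\ref{cl:B22K1-observation}(ii) with the constraint that a path of length $p_v$ hanging off some $v_j$ must not create a $B_{2,2k+1}$ with either end of $P$ should yield the first bound.

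For the second inequality $s_v+\frac{p_v}{2}\leq \frac{\ell-1}{2}-\frac{p_v}{2}$, i.e. $2s_v+2p_v\leq \ell-1$, the idea is a counting/packing argument along the path $P$. A neighbor $v_j$ of $v$ together with the $p_v$-vertex tail in $H$ can be grafted in: replacing, say, the initial segment of $P$ by $(\text{tail})\,v_j v_{j+1}\cdots v_\ell$ gives a path, and maximality of $P$ forces $j-1+\text{(something)}\leq$ the length already used, so roughly each neighbor of $v$ "costs" a stretch of $P$ of length about $2p_v$ around it that cannot be reused, and these stretches are disjoint enough that their total length is at most $\ell-1$. I would make this precise by ordering the neighbors $v_{j_1}<v_{j_2}<\cdots<v_{j_{s_v}}$ of $v$ in $X$ and arguing that between consecutive neighbors (and between the extreme neighbors and the ends $v_1,v_\ell$) there must be enough room to accommodate the detour through $H$ without lengthening $P$, which translates into $j_{i+1}-j_i$ being bounded below, and summing these gaps.

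Once both inequalities are in hand, the conclusion is just taking the minimum, which is exactly the statement. The main obstacle I expect is the bookkeeping in the second inequality: one has to handle carefully whether the $p_v$-path in $H$ can reach simultaneously toward two different neighbors $v_{j_i}, v_{j_{i+1}}$ of $v$ in $X$ — which would let one build a long path or a broom through the cycle-like structure $v_{j_i}\cdots v_{j_{i+1}}$ closed off through $v$ and the tail — versus the degenerate cases where $|H|=1$ (so $p_v=0$) or where $v$ has a neighbor very close to an end of $P$. The first inequality is more robust since it only uses the two ends of $P$ and Claims~\ref{cl:B22K1-largel}, \ref{cl:B22K1-observation}, \ref{cl:B22K1-twovertices}, but even there one must be slightly careful that grafting the $H$-tail onto $v_j$ and extending along $P$ in the longer direction genuinely yields $B_{2,2k+1}$ and not merely a long path, which is where the case distinction $\ell\le 4k-1$ from Claim~\ref{cl:B22K1-largel} gets used.
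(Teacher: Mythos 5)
Your two-part outline is close to the right shape, but the mechanism you invoke for the first inequality is misidentified, and the worry you flag at the end is pointing in the wrong direction. In the paper, the constraint that pushes the dangling $p_v$-tail away from the ends of $P$ comes from avoiding $P_{\ell+1}$, not from avoiding $B_{2,2k+1}$: with $v_{i_1}<\cdots<v_{i_{s_v}}$ the neighbors of $v$ on $P$ and $w_0=v,w_1,\dots,w_{p_v}$ a longest path in $H$ from $v$, the path $w_{p_v}\cdots w_1 v\,v_{i_1}v_{i_1+1}\cdots v_\ell$ has $p_v+\ell-i_1+2$ vertices, so maximality of $P$ forces $i_1\geq p_v+2$, and dually $i_{s_v}\leq \ell-p_v-1$. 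The broom plays no direct role in Claim~\ref{cl:B22K1-svn0}; its work has already been done in Claims~\ref{cl:B22K1-observation} and~\ref{cl:B22K1-twovertices}, which confine the neighbors of $v$ to the interior window $\{v_{\ell-2k+2},\dots,v_{2k-1}\}$. Your closing concern that ``grafting yields only a long path rather than a $B_{2,2k+1}$'' therefore has it backwards: a path longer than $P$ is precisely the contradiction one is after, so there is nothing to repair. Likewise, the tail never needs to reach two neighbors of $v$ simultaneously; only the two inequalities on $i_1$ and $i_{s_v}$ are used.

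Once the mechanism is corrected, the claim falls out of a single batch of inequalities: the window gives $\ell-2k+2\leq i_1$ and $i_{s_v}\leq 2k-1$; maximality gives $i_1\geq p_v+2$ and $i_{s_v}\leq\ell-p_v-1$; the no-two-consecutive property gives $i_{s_v}-i_1\geq 2(s_v-1)$. Pairing $i_1\geq p_v+2$ with $i_{s_v}\leq\ell-p_v-1$ yields $2s_v-2\leq\ell-2p_v-3$, i.e.\ $2(s_v+p_v)\leq\ell-1$, the second bound, exactly as you anticipated. For the first bound the paper uses a small case split: if $p_v\leq\ell-2k$, the window and the no-consecutive property give $s_v\leq\lceil(4k-\ell-2)/2\rceil$, whence $s_v+p_v/2\leq\lceil(4k-\ell-2)/2\rceil+(\ell-2k)/2\leq k-\tfrac12$; if $p_v\geq\ell-2k+1$, then $(\ell-1)/2-p_v/2<k-\tfrac12$ and the first bound is subsumed by the second. (In fact, pairing $i_1\geq p_v+2$ with $i_{s_v}\leq 2k-1$ gives $2s_v+p_v\leq 2k-1$ directly, confirming the inequality you guessed without a case split -- but the route to it is $P_{\ell+1}$-avoidance, not a constructed broom.) So the gap is not in the arithmetic you are aiming for but in the device you propose to reach it.
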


\begin{proof} By Claim~\ref{cl:B22K1-observation} (i) and Claim~\ref{cl:B22K1-twovertices}, we have $s_v\leq \lceil(4k-\ell-2)/2\rceil$. Note that $k-1/2\leq (\ell-1)/2-p_v/2$ when $p_v\leq \ell-2k$, and $(\ell-1)/2-p_v/2< k-1/2$ when $p_v\geq \ell-2k+1$. If $p_v\leq \ell-2k$, then $s_v+p_v/2\leq \lceil(4k-\ell-2)/2\rceil+(\ell-2k)/2\leq k-1/2$. Next, we show that $2(s_v+p_v)\leq \ell-1$, which implies $s_v+p_v/2=s_v+p_v-p_v/2\leq (\ell-1)/2-p_v/2$ and thus completes the proof. Let $v_{i_1}, \ldots, v_{i_{s_v}}$ be all the neighbors of $v$ in $X$, where $i_1<\cdots<i_{s_v}$. In order to avoid a $P_{\ell+1}$ in $G$, we have $i_{s_v}\leq \ell-(p_v+1)$ and $i_1\geq p_v+2$. Thus $i_{s_v}-i_1\leq \ell-2p_v-3$. On the other hand, we have $\lceil(i_{s_v}-i_1+1)/2\rceil\geq s_v$. Thus $i_{s_v}-i_1\geq 2s_v-2$. Hence, $2s_v-2\leq \ell-2p_v-3$. This implies that $2(s_v+p_v)\leq \ell-1$.
\end{proof}

\begin{claim}\label{cl:B22K1-2k+3} If $\ell=2k+3$, then for any vertex $v\in V(H)$, we have $vv_{k+2}\notin E(G)$.
\end{claim}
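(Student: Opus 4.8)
The plan is to argue by contradiction, in the style of the proof of Claim~\ref{cl:B22K1-twovertices}: assume $uv_{k+2}\in E(G)$ for some $u\in V(H)$, and deduce $e(X,V(H))+e(H)\le(k-\frac12)|H|$, contradicting the choice of $H$. First dispose of $k=2$: there $v_{k+2}=v_4=v_{\ell-2k+1}$, and $uv_4\notin E(G)$ already by Claim~\ref{cl:B22K1-twovertices}; so assume $k\ge3$. (It is also worth noting at the outset that $uv_{k+1},uv_{k+3}\notin E(G)$, since otherwise $v_1\cdots v_{k+1}uv_{k+2}v_{k+3}\cdots v_\ell$ would be a path on $\ell+1$ vertices, contradicting maximality of $P$.)

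By Lemma~\ref{le:sumpath} together with $e(X,V(H))=\sum_{v\in V(H)}s_v$, it is enough to show $s_v+\frac{p_v}{2}\le k-\frac12$ for every $v\in V(H)$. When $s_v\ne0$ this is exactly Claim~\ref{cl:B22K1-svn0} (for $\ell=2k+3$ the minimum there is at most $k-\frac12$), so the whole difficulty is to handle vertices $v$ with $s_v=0$, for which I must prove $p_v\le 2k-1$.

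Fix such a $v$ and let $W=vw_1\cdots w_{p_v}$ be a longest path of $H$ starting at $v$. Using connectivity of $H$, pick a shortest path $Q$ of $H$ from $u$ to $V(W)$, ending at $w_j$, so that $V(Q)\cap V(W)=\{w_j\}$ and $|V(Q)|=1$ precisely when $u=w_j\in V(W)$. Splicing the sub-path $v_1v_2\cdots v_{k+2}$ of $P$, the edge $v_{k+2}u$, the path $Q$, and the longer of the two arms of $W$ emanating from $w_j$ gives a path of $G$ on $(k+2)+|V(Q)|+\max\{j,\,p_v-j\}$ vertices; since $P$ is longest, this is at most $\ell=2k+3$. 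If $u\notin V(W)$ then $|V(Q)|\ge2$, and $\max\{j,p_v-j\}\ge\lceil p_v/2\rceil$ forces $p_v\le 2k-2$, as needed. If $u\in V(W)$, the inequality only yields $\max\{j,p_v-j\}\le k$, hence $p_v\le 2k$; but $p_v=2k-1$ already gives $p_v/2=k-\frac12$, so the one outstanding case is $p_v=2k$ with $u=w_k$ the midpoint of $W$.

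The hard part will be closing exactly this last case. There $s_v=0$ but $\frac{p_v}{2}=k$, and running the same splicing argument from the other end $w_{2k}$ of $W$ (which must also satisfy $s_{w_{2k}}=0$, as $s_{w_{2k}}\ge1$ would force $p_{w_{2k}}\le2k-3$ by Claim~\ref{cl:B22K1-svn0}) shows $p_{w_{2k}}=2k$ too; thus the blunt estimate $e(X,V(H))+e(H)\le\sum_v(s_v+\frac{p_v}{2})$ exceeds the target by at least $1$, and — as the splicing now only ever produces paths on exactly $2k+3$ vertices, with no visible copy of $B_{2,2k+1}$ — no contradiction comes for free from $P$ alone. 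To recover the deficit I would analyse $W$ inside $H$ more carefully: the splicing forces $p_u=k$, hence $s_u\le\frac{k-1}{2}$ by Claim~\ref{cl:B22K1-svn0}; moreover Claim~\ref{cl:B22K1-svn0} makes $s_{w_i}=0$ whenever $\max\{i,2k-i\}>2k-3$; feeding these local constraints, the degree cap of Claim~\ref{cl:B22K1-observation}, and the substantial slack in Lemma~\ref{le:sumpath} along the long path $W$ back into the sum should restore $e(X,V(H))+e(H)\le(k-\frac12)|H|$. This final bookkeeping is, I expect, the technical heart of the claim.
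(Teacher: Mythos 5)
Your overall strategy --- reduce to bounding $\sum_{v\in V(H)}(s_v+\frac{p_v}{2})$ via Lemma~\ref{le:sumpath} and establish $s_v+\frac{p_v}{2}\le k-\frac12$ for each $v\in V(H)$ --- departs from the paper's argument and leaves a genuine gap that you acknowledge but do not close. As you correctly diagnose, when $s_v=0$ and the longest $H$-path out of $v$ has length $2k$ with $u$ at its midpoint, the summand equals $k$, overshooting by $\frac12$, and the splicing construction produces only an $\ell$-vertex path with no forced $B_{2,2k+1}$. The slack you hope to recover from $u$ is $\left(k-\frac12\right)-\left(s_u+\frac{p_u}{2}\right)=\frac{k-3}{2}$ (since $p_u=k$ together with Claim~\ref{cl:B22K1-svn0} forces $s_u=1$), which vanishes at $k=3$, is only $\frac12$ at $k=4$, and in any case can be outweighed when several deficit vertices occur (e.g.\ both ends of a $2k$-path through $u$). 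So the ``final bookkeeping'' you postpone is not a formality --- it is exactly the step that fails. You also fold $k=3$ into the general argument, whereas it needs a separate treatment.

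The paper sidesteps the problem with a different decomposition: it deletes $u$, applies Lemma~\ref{le:sumpath} inside each component $H'$ of $H-u$, and proves $s_v+\frac{p'_v}{2}+\mathds{1}_{(vu\in E(G))}\le k-\frac12$ for every $v\in V(H')$, where $p'_v$ is the length of a longest path in $H'$ (not in $H$). Removing $u$ is precisely what kills your bad case: a $2k$-path through $u$ splits into two arms of length at most $k$, so one shows $p'_v\le 2k-2$ in general and that $p'_v\ge k$ already forces $vu\notin E(G)$, whence the corrected per-vertex bound holds everywhere and the sum closes cleanly (after a short separate count for $k=3$). This component decomposition around $u$, with path lengths measured in $H-u$ rather than in $H$, is the key idea missing from your proposal; without it, or an equally sharp replacement, the argument is incomplete.
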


\begin{proof} The case $k=2$ follows from Claim~\ref{cl:B22K1-twovertices}. We next consider the case $k= 3$. In this case, for any vertex $v\in V(H)$, $v_5$ is the only possible neighbor of $v$ in $X$. Let $V'\colonequals\{v\in V(H)\colon\, vv_5\in E(G)\}$ and $V''\colonequals V(H)\setminus V'$. In order to avoid a $B_{2, 2k+1}$, each vertex $v\in V'$ (resp., $v\in V''$) has at most two neighbors in $V'$ (resp., $V(H)$). Thus $e(X, V(H))+e(H)\leq e(X, V')+e(V')+e(V',V'')+e(V'')\leq |V'|+(\sum_{v\in V'}2)/2+\sum_{v\in V''}2=2|H|< (k-1/2)|H|$, a contradiction. In the following arguments, we may assume that $k\geq 4$.

For a contradiction, suppose that there is a vertex $u\in V(H)$ with $uv_{k+2}\in E(G)$. Let $\mathcal{H}$ be the set of all components of $H-u$ and let $H'\in \mathcal{H}$. For any vertex $v\in V(H')$, let $p'_v$ be the length of a longest path in $H'$ starting at $v$. Then $p'_v\leq p_v$. In the following, we show that $s_v+p'_v/2+\mathds{1}_{(vu\in E(G))}\leq k-1/2$ for any vertex $v\in V(H')$, where $\mathds{1}_{(vu\in E(G))}\colonequals 1$ if $vu\in E(G)$, and $\mathds{1}_{(vu\in E(G))}\colonequals 0$ if $vu\notin E(G)$.

We first consider a vertex $v\in V(H')$ with $s_v\neq 0$. By Claim~\ref{cl:B22K1-svn0}, we have $s_v+p'_v/2\leq s_v+p_v/2\leq \min\left\{k-1/2, k+1-p_v/2\right\}$. If $vu\notin E(G)$ (resp., $p_v\geq 5$), then $s_v+p'_v/2+\mathds{1}_{(vu\in E(G))}\leq s_v+p_v/2 \leq k-1/2$ (resp., $s_v+p'_v/2+\mathds{1}_{(vu\in E(G))}\leq k+1-5/2+1 \leq k-1/2$). Now we consider the case $vu\in E(G)$ and $p_v\leq 4$. In order to avoid a $P_{2k+4}$, there is no edge between $v$ and $\{v_{k}, v_{k+1}, v_{k+3}, v_{k+4}\}$. Combining with Claim~\ref{cl:B22K1-observation} (i) and Claim~\ref{cl:B22K1-twovertices}, we have $s_v\leq 2\lceil(k-5)/2\rceil+1\leq k-3$. Moreover, if we further have $p_v=4$, then $vv_5, vv_{2k-1}\notin E(G)$ for avoiding a $P_{2k+4}$. Thus $s_v\leq \max\{2\lceil(k-6)/2\rceil+1, 1\}\leq \max\{k-4, 1\}$ in this case. Hence, if $k\geq 5$,
then $s_v+p'_v/2+1\leq s_v+p_v/2+1\leq \max\{k-3+3/2, k-4+4/2\}+1=k-1/2$. If $k=4$, then $p_u\leq 4$ for avoiding a $P_{2k+4}$, and thus $p'_v\leq 3$. Hence, if $k=4$, then $s_v+p'_v/2+1\leq 1+3/2+1=k-1/2$.

We next consider a vertex $v\in V(H')$ with $s_v=0$. We now show that $p'_v\leq 2k-2$. Otherwise, suppose that $Q$ is a path in $H'$ of length $2k-1$ starting at $v$. Let $w$ be the other end-vertex of $Q$. Since $H$ is connected and $H'$ is a component of $H-u$, there is a path $Q'$ connecting $u$ and $Q$. We choose such a path $Q'$ with the minimum length, that is, $|V(Q)\cap V(Q')|=1$, say $V(Q)\cap V(Q')=\{x\}$. Then one of the paths $vQx$ and $xQw$ has length at least $k$, say $vQx$. Then $v_1\cdots v_{k+2}uQ'xQv$ is a path of length at least $2k+3$. This contradiction implies $p'_v\leq 2k-2$. In order to avoid a $P_{2k+4}$, if $p'_v\geq k$, then $vu\notin E(G)$. Thus $s_v+p'_v/2+\mathds{1}_{(vu\in E(G))}\leq 0+\max\{(2k-2)/2, (k-1)/2+1\}\leq k-1/2$.

From the above arguments and by Lemma~\ref{le:sumpath}, we have
$e(X, V(H))+e(H)= e(\{u\},X)+\sum_{H'\in \mathcal{H}}\left(e(V(H'), X)+e(H')+e(V(H'), \{u\})\right)\leq s_u+\sum_{H'\in \mathcal{H}}\sum_{v\in V(H')}(s_v+p'_v/2+\mathds{1}_{(vu\in E(G))})$ $\leq k-2+\sum_{H'\in \mathcal{H}}(k-1/2)|V(H')|< (k-1/2)|H|$,
a contradiction.
\end{proof}

Let $A\colonequals \{v\in V(H)\colon\, s_v\neq 0\}$, $B\colonequals \{v\in V(H)\colon\, s_v=0 \text{ and } p_v\leq 2k-1\}$ and $C\colonequals \{v\in V(H)\colon\, s_v=0 \text{ and } p_v\geq 2k\}$. Then for any vertex $v\in A\cup B$, we have $s_v+p_v/2 \leq k-1/2$ by Claim~\ref{cl:B22K1-svn0}. The next claim deals with vertices in $C$.

\begin{claim}\label{cl:B22K1-degree} If $C\neq \emptyset$, then for any vertex $v\in C$, the degree of $v$ in $H$ is at most $k-1$.
\end{claim}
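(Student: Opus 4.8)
If $C\neq\emptyset$, then for any vertex $v\in C$, the degree of $v$ in $H$ is at most $k-1$.

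The plan is to argue by contradiction: assume some $v\in C$ has $d_H(v)\ge k$, and use the long path guaranteed by $v\in C$ (length $p_v\ge 2k$ in $H$, starting at $v$) together with $k$ neighbours of $v$ in $H$ to build a copy of $B_{2,2k+1}$, contradicting the standing assumption that $G$ is $B_{2,2k+1}$-free. First I would fix a longest path $R=vw_1w_2\cdots w_{p_v}$ in $H$ starting at $v$, so $R$ has at least $2k+1$ vertices. Since $d_H(v)\ge k$, the vertex $v$ has at least $k$ neighbours in $H$; the key point is that I want to find two neighbours of $v$ in $H$ that are \emph{not} among the first $2k$ vertices $w_1,\dots,w_{2k-1}$ of $R$ that I intend to use as the ``handle'' path, so that I can attach them as the two extra broom-bristles at an endpoint.

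The main step is the following counting/structure argument. Take the sub-path $vw_1\cdots w_{2k-1}$ of $R$; this is a $P_{2k}$ in $H$ with $v$ as one endpoint. I need one more vertex to extend it to a $P_{2k+1}$ and then two pendant vertices at the far end to get $B_{2,2k+1}$; but it is cleaner to use $v$'s own neighbourhood. Concretely, because $p_v\ge 2k$, the path $R$ already contains $w_{2k}$, so $vw_1\cdots w_{2k}$ is a $P_{2k+1}$ in $H$ with $v$ an endpoint. Now $v$ has $d_H(v)\ge k\ge 2$ neighbours in $H$. At most one of them, $w_1$, lies on the ``forward'' portion we are using; but $v$ may have other neighbours on $R$ (the $w_i$). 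The point is that among $v$'s $\ge k$ neighbours, at least two are distinct from $w_{2k}$ (trivially, since $k\ge 2$ and the $P_{2k+1}$ uses $w_{2k}$ as its non-$v$ endpoint), and we can choose two neighbours $x,y$ of $v$ with $x,y\notin\{w_1,w_2,\dots,w_{2k}\}$ \emph{unless} almost all of $v$'s neighbours lie on $\{w_1,\dots,w_{2k}\}$ — in which case a chord $vw_i$ with $i\ge 3$ together with the path $R$ produces either a path in $H$ longer than $p_v$ (contradicting maximality of $R$) or directly a $B_{2,2k+1}$ inside $H$ by rerouting. I would carry this out by a short case analysis on how many of $v$'s neighbours lie on the initial segment $w_1\cdots w_{2k}$: if two neighbours $x,y$ lie off this segment, then $\{vx,vy\}$ are the two bristles and $vw_1\cdots w_{2k}$ minus its last vertex, re-indexed, gives the $P_{2k+1}$ handle rooted at $v$, yielding $B_{2,2k+1}$; if not, a chord $vw_i$ with $i\ge 3$ lets us cyclically reroute $R$ to a strictly longer path starting at $v$ or from $v_{\ell}$-side of $X$ (using connectivity of $G$ and that $v\in C$ has $s_v=0$, so the only route out of $H$ is through $X$ but that gives a path longer than $P$), again a contradiction.

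I expect the main obstacle to be the bookkeeping in the case where many of $v$'s neighbours cluster on the initial segment $w_1\cdots w_{2k}$ of the longest path $R$: there one must be careful that the rerouting genuinely produces either a $B_{2,2k+1}$ or a path strictly longer than $p_v$ (in $H$) or strictly longer than $\ell$ (in $G$, after splicing through a neighbour $v_i\in X$ of the path $R$ — but this cannot happen since $v\in C$ means $s_v=0$ and, by connectivity, the component $H$ meets $X$, and Claim~\ref{cl:B22K1-observation}(i) together with $|R|\ge 2k+1$ forces a too-long path in $G$). I would state the needed rerouting as a small self-contained sub-claim (``a $P_{2k+1}$ in $H$ rooted at $v$ with $d_H(v)\ge k$ forces $B_{2,2k+1}\subseteq H$'') and prove it once, then apply it. The degree bound $k-1$ then drops out immediately, and this is exactly the inequality one needs to fold vertices of $C$ into the final edge count $e(X,V(H))+e(H)\le (k-\tfrac12)|H|$ that contradicts the choice of $H$.
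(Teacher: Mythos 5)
The approach you sketch has a genuine gap, and it is located exactly where you anticipated difficulty. Your central sub-claim --- ``a $P_{2k+1}$ in $H$ rooted at $v$ with $d_H(v)\geq k$ forces $B_{2,2k+1}\subseteq H$'' --- is false. Take $H$ to be the path $vw_1w_2\cdots w_{2k}$ together with the chords $vw_3,vw_5,\ldots,vw_{2k-1}$. Then $d_H(v)=k$, the longest path in $H$ starting at $v$ has length exactly $2k$ (so $v$ would lie in $C$ once $s_v=0$), yet $|V(H)|=2k+1<2k+3$, so $H$ cannot contain any copy of $B_{2,2k+1}$, and there is no longer path from $v$ to reroute to. Your fallback (``a chord $vw_i$ with $i\geq 3$ lets us cyclically reroute $R$ to a strictly longer path starting at $v$'') also fails on this example: the chords $vw_i$ do not produce any path in $H$ starting at $v$ of length $>p_v$, because $w_1$'s only neighbors are $v$ and $w_2$, so every reroute through a chord dead-ends. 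The hand-off to ``a path longer than $P$ in $G$'' is asserted but not constructed, and it is precisely this construction that carries the whole burden.

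The paper's proof of Claim~\ref{cl:B22K1-degree} does not try to locate $B_{2,2k+1}$ inside $H$ at all. Instead, it fixes the $(2k+1)$-vertex initial segment $W=\{w_1,\ldots,w_{2k}\}$ of the long path from $v$ in $H$, and then exploits $s_v=0$ together with connectivity of $G$ to produce a \emph{minimal} path $Q=u_0u_1\cdots u_t$ from $X$ into $W$ with $u_t=w_i$. Splicing $Q$ with the long side of $P$ at $u_0$ yields a path $v_1Pu_0Qw_iw_{i-1}\cdots w_j$ that already has $\geq 2k+1$ vertices for every $j\leq j_0:=\lceil(\ell+1)/2\rceil+t+i-2k-1$. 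The broom is then built with its head at $w_j$ (bristles $w_jw_{j-1}$ and $w_jv$), not at $v$, which is the structural move your proposal lacks. Forbidding this broom for all $2\leq j\leq j_0$, and forbidding a second bristle at $v$ off the handle, pins $v$'s neighbours into the three sets $\{w_1\}$, $\{w_{j_0+1},\ldots,w_i\}$, and $\{u_1,\ldots,u_{t-1}\}$ plus one exceptional vertex, and the count $i-j_0+t+1=2k+2-\lceil(\ell+1)/2\rceil\leq k-1$ closes the argument --- but only after a separate step (via Claim~\ref{cl:B22K1-2k+3}) establishes $\ell\geq 2k+4$, which your proposal also omits. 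In short: the degree bound here is an artefact of how $H$ is tethered to the outside path $P$, not a property of $H$ alone, and any correct proof must route through $X$.
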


\begin{proof} By the definition of $C$, we may assume that $w_0w_1w_2\cdots w_{2k}$ is a path in $H$, where $w_0\colonequals v$. Let $W=\{w_1, w_2, \ldots, w_{2k}\}$. Since $G$ is connected and $s_v=0$, there is a path $Q$ connecting a vertex of $X$ and a vertex of $W$. We choose such a path with minimal length, that is, $Q$ has exactly one common vertex with $X$ and exactly one common vertex with $W$. We may assume that $Q=u_0u_1\cdots u_t$ for some $t\geq 1$, where $u_0\in X$ and $u_t=w_i$ for some $1\leq i\leq 2k$.

We first show that $\ell\geq 2k+4$. Otherwise if $\ell=2k+3$, then $vv_{k+2}\notin E(G)$ by Claim~\ref{cl:B22K1-2k+3}. Thus one of the paths $v_1Pu_0$ and $u_0Pv_{\ell}$ has order at least $k+3$. In order to avoid a $P_{2k+4}$, we have $k+3+t+i\leq 2k+3$ and $k+3+t+2k-i\leq 2k+3$. Thus $k+t\leq i\leq k-t$, which is impossible. Hence, $\ell\geq 2k+4$.

Note that one of the paths $v_1Pu_0$ and $u_0Pv_{\ell}$ has order at least $\lceil(\ell+1)/2\rceil$, say $v_1Pu_0$. In order to avoid a $P_{\ell+1}$, we have $\lceil(\ell+1)/2\rceil+t+i\leq \ell$ and $\lceil(\ell+1)/2\rceil+t+2k-i\leq \ell$. Thus $2k+t-\lfloor(\ell-1)/2\rfloor\leq i\leq \lfloor(\ell-1)/2\rfloor-t$. Moreover, for any $0\leq j\leq j_0\colonequals\lceil(\ell+1)/2\rceil+t+i-2k-1$, the path $v_1Pu_0Qw_iw_{i-1}\cdots w_{j}$ has order at least $2k+1$. Note that $j_0=\lceil(\ell+1)/2\rceil+t+i-2k-1\geq \lceil(\ell+1)/2\rceil+t+2k+t-\lfloor(\ell-1)/2\rfloor-2k-1\geq 2$. In order to avoid a $B_{2, 2k+1}$, the vertex $v$ has no neighbor in $\{w_2, \ldots, w_{j_0}\}$ and has at most one neighbor in $\{w_{i+1}, \ldots, w_{2k}\}\cup (V(H)\setminus (W\cup V(Q)))$. Since $\ell\geq 2k+4$, the number of neighbors of $v$ in $H$ is at most $|\{w_1\}|+|\{w_{j_0+1}, \ldots, w_i\}|+t-1+1=i-j_0+t+1=2k+2-\lceil(\ell+1)/2\rceil\leq k-1$.
\end{proof}

Recall that for any vertex $v\in A\cup B$, we have $s_v+p_v/2 \leq k-1/2$. By Lemma~\ref{le:sumpath} and Claim~\ref{cl:B22K1-degree}, we have
\begin{align*}
e(X, V(H))+e(H)=&~e(X, A)+e(A\cup B)+e(C)+e(C, A\cup B)\\
\leq&~\sum_{v\in A\cup B}\left(s_v+\frac{p_v}{2}\right)+\sum_{v\in C}d_H(v)\\
\leq&~|A\cup B|\left(k-\frac{1}{2}\right)+|C|(k-1)\\
\leq&~\left(k-\frac{1}{2}\right)|H|.
\end{align*}
This contradiction completes the proof of Theorem~\ref{th:B22K1}.


\section{Concluding remarks}\label{sec:concluding}

In this paper, we proved that a Brualdi-Solheid-Tur\'{a}n type conjecture due to Nikiforov (Conjecture~\ref{conj:trees1}) holds for a class of spiders. In particular, we
also confirmed Conjecture~\ref{conj:trees1} for all brooms. For integers $s\geq1$ and $t\geq \ell\geq 1$, a \emph{generalized broom} $B^{\ell}_{s,t}$ is a tree on $s+t$ vertices obtained from a path $P_t$ by attaching $s$ pendant edges at the $\ell$-th vertex along the path. Note that $B^{\ell}_{s,t}=B^{t+1-\ell}_{s,t}$ and $B^1_{s,t}=B^t_{s,t}=B_{s,t}$. Using Theorem~\ref{th:spider3}, it is easy to derive the following result for generalized brooms.

\begin{corollary}\label{co:genbroom} For integers $k\geq2$, let $\mathcal{T}$ {\rm (}resp., $\mathcal{T}'${\rm)} be the set of all generalized brooms $B^{\ell}_{s,t}$ of order $2k+3$ with $s\geq 3$ {\rm (}resp., of order $2k+2$ with $s\geq 2${\rm)}. Then every graph $G$ of sufficiently large order $n$ with $\mu(G)\geq \mu(S_{n,k})$ contains all graphs in $\mathcal{T}$ and $\mathcal{T}'$.
\end{corollary}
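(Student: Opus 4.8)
The plan is to reduce Corollary~\ref{co:genbroom} to Theorem~\ref{th:spider3} and Corollary~\ref{co:spider2} by recognizing that every generalized broom is a spider and then checking that the parity hypotheses on the number of odd legs and the number of legs of length~$1$ are automatically satisfied. Concretely, the generalized broom $B^{\ell}_{s,t}$ is obtained from a path $P_t=x_1x_2\cdots x_t$ by attaching $s$ pendant edges at $x_\ell$; taking $x_\ell$ as the center, this is precisely the spider $S(t_1,\dots,t_{s+2})$ with $t_1=\cdots=t_s=1$, $t_{s+1}=\ell-1$ and $t_{s+2}=t-\ell$ (if $\ell=1$ or $\ell=t$ one of the latter two legs has length $0$ and is simply omitted, and $B^1_{s,t}=B_{s,t}$ is already covered by Theorem~\ref{th:brooms}, so we may assume $2\le\ell\le t-1$, giving exactly $s+2$ legs). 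Thus $B^{\ell}_{s,t}$ has at least $s$ legs of length~$1$.

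First I would handle the order $2k+3$ case. Here $B^{\ell}_{s,t}$ is a spider of order $2k+3$ with $s\ge 3$ legs of length~$1$; let $r$ be its total number of odd legs, so $r\ge s\ge 3$, which gives the first hypothesis $r\ge 3$ of Theorem~\ref{th:spider3}. For the second hypothesis $2s'-r\ge 2$ (writing $s'$ for the number of legs of length~$1$ to match the theorem's notation, where $s'\ge s$), note that among the two non-trivial legs of lengths $\ell-1$ and $t-\ell$, their lengths sum to $t-1$, and the total number of vertices being odd forces a parity constraint. The key observation is that $|V(S)|=2k+3$ is odd, so in the bipartition argument (exactly as used in Claim~\ref{cl:trees1}) the difference of the two side sizes is odd and at least $|r-1|$ in absolute value; combined with $r\ge 3$ this forces $r\ge 3$ and, more importantly, the evenness considerations pin down the relationship between $r$ and the leg lengths. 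Rather than rely on that, the cleaner route: the legs of length~$1$ number $s'\ge 3$; the remaining legs are the two of lengths $\ell-1,t-\ell$ (one or both possibly even). So $r\in\{s',s'+1,s'+2\}$ depending on how many of $\ell-1,t-\ell$ are odd, and in every case $2s'-r\ge 2s'-(s'+2)=s'-2\ge 1$. To get $\ge 2$ we use that $B^{\ell}_{s,t}$ has order $2k+3$ odd: the number of odd legs $r$ has the same parity as... one checks the spider order $1+\sum t_i$ is odd iff $\sum t_i$ is even iff the number of odd legs $r$ is even. Hence $r$ is even, so $r\ne s'+1$ when $s'$ is even and $r\ne s',s'+2$ when $s'$ is odd — in all cases $2s'-r$ is even and $\ge s'-2\ge 1$, hence $\ge 2$. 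Then Theorem~\ref{th:spider3} applies and $G$ contains $B^{\ell}_{s,t}$.

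The order $2k+2$ case is entirely analogous using Corollary~\ref{co:spider2}: now $s'\ge s\ge 2$, $r\ge s'\ge 2$ gives the first hypothesis; the order $2k+2$ is even, so $\sum t_i$ is odd, so $r$ is odd; again $r\in\{s',s'+1,s'+2\}$ and $2s'-r\ge s'-2\ge 0$, while the parity of $2s'-r$ is odd, forcing $2s'-r\ge 1$. Hence Corollary~\ref{co:spider2} applies. Since $\mathcal T$ and $\mathcal T'$ are finite sets and the bound is uniform in the (finitely many) trees, ``sufficiently large $n$'' can be taken to work for all of them simultaneously. I do not anticipate a genuine obstacle here — the entire content is the bookkeeping of parities; the only mild subtlety is the degenerate cases $\ell\in\{1,t\}$ (where $B^{\ell}_{s,t}=B_{s,t}$, already done) and small $s$, but these are dispatched by the hypotheses $s\ge 3$ and $s\ge 2$ in the statement, so the argument is clean.
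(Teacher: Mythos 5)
Your proposal is correct, but your route differs from the paper's in two respects and has one small slip worth flagging. The paper verifies the hypothesis $2s-r\ge 2$ of Theorem~\ref{th:spider3} by splitting into cases: for $s\ge 4$ the bound $2s-r\ge 2s-(s+2)=s-2\ge 2$ is immediate, while for $s=3$ the paper invokes the parity of $2k+3$ to pin $r=s+1$. You instead run the parity argument uniformly for all $s\ge 3$, observing that $r$ is even, $2s'-r$ is even, and $2s'-r\ge s'-2\ge 1$, hence $2s'-r\ge 2$; this is a cleaner, single-shot version of the same idea, and you are also more careful than the paper in distinguishing the number $s'$ of length-1 legs from the broom parameter $s$ (the paper silently uses $s'\ge s$). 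For the set $\mathcal T'$ the paper argues by embedding $B^{\ell}_{s,2k+2-s}$ as a subgraph of $B^{\ell}_{s+1,2k+2-s}\in\mathcal T$, whereas you rerun the parity bookkeeping to verify the hypotheses of Corollary~\ref{co:spider2} directly ($r$ odd, $2s'-r\ge 0$ and odd, so $\ge 1$). Both routes are valid and of comparable length.

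The one genuine slip is your remark that the degenerate cases $\ell\in\{1,t\}$ (i.e.\ $B^{\ell}_{s,t}=B_{s,t}$) are ``already covered by Theorem~\ref{th:brooms}.'' For order $2k+3$ that theorem assumes $\mu(G)\ge\mu(S_{n,k}^{+})$, which is \emph{stronger} than the corollary's hypothesis $\mu(G)\ge\mu(S_{n,k})$, so you cannot defer to it. Fortunately this deferral is unnecessary: your own parity argument works verbatim when one of $\ell-1,t-\ell$ equals $0$ (the spider simply has $s+1$ legs instead of $s+2$, still with $s'\ge s$ length-1 legs and $r\le s'+1\le s'+2$), so you should just drop the appeal to Theorem~\ref{th:brooms} and let the spider argument handle $\ell=1$ and $\ell=t$ as well, exactly as the paper does.
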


\begin{proof} For $s\geq 3$, the generalized broom $B^{\ell}_{s,t}$ can be viewed as a spider $S(t_1, t_2, \ldots, t_{s+2})$, where $t_1=\cdots=t_{s}=1$, $t_{s+1}=\ell-1$ and $t_{s+2}=t-\ell$. Such a spider has at least $s$ legs of length 1. Let $r$ be the number of odd legs. So $s\leq r\leq s+2$. Since $s\geq 3$, we have $r\geq 3$. If $s\geq 4$, then $2s-r\geq 2s-(s+2)\geq 2$. If $s=3$, then $r=s+1$ since $2k+3$ is odd, so we also have $2s-r\geq 2$. By Theorem~\ref{th:spider3}, $G$ contains all graphs in $\mathcal{T}$. Since every graph $B^{\ell}_{s,2k+2-s}$ ($s\geq2$) is a subgraph of $B^{\ell}_{s+1,2k+3-(s+1)}$, we can further deduce that $G$ contains all graphs in $\mathcal{T}'$.
\end{proof}

An interesting and natural question is to study Conjecture~\ref{conj:trees1} for generalized brooms $B^{\ell}_{s,t}$ of order $2k+3$ with $s\leq 2$ (resp., of order $2k+2$ with $s=1$) and $2\leq \ell\leq t-1$. Another direction is to study Conjecture~\ref{conj:trees1} for other classes of spiders. Hopefully this will also lead to new ideas and approaches for resolving Conjecture~\ref{conj:trees1} for general trees.

\end{document}